\theoremstyle{plain}
\newtheorem{theorem}{Theorem}[section]
\newtheorem*{theorem*}{Theorem}
\newtheorem{lemma}[theorem]{Lemma}
\newtheorem{proposition}[theorem]{Proposition}
\theoremstyle{remark}
\newtheorem{definition}[theorem]{Definition}
\newtheorem{remark}[theorem]{Remark}
\numberwithin{equation}{section}
\newcommand\quant{\advance\quantno by1
                      \ifnum\quantno=1\qquad\else\quad\fi\forall }
\newcommand\itemno[1]{(\romannumeral #1)}
\renewcommand\Re{\operatorname{\mathrm{Re}}}
\newcommand\Var{\operatorname{\rm{Var}}}
\newcommand\rest[1]{\kern-.1em
          \lower.5ex\hbox{$\scriptstyle #1$}\kern.05em}
\newcommand\set[1]{{\left\{#1\right\}}}
\renewcommand\mod[1]{\left\vert{#1}\right\vert}
\newcommand\bigmod[1]{\bigl\vert{#1}\bigr|}
\newcommand\norm[2]{{\Vert{#1}\Vert_{#2}}}
\newcommand\bignorm[2]{{\bigl\Vert{#1}\bigr\Vert_{#2}}}
\newcommand\opnorm[2]{|\!|\!| {#1} |\!|\!|_{#2}}
\newcommand\smallfrac[2]{\mbox{\small$\displaystyle\frac{#1}{#2}$}}
\newcommand\wrt{\,\text{\rm d}}
\newcommand\1{{\bf 1}}
\newcommand\BC{\mathbb{C}}
\newcommand\BN{\mathbb{N}}
\newcommand\BR{\mathbb{R}} \newcommand\BRd{\mathbb{R}^d}
\newcommand\BZ{\mathbb{Z}}
\newcommand\cB{\mathcal{B}}   
\newcommand\cC{\mathcal{C}}
\newcommand\cL{\mathcal{L}}   
\newcommand\cM{\mathcal{M}}  \newcommand\fM{\mathfrak{M}}
\newcommand\cQ{\mathcal{Q}}
\newcommand\cT{\mathcal{T}}
  \newcommand\fZ{\mathfrak{Z}}
\newcommand\al{\alpha}
\newcommand\be{\beta}
\newcommand\ga{\gamma}    
\newcommand\de{\delta}
  \newcommand\vep{\varepsilon}
\newcommand\la{\lambda}   
\newcommand\om{\omega}      
\newcommand\si{\sigma}
\newcommand\te{\theta}
\newcommand\funnyk{k\hbox to 0pt{\hss\phantom{g}}}
\newcommand\lu[1]{L^1(#1)}
\newcommand\lp[1]{L^p(#1)}
\newcommand\ld[1]{L^2(#1)}
\newcommand\ldO[1]{L^2_0(#1)}
\newcommand\ly[1]{L^\infty(#1)}
\newcommand\hu[1]{H^1(#1)}
\newcommand\whH{\widehat{\phantom{G}}\hbox to 0pt{\hss $H$}}
\newcommand\emspace{\hbox to 6pt{\hss}}
\newcommand\ds{\displaystyle}
\newcommand\rmi{\hbox{\rm (i)}}
\newcommand\rmii{\hbox{\rm (ii)}}
\newcommand\rmiii{\hbox{\rm (iii)}}
\newcommand\rmiv{\hbox{\rm (iv)}}
\newcommand\rmv{\hbox{\rm (v)}}
\newcommand\ioty{\int_0^{\infty}}
\newcommand\One{{\mathbf{1}}}
\newcommand\e{\mathrm{e}}
\newcommand\diam[1]{\mathrm{diam}#1}
\newcommand\EB{{\rm{(AM)}}}
\newcommand\PP{{\rm{(I)}}}
\newcommand\IcM{I_{M,B_0}^c}
\newcommand\cIP{{\rm (I$\!\phantom{.}^c_{\!B_0}$)}}
\newcommand\supp{\mathrm{supp}\,}
\begin{document}

\title[$H^1$, $BMO$ for locally doubling spaces]
{$H^1$ and $BMO$ for certain locally doubling \\  metric measure spaces of
finite measure}

\subjclass[2000]{42B20, 42B30, 46B70, 58C99 }

\keywords{atomic Hardy space, BMO, singular integrals, Riemannian manifolds.}

\thanks{Work partially supported by the
Progetto Cofinanziato ``Analisi Armonica''.}

\author[A. Carbonaro, G. Mauceri and S. Meda]
{Andrea Carbonaro, Giancarlo Mauceri and Stefano Meda}

\address{A. carbonaro, G. Mauceri: Dipartimento di Matematica \\
Universit\`a di Genova \\ via Dodecaneso 35, 16146 Genova \\ Italia}
\address{S. Meda: Dipartimento di Matematica e Applicazioni
\\ Universit\`a di Milano-Bicocca\\
via R.~Cozzi 53\\ 20125 Milano\\ Italy}

\begin{abstract}
In a previous paper the authors developed a $H^1-BMO$ theory for 
unbounded metric measure spaces $(M,\rho,\mu)$ of infinite measure that 
are locally doubling and satisfy two geometric properties, called 
``approximate midpoint" property and ``isoperimetric" property. In this 
paper we develop a similar theory for spaces of finite measure. 
We prove that all the results that hold in the infinite measure case have their 
counterparts in the finite measure case.
Finally, we show that the theory applies to a class of unbounded, 
complete Riemannian manifolds of finite measure and to a 
class of metric measure spaces of the form 
$(\BR^d,\rho_\varphi, \mu_\varphi)$, where 
$\wrt\mu_\varphi=\e^{-\varphi}\wrt x$ and $\rho_\varphi$ is the 
Riemannian metric corresponding to the length element 
$\wrt s^2=(1+\mod{\nabla\varphi})^2(\wrt x_1^2+\cdots+\wrt x_d^2)$.
This generalizes previous work of the last 
two authors for the Gauss space.
\end{abstract}

\maketitle

\setcounter{section}{0}
\section{Introduction} \label{s:Introduction}

In \cite{CMM} the authors developed a $H^1-BMO$ theory on  unbounded 
metric measure spaces $(M,\rho,\mu)$ that are locally doubling and 
satisfy two additional  ``geometric" properties, called  \emph{approximate 
midpoint}   (AM)  property and \emph{isoperimetric} (I) property. Roughly 
speaking, a space satisfies (AM) if its points do not become  too sparse 
at infinity and satisfies (I) if a fixed ratio of the measure of any bounded set 
is concentrated near the boundary.\par
For each scale parameter $b$ in $\BR^+$, we defined the spaces $H^1_b(\mu)$ and 
$BMO_b(\mu)$ much as in the classical case of spaces of homogeneous 
type, in the sense of Coifman and Weiss  \cite{CW}, the only difference 
being that the balls involved have at most radius $b$. Then we showed 
that these spaces do not depend on the scale $b$, at least if $b$ is 
sufficiently large, and that all the classical results that hold on spaces of 
homogeneous type, such as a John-Nirenberg inequality, the $H^1(\mu)-
BMO(\mu)$ duality, complex interpolation, hold for these spaces. 
Moreover these spaces provide end-point estimates for some interesting 
singular integrals which arise in various settings. We also showed that the 
theory applies to noncompact complete Riemannian manifolds with Ricci 
curvature bounded from below and strictly positive spectrum, e.g. to 
noncompact Riemannian symmetric spaces.\par
In \cite{CMM} we focused on the case where $\mu(M) = \infty$. In this 
paper we tackle the case where $\mu(M) <\infty$. In this case we must  
modify slightly the isoperimetric property, by assuming  that, instead of of 
(I), $M$ satisfies the \emph{complementary isoperimetric} property {\cIP}.  Roughly speaking, $M$ satisfies property {\cIP} if  
there exists a ball $B_0$ such that
a fixed ratio of the measure of any open set contained
in $M\setminus \Bar B_0$
is concentrated near the boundary of the set.  \par
When $\mu(M)$ is finite, the definitions of  the atomic Hardy space $\hu
{\mu}$
and the space $BMO({\mu})$ of functions of bounded mean oscillation  
are quite similar to those of the corresponding spaces
in the infinite measure case considered in \cite{CMM}. 
\par
To be specific, for each $b$ in $\BR^+$ denote by $\cB_b$ the collection
of balls of radius at most $b$.  The constant $b$ may be thought of as
a ``scale parameter'',
and the balls in $\cB_b$ are called \emph{admissible balls
at the scale $b$}.
An atom $a$ is either the exceptional atom $1/\mu(M)$
or a function in $\lu{\mu}$ supported in 
a ball $B$ which satisfies an appropriate ``size''
and cancellation condition.  
Fix a sufficiently large  ``scale parameter'' $b$ in $\BR^+$
(how large depends on 
the constants that appear in the definition of the (AM)
property).
Then $\hu{\mu}$ is the space of
all functions in $\lu{\mu}$ that admit a decomposition
of the form $\sum_j \la_j \, a_j$, where the $a_j$'s
are atoms supported in balls in $\cB_b$ or the exceptional atom,
and the sequence of complex numbers $\{\la_j\}$ is summable.

A locally integrable function $f$ is in $BMO(\mu)$ if 
it is in $\lu{\mu}$ and
$$
\sup_B \smallfrac{1}{\mu(B)} \int_B \mod{f-f_B} \wrt \mu < \infty,
$$
where the supremum is taken over \emph{all} balls $B$
in $\cB_b$, and $f_B$ denotes the average of $f$ over~$B$.
This definition of $BMO(\mu)$ is inspired by previous work of A.~Ionescu \cite{I}, who defined a similar space on rank one noncompact symmetic spaces.\par
We prove that these spaces indeed do not depend on the parameter $b$, 
that the topological dual of $H^1({\mu})$ is isomorphic to $BMO({\mu})$ 
and
an inequality of John--Nirenberg type holds for functions in $BMO(\mu)$. 
Furthermore, the spaces $\lp{\mu}$ are intermediate
spaces between $H^1(\mu)$ and $BMO(\mu)$ for the
complex interpolation methods.
It is worth observing that some important operators, which are bounded
on $\lp{\mu}$ for all $p$ in $(1,\infty)$, but otherwise unbounded
on $\lu{\mu}$ and on $\ly{\mu}$, turn out to be bounded
from $\hu{\mu}$ to $\lu{\mu}$ and from $\ly{\mu}$ to $BMO({\mu})$.

Some of the proofs  of these results require only simple adaptations
of the proofs of the analogous results in \cite{CMM}. In these cases we  
shall briefly 
indicate the variations needed. Other proofs,
like those of the duality and the interpolation results, require
more substantial changes, and we give full details.

In Section~\ref{s: Cheeger} we show that our theory applies to      
unbounded complete Riemannian manifolds $M$ of finite volume with 
Ricci curvature bounded from below 
such that Cheeger's isoperimetric constant 
$h(M)$ is strictly positive. It is well known that, on such manifolds, 
Cheeger's constant is strictly positive if and only if the Laplace--Beltrami
operator  $\cL$ on $M$ has spectral gap, i.e. if and only if $0$ is an 
isolated eigenvalue of $\cL$ on $L^2(\mu)$.
\par
In \cite{MM} G. Mauceri and S. Meda defined an atomic Hardy space
$\hu{\ga}$ and a space $BMO(\ga)$ of functions
of bounded mean oscillation associated to the Gauss measure $\wrt\ga(x)=\e^{-\mod{x}^2}\wrt x$ on $
\BR^d$. 
We recall briefly the definitions of these spaces.
For each scale parameter $b$ we denote
by $\cB_b^\ga$ the set of all Euclidean balls $B$ in $\BR^d$
such that 
$$
r_B \leq b \, \min\bigl(1,1/\mod{c_B}\bigr),
$$
where $c_B$ and $r_B$ denote the centre and the radius of $B$
respectively.  Now, $\hu{\ga}$ is defined as $\hu{\mu}$
above, but with the family of admissible balls $\cB_b$
replaced by $\cB_b^\ga$, and similarly for $BMO(\ga)$.
In \cite{MM} the authors proved that $\hu{\ga}$  
and $BMO(\ga)$ possess the analogues of the properties
enumerated above for $\hu{\mu}$ and $BMO(\mu)$.
They also showed that some important operators related to the 
Ornstein--Uhlenbeck operator on $\BR^d$ that are bounded
on $\lp{\ga}$ for all $p$ in $(1,\infty)$, but otherwise unbounded
on $\lu{\ga}$ and on $\ly{\ga}$, are be bounded
from $\hu{\ga}$ to $\lu{\ga}$ and from $\ly{\ga}$ to $BMO({\ga})$.
\par
It may be worth observing that the measured metric space $(\BR^d,
\rho, \ga)$, where $\rho$ denotes the Euclidean distance, has finite 
measure and is not locally doubling.
\par
The definition of the class $\cB_b^\gamma$ of admissible balls in \cite
{MM} suggests that on the Gauss space $(\BR^d,\rho,\gamma)$ the 
Euclidean metric $\rho$ should be replaced by the Riemannian metric 
associated to the length element $\wrt s^2=(1+\mod{x})^2(\wrt x_1^2+
\cdots+\wrt x^2_d)$. 

\par
In Section \ref{s: Another} we exploit and generalize this idea, by considering metric 
measure spaces of the form $(\BR^d,
\rho_\varphi,\mu_\varphi)$ where $\varphi$ is a function in
$C^2(\BR^d)$, $\rho_\varphi$ is the Riemannian metric on $\BR^d$ 
defined by
the length element $\wrt s^2=(1+\mod{\nabla \varphi})^2\,(\wrt
x_1^2+\cdots+\wrt x_d^2)$ and
$\wrt\mu_\varphi=\e^\varphi\wrt\lambda$, where $\lambda$ is the 
Lebesgue measure on $\BR^d$. We prove that, if the function $\varphi$ 
satisfies appropriate conditions, the space $(\BR^d,
\rho_\varphi,\mu_\varphi)$ is locally doubling and satisfies properties 
(AM) and~{\cIP}.\par
Finally, we recall that Hardy spaces and spaces of functions of bounded
mean oscillation have recently
been studied on various nondoubling metric measure spaces
\cite{MMNO, NTV, To, V}. We point out that our spaces are different and 
that they provide end-point estimates for singular integrals which do not 
satisfy the standard Calder\'on-Zygmund estimates at infinity, still 
mantaining the important property that the complex interpolation spaces 
between $H^1(\mu)$ and  $BMO(\mu)$ are the spaces $L^p(\mu)
$.

\section{Geometric assumptions} \label{s: PPc}

Suppose that $(M,\rho,\mu)$ is a metric measure space and denote
by $\cB$ the family of all balls in $M$. We assume that
$0<\mu(M)<\infty$. For each
$B$ in $\cB$ we denote by $c_B$ and $r_B$ the centre and the radius
of $B$ respectively. Furthermore, for each $\kappa>0,$ we denote by
$\kappa \, B$ the ball with centre $c_B$ and radius $\kappa \, r_B$.
For each $b$ in $\BR^+$, we denote by $\cB_b$ the family of all
balls $B$ in $\cB$ such that $r_B \leq b$.
For any subset $A$ of $M$ and each $\kappa$ in $\BR^+$
we denote by $A_{\kappa}$ and $A^{\kappa}$ the sets
$$
\bigl\{x\in A: \rho(x,A^c) \leq \kappa\bigr\}
\qquad\hbox{and}\qquad
\bigl\{x\in A: \rho(x,A^c) > \kappa\bigr\}
$$
respectively.

In this paper we assume that $(M,\rho,\mu)$ is an
unbounded measured metric space of \emph{finite measure},
which possesses the following properties:
\begin{enumerate}
\item[\itemno1]
\emph{local doubling property} (LD):
for every $b$ in $\BR^+$ there exists a constant $D_b$
such that
$$
\mu \bigl(2 B\bigr)
\leq D_b \, \mu  \bigl(B\bigr)
\quant B \in \cB_b;  
$$
This property is often called \emph{local doubling condition}
in the literature, and we adhere to this terminology.
Note that if (LD) holds and $M$ is bounded, then $\mu$ is doubling.
\item[\itemno2]
\emph{property} \hbox{\EB} (approximate midpoint property): there
exist $R_0$ in $[0,\infty)$ and $\be$ in $(1/2,1)$ such that for
every pair of points~$x$ and $y$ in $M$ with $\rho(x,y) > R_0$ there
exists a point $z$ in $M$ such that $\rho(x,z)<\beta\,\rho(x,y)$ and $\rho
(y,z)<\beta\,\rho(x,y)$. 
\item[\itemno3]
\emph{complementary isoperimetric property} \hbox{{\cIP}}:
there exist a ball $B_0$ in $M$, 
$\kappa_0$ and $C$ in $\BR^+$ such that
for every open set $A$ contained in $M\setminus \Bar B_0$
\begin{equation}\label{PIc}
\mu \bigl(A_{\kappa}\bigr)
\geq C\, \kappa \, \mu (A) 
\quant \kappa \in (0,\kappa_0]. 
\end{equation}
Suppose that $M$ has property~{\cIP}.  For each $t$ in $(0,\kappa_0]$
we denote by $C_{t}$
the supremum over all constants $C$ for which (\ref{PIc}) holds for all 
$\kappa$ in $(0,t]$.  Then we define ${\IcM}$ by
$$
{\IcM} = \sup \bigl\{ C_{t}: t\in (0,\kappa_0] \bigr\}.
$$
Note that the function $t\mapsto C_t$ is decreasing
on $(0,\kappa_0]$, so that
\begin{equation}\label{IcM}
{\IcM} = \lim_{t\to 0^+} C_{t}.
\end{equation}
\end{enumerate}
\begin{remark}\label{r: compare}
The first two geometric assumptions (LD) and (AM) coincide with the 
corresponding assumptions made in \cite{CMM} for  spaces of infinite 
measure. The isoperimetric property is sligthly different from the 
isoperimetric property (I) in \cite{CMM}, because in the infinite measure 
case  we assumed that inequality (\ref{PIc}) holds for all  bounded open 
set in $M$.
\end{remark}
\begin{remark} \label{r: geom I}
The local doubling property implies that for each $\tau \geq 2$
and for each $b$ in $\BR^+$
there exists a constant~$C$ such that
\begin{equation} \label{f: doubling D}
\mu\bigl(B'\bigr)
\leq C \, \mu(B)
\end{equation}
for each pair of balls $B$ and $B'$, with $B\subset B'$,
$B$ in $\cB_b$, and $r_{B'}\leq \tau \, r_B$.
We shall denote by $D_{\tau,b}$ the smallest constant for
which (\ref{f: doubling D}) holds.
In particular, if (\ref{f: doubling D}) holds (with the same constant)
for all balls $B$ in $\cB$, then $\mu$ is doubling and
we shall denote by $D_{\tau,\infty}$ the smallest constant for
which (\ref{f: doubling D}) holds.
\end{remark}
\begin{remark}\label{r: AMP}
Loosely speaking, the approximate midpoint property means that the 
points of $M$ ``do not become to sparse at infinity". The properties is 
obviously satisfied on all length metric spaces.

\end{remark}
\begin{remark}\label{r: Ic}
In Section \ref{s: Cheeger} we shall see that, on complete Riemannian manifolds,  the 
complementary isoperimetric property is  equivalent to the positivity of 
Cheeger's isoperimetric costant
$$
h(M)= \inf  \frac{\sigma(\partial(A)}{\mu(A)}
$$
where the infimum runs over all bounded open sets $A$ with $\mu(A)\le 
\mu(M)/2$ and with smooth boundary  $\partial(A)$. Here $\sigma$ 
denotes the induced Riemannian measure on $\partial A$. Moreover, if the 
Ricci curvature of $M$ is bounded from below, both properties are 
equivalent to the existence of a spectral gap for the Laplacian.
\end{remark}
\begin{remark}
The local doubling property is needed for all the results in this
paper, but many results in Sections~\ref{s: PPc}-\ref{s:
interpolation} depend only on some but not all the properties
\rmi-\rmiii. In particular,  all the results in
Sections~\ref{s: H1 and BMO} and \ref{s: duality} require property
\EB\ but not property {\cIP}; 
Lemma~\ref{l: rdi II} and Theorem~\ref{t: basic II}, which are key
in proving the interpolation result Theorem~\ref{t: interpolation},
require property {\cIP}, but not property \EB. Finally, all the
properties \rmi-\rmiii\ above are needed for the interpolation
results and the theory of singular integral operators in
Section~\ref{s: interpolation}.
\end{remark}
\begin{proposition} \label{p: PPc}
Suppose that $M$ possesses property {\cIP}. The following hold:
\begin{enumerate}
\item[\itemno1]
for every open set $A$ contained in $M\setminus \Bar{B}_0$
$$
\mu(A_t) \geq \bigl(1-\e^{-{\IcM}t }\bigr) \, \mu(A) \quant t \in
\BR^+;
$$
\item[\itemno2]
For every point $x$ in $M$ there exists a constant $C$, which depends on 
$x$, $\IcM$ and $B_0$, such that
$$
\mu\big(B(x,r)^c\big)\le C\,\e^{-{\IcM}r} \qquad\forall r>0.
$$ 
\end{enumerate}
\end{proposition}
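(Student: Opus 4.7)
The plan is to prove \itemno1 by iterating property~{\cIP} on the nested open ``inner cores'' $A^s$, and then to derive \itemno2 by applying \itemno1 to the open exteriors $U_r:=\{y\in M:\rho(y,x)>r\}$ of metric balls centred at~$x$ (once $r$ is large enough to push them outside $\bar{B}_0$).

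For \itemno1 I first observe that $A^s$ is open (a standard consequence of the $1$-Lipschitz continuity of $\rho(\cdot,A^c)$) and satisfies $A^s\subseteq A\subseteq M\setminus\bar{B}_0$, so property~{\cIP} applies to it. The crucial geometric input is the inclusion
$$
A^{s+\kappa}\subseteq (A^s)^\kappa,\qquad s,\kappa\geq 0.
$$
To see this, note that for every $z\in(A^s)^c$ one has $\rho(z,A^c)\leq s$ (either $z\in A^c$, or $z\in A$ with $\rho(z,A^c)\leq s$ by definition of $A^s$); hence for $y\in A^{s+\kappa}$ the triangle inequality $\rho(y,A^c)\leq\rho(y,z)+\rho(z,A^c)$ gives $\rho(y,z)\geq\rho(y,A^c)-s>\kappa$, and taking the infimum over $z$ yields $\rho(y,(A^s)^c)>\kappa$.

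Now fix $C<{\IcM}$ and choose $t_0\in(0,\kappa_0]$ with $C_{t_0}\geq C$, which is possible because ${\IcM}=\lim_{t\to 0^+}C_t$. Applying~{\cIP} to $A^s$ for $\kappa\in(0,t_0]$ gives $\mu((A^s)^\kappa)\leq(1-C\kappa)\mu(A^s)$, and combining with the inclusion above upgrades this to the fundamental recursion $\mu(A^{s+\kappa})\leq(1-C\kappa)\mu(A^s)$. Iterating $n$ times yields $\mu(A^{n\kappa})\leq(1-C\kappa)^n\mu(A)$; given $t>0$, choose $n$ so large that $\kappa:=t/n\leq t_0$ and let $n\to\infty$ to obtain $\mu(A^t)\leq\e^{-Ct}\mu(A)$. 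Letting $C\nearrow{\IcM}$ finally produces $\mu(A^t)\leq\e^{-{\IcM}\,t}\mu(A)$, equivalent to \itemno1 since $A$ is the disjoint union of $A_t$ and $A^t$.

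For \itemno2, fix $x\in M$ and pick $r_0\geq r_{B_0}+\rho(c_{B_0},x)$, so that $U_r$ is an open subset of $M\setminus\bar{B}_0$ for every $r\geq r_0$. A triangle-inequality argument parallel to the one above yields $U_{r+t}\subseteq(U_r)^t$: if $\rho(y,x)>r+t$ and $z\in(U_r)^c=\bar{B}(x,r)$, then $\rho(y,z)\geq\rho(y,x)-\rho(z,x)>t$. Applying \itemno1 to $U_r$ therefore gives $\mu(U_{r+t})\leq\e^{-{\IcM}\,t}\mu(U_r)$; setting $r=r_0$ yields $\mu(U_s)\leq\mu(M)\e^{{\IcM}\,r_0}\e^{-{\IcM}\,s}$ for $s\geq r_0$, while for $s<r_0$ the trivial bound $\mu(U_s)\leq\mu(M)$ is dominated by the same quantity. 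Since $B(x,s)^c\subseteq U_{s-\varepsilon}$ for every $\varepsilon\in(0,s)$, letting $\varepsilon\to 0^+$ delivers the required estimate with $C:=\mu(M)\e^{{\IcM}\,r_0}$. The only substantive subtlety I anticipate is the double limit in \itemno1 (first $n\to\infty$ at fixed $C<{\IcM}$, then $C\nearrow{\IcM}$) together with the careful verification of the two set inclusions via the triangle inequality; the remainder is routine bookkeeping.
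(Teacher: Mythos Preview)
Your proof is correct and follows essentially the same strategy as the paper. For \rmi\ the paper simply refers to \cite[Proposition~3.1]{CMM}, and your iteration on the inner cores $A^s$ via the inclusion $A^{s+\kappa}\subseteq (A^s)^\kappa$ is precisely that argument; for \rmii\ the paper likewise applies \rmi\ to complements of balls, but does so by a discrete iteration $V_r\le \e^{-\IcM}V_{r-1}$ in unit steps, whereas you apply the continuous estimate of \rmi\ once to the open exterior $U_{r_0}$. Your variant is marginally cleaner, since working with the genuinely open sets $U_r=\{y:\rho(y,x)>r\}$ sidesteps the minor issue that $B(x,r-1)^c$ is closed and hence not literally of the form required in \rmi.
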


\begin{proof}
The proof of \rmi\ is almost \emph{verbatim} the same as the proof
of \cite[Proposition~3.1]{CMM}, and is omitted.

Now we prove \rmii. Denote by $V_r$ the measure of $B(x,r)^c$. Since $
\mu\big(B(x,r)^c\big)\le \mu(M)$ for every $r>0$, it is clearly enough to 
prove the inequality for $r$ sufficiently large, say $r>r_{B_0}+d(x,c_{B_0})
+1$. Then $B(x,r-1)^c\subset M\setminus \Bar{B}_0$ and $B(x,r-1)^c
\setminus B(x,r)^c\supseteq \big(B(x,r-1)^c\big)_1$. Thus, by \rmi
\begin{align*}
V_{r-1}-V_r\ge&\mu\left(\big(B(x,r-1)^c)_1\right) \\ 
\ge&(1-\e^{-{\IcM}})\, V_{r-1}. 
\end{align*}
Hence
$$
V_r\le \e^{-{\IcM}}\  V_{r-1}.
$$
By iteration, if $r_{B_0}+d(x,c_{B_0})+n<r\le r_{B_0}+d(x,c_{B_0})+n+1$ 
we obtain that
$$
V_r\le \e^{-{\IcM}n}\  V_{r-n}\le C\ \e^{-{\IcM}r},
$$
where $C=\exp\big((r_{B_0}+d(x,c_{B_0})+1){\IcM}\big)\,\mu(M)$.
\end{proof}

\section{$H^1$ and $BMO$} \label{s: H1 and BMO}

In this section we define the Hardy space $H^1(\mu)$ and the space 
$BMO(\mu)$. The definitions are very similar to those given in \cite{CMM} 
for metric spaces of infinite measure. The only differences are the 
existence of the ``exceptional atom"  in $H^1(\mu)$ and the fact that 
$BMO(\mu)$ is defined as a subspace of $L^1(\mu)$.\par
\begin{definition} \label{d: standard atom}
Suppose that $r$ is in $(1,\infty]$.  A $(1,r)$-\emph{standard atom} $a$
is a function in $\lu{\mu}$ supported in a ball $B$ in $\cB$
with the following properties:
\begin{enumerate}
\item[\itemno1]
$\norm{a}{\infty}  \leq \mu (B)^{-1}$
if $r$ is equal to $\infty$ and
$$
\Bigl(\frac{1}{\mu (B)} \int_B \mod{a}^r \wrt{\mu } \Bigr)^{1/r}
\leq \mu (B)^{-1}
$$
if $r$ is in $(1,\infty)$;
\item[\itemno2]
$\ds \int_B a \wrt \mu  = 0$.
\end{enumerate}
The constant function $1/\mu(M)$ is referred to as the
\emph{exceptional atom}.
\end{definition}

\begin{definition} \label{d: Hardy}
Suppose that $b$ is in $\BR^+$ and that $r$ is in $(1,\infty]$.
The \emph{Hardy space} $H_b^{1,r}({\mu})$ is the
space of all functions~$g$ in $\lu{\mu}$
that admit a decomposition of the form
\begin{equation} \label{f: decomposition}
g = \sum_{k=1}^\infty \la_k \, a_k,
\end{equation}
where $a_k$ is either a $(1,r)$-atom \emph{supported in a ball $B$ of $
\cB_b$}
or the exceptional atom,
and $\sum_{k=1}^\infty \mod{\la_k} < \infty$.
The norm $\norm{g}{H_b^{1,r}({\mu})}$
of $g$ is the infimum of $\sum_{k=1}^\infty \mod{\la_k}$
over all decompositions (\ref{f: decomposition})
of $g$.
\end{definition}

\begin{definition} \label{d: BMO II}
Suppose that $b$ is in $\BR^+$ and that $q$ is in $[1,\infty)$.
For each locally integrable function~$f$ we define $N_b^q(f)$ by
$$
N_b^q(f)
=  \sup_{B\in\cB_b} \Bigl(\frac{1}{\mu(B)}
\int_B \mod{f-f_B}^q \wrt\mu \Bigr)^{1/q},
$$
where $f_B$ denotes the average of $f$ over $B$. We denote by
$BMO_b^q(\mu)$ the space of all  functions~$f$ in $L^1(\mu)$ such that
$N^q_b(f)$ is finite, endowed with the norm
$$
\norm{f}{BMO_b^q(\mu)}
= \norm{f}{1} + N_b^q(f).
$$
\end{definition}

Note that only balls of radius at most $b$ enter in the definitions
of $H_b^{1,r}(\mu)$ and $BMO_b^q(\mu)$.

It is a nontrivial fact that $H_b^{1,r}(\mu)$ and~$BMO_b^q(\mu)$ are
independent of the parameter $b$, provided $b$ is large enough.
Recall that $R_0$ and $\be$ are the constants which appear in the
definition of the \EB\ property.

\begin{proposition}\label{p: decphi}
Suppose that 
$r$ is in $(1,\infty]$, $q$ is in $[1,\infty)$, and
$b$ and $c$ are in $\BR^+$ and satisfy $R_0/(1-\beta)<c<b$.
The following hold:
\begin{enumerate}
\item[\itemno1]
the identity is a Banach space isomorphism between
$H_{c}^{1,r}(\mu)$ and $H_{b}^{1,r}(\mu)$ and
between
$BMO_b^q(\mu)$ and $BMO_c^q(\mu)$;
\item[\itemno2]
(John-Nirenberg type inequality)
there exist positive constants $c$ and $C$ such that
for all $f \in BMO_b^1(\mu)$ and all $B$ in $\cB_b$
$$
\mu\bigl(\{x\in B: \mod{f(x)-f_B} >s \} \bigr)
\leq C \, \e^{-c \,s/N_b^1(f) } \, \mu(B)
;
$$
\item[\itemno3]
for each $q$ in $(1,\infty)$ there exists a constant $C$ such that
$$
N_b^1(f) \leq  N_b^q(f) \leq C\, N_b^1(f) \quant f \in BMO^q_b(\mu).
$$
\end{enumerate}
\end{proposition}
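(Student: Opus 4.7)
The plan is to follow closely the template of the analogous results in \cite{CMM} for spaces of infinite measure, observing that all three statements are local in nature and depend only on (LD) and \EB; the complementary isoperimetric property {\cIP} plays no role here. The modifications arising from $\mu(M)<\infty$, namely the exceptional atom in $H^1(\mu)$ and the $L^1$ constraint in $BMO(\mu)$, do not affect the scale-invariance arguments because the latter only reshuffle standard atoms among balls in $\cB_b$, leaving the exceptional atom untouched.

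For \rmi, the inclusions $H_c^{1,r}(\mu)\hookrightarrow H_b^{1,r}(\mu)$ and $BMO^q_b(\mu)\hookrightarrow BMO^q_c(\mu)$ with norm at most one are immediate from $\cB_c\subset\cB_b$. The content is in the converse direction. The key preliminary step I would prove is a covering lemma: since $c>R_0/(1-\be)$, iterating the midpoint construction of \EB\ shows that every ball $B$ with $c<r_B\le b$ can be covered by a family of balls in $\cB_c$ whose cardinality and multiplicity of overlap are bounded by constants depending only on $b$, $c$, $\be$, and the doubling constant $D_b$; here (LD) is what converts a metric covering into one with bounded overlap. Given this, to prove $H_b^{1,r}(\mu)\subset H_c^{1,r}(\mu)$ I would split a $(1,r)$-atom $a$ supported on such a $B$ into pieces supported on the sub-balls; each piece fails cancellation, but this can be corrected by redistributing mean values along a spanning tree on the covering, producing a finite sum of $(1,r)$-atoms on balls in $\cB_c$ with a controlled $\ell^1$ coefficient norm. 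For the inequality $N^q_b(f)\le C\,N^q_c(f)$, the same tree yields $|f_B-f_{B'}|\le C\,N^q_c(f)$ by telescoping through comparable balls in $\cB_c$, and one bounds the oscillation of $f$ over $B$ by the triangle inequality applied over the sub-balls.

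For \rmii, the John--Nirenberg inequality is proved by the standard Calder\'on--Zygmund stopping-time argument carried out \emph{inside} a fixed admissible ball $B\in\cB_b$. All balls arising in the construction have radius bounded by a fixed multiple of $b$, so (LD) with constant $D_b$ (possibly enlarged) controls all subdivisions uniformly, and the argument is indistinguishable from the classical homogeneous-space case: normalizing $N^1_b(f)=1$, at each threshold $s$ one obtains a pairwise disjoint stopping family on which $|f-f_B|$ is large and whose total measure decays geometrically in $s$, yielding the asserted exponential bound. For \rmiii, the left inequality $N^1_b(f)\le N^q_b(f)$ is Jensen's inequality, while the right inequality $N^q_b(f)\le C\,N^1_b(f)$ follows from layer-cake integration of the exponential bound from \rmii, uniformly in $B\in\cB_b$.

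The main obstacle I anticipate is the atomic reduction step in \rmi: constructing the iterated \EB-covering is routine, but decomposing an atom on a medium-sized ball into atoms on small balls \emph{while preserving both the cancellation condition and a uniformly bounded $\ell^1$-norm of coefficients} requires careful bookkeeping along a tree of intermediate balls, with the local doubling constant entering at every level to keep branching and overlap uniformly bounded.
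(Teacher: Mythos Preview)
Your proposal is correct and matches the paper's approach: the paper simply refers to \cite[Prop.~4.3, Prop.~5.1, Thm~5.4, Cor.~5.5]{CMM} for all three parts, and your sketch accurately reconstructs those arguments---the iterated \EB-covering plus tree-based cancellation repair for \rmi, the Calder\'on--Zygmund stopping-time argument localized to balls in $\cB_b$ for \rmii, and Jensen plus layer-cake for \rmiii. Your observation that the exceptional atom and the $L^1$ constraint in $BMO$ are passive in these scale-invariance arguments is exactly the point that makes the transfer from the infinite-measure setting of \cite{CMM} automatic here.
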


\begin{proof}
The proof of \rmi\ is \emph{almost verbatim} the same
as the proofs of \cite[Prop.~4.3]{CMM} and \cite[Prop.~5.1]{CMM}
respectively, and is omitted. The proof of \rmii\ is the same as
the proof of \cite[Thm~5.4]{CMM}, and the proof of \rmiii\ follows
the lines of the proof of \cite[Corollary~5.5]{CMM}.
\end{proof}

Suppose that $b$ and $c$ are in $\BR^+$ and satisfy $R_0/(1-\beta)<c<b
$.
In view of Proposition~\ref{p: decphi}~\rmii-\rmiv, if $q$ and $r$
are in $[1,\infty)$, then the identity is a Banach space isomorphism
between $BMO_b^q(\mu)$ and $BMO_c^r(\mu)$. We denote simply by
$BMO(\mu)$ the Banach space $BMO_b^q(\mu)$ endowed with any of 
the
equivalent norms~$N_b^q$.

Similarly, in view of Proposition~\ref{p: decphi}~\rmi, if $r$ is in
$(1,\infty)$, and then $H_{b}^{1,r}(\mu)$ and $R_0/(1-\beta)<c<b$, then
$H_{c}^{1,r}(\mu)$ are isomorphic Banach spaces, and they will
simply be denoted by $H^{1,r}(\mu)$. In Section~\ref{s: duality} we
shall prove that the topological dual of $H^{1,r}({\mu})$ may be
identified with $BMO^{r'}({\mu})$, where $r'$ denotes the index
conjugate to $r$. Suppose that $1<r<s<\infty$.  Then
$\bigl(H^{1,r}({\mu})\bigr)^* = \bigl(H^{1,s}({\mu})\bigr)^*$,
because we have proved that $BMO^{r'}({\mu})= BMO^{s'}({\mu})$.
Observe that the identity is a continuous injection of
$H^{1,s}(\mu)$ into $H^{1,r}(\mu)$, and that $H^{1,s}(\mu)$ is a
dense subspace of $H^{1,r}(\mu)$.  Then we may conclude that
$H^{1,s}(\mu) = H^{1,r}(\mu)$. Then we shall denote $H^{1,r}(\mu)$
simply by $\hu{\mu}$.

\section{Duality} \label{s: duality}

In this section, we prove the analogue of the duality result
\cite[Thm~6.1]{CMM}. The proof in the finite measure case  is more 
difficult because we must show that for every linear functional $\ell$ in the 
dual of $H^1(\mu)$ the function $f^\ell$ that represents  the functional on 
$H^1(\mu)\cap L^2_c(\mu)$, constructed in \cite[Thm~6.1]{CMM}, is also 
in $L^1(\mu)$.

We need more notation and some preliminary observation.
Suppose that $b>R_0/(1-\beta)$, where $R_0$ and $\beta$ are the 
constants in the approximate midpoint property (AM) (see Section \ref{s: 
PPc}).
A ball $B$ in $\cB_b$ is said to be \emph{maximal} if
$
r_B = b.
$

We shall make use of the analogues in our setting
of the so-called dyadic cubes $Q_\al^k$
introduced by G.~David and M.~Christ \cite{Da,Ch} on spaces of
homogeneous type. 

\begin{theorem} \label{t: dyadic cubes}
There exists a collection of open subsets $\{Q_\al^k: k \in \BZ, \al \in I_k\}$
and constants $\de$ in $(0,1)$, $a_0$, $C_1$ in $\BR^+$ such that
\begin{enumerate}
\item[\itemno1]
$\bigcup_{\al} Q_\al^k$ is a set of full measure in $M$
for each $k$ in $\BZ$;
\item[\itemno2]
if $\ell \geq k$, then either $Q_\be^\ell \subset Q_\al^k$ or
$Q_\be^\ell \cap Q_\al^k = \emptyset$;
\item[\itemno3]
for each $(k,\al)$ and each $\ell < k$ there is a unique $\be$
such that $Q_\al^k \subset Q_\be^\ell$;
\item[\itemno4]
$\diam (Q_\al^k) \leq C_1 \, \de^k$;
\item[\itemno5]
each $Q_\al^k$ contains some ball $B(z_\al^k, a_0\, \de^k)$.
\end{enumerate}
\end{theorem}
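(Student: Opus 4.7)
The theorem is the locally-doubling analogue of the dyadic cube construction of M.~Christ \cite{Ch} and G.~David \cite{Da} on spaces of homogeneous type. The plan is to carry out that construction verbatim at each scale, verifying at each step that only balls of radius comparable to the current scale $\de^k$ intervene, so that the local doubling property (LD) is all one ever needs in place of global doubling.

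First I would fix $\de\in(0,1)$ sufficiently small (the smallness depending only on the local doubling constants $D_{\tau,b}$ for a few specific values of $\tau$ and $b$ to be determined). For each $k\in\BZ$, invoking Zorn's lemma, I would choose a maximal $\de^k/2$-separated set $\set{z_\al^k:\al\in I_k}$ in $M$; the maximality gives $M=\bigcup_\al B(z_\al^k,\de^k/2)$, while separation gives $\rho(z_\al^k,z_\be^k)\ge \de^k/2$ for $\al\ne\be$. A parent map $\pi_k: I_{k+1}\to I_k$ is then defined by assigning to $\al\in I_{k+1}$ the (uniquely chosen, with ties broken arbitrarily) index $\be\in I_k$ with $z_\al^{k+1}\in B(z_\be^k,\de^k/2)$.

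Next I would define auxiliary Voronoi-type sets $\widetilde Q_\al^k$ attached to the centers $z_\al^k$ among $\set{z_\be^k:\be\in I_k}$ (opened slightly so as to be open), and then obtain the cubes by the standard Christ nesting procedure
$$
Q_\al^k = \Int\Bigl(\,\bigcup_{n\ge 0}\bigcup_{\pi^n(\ga)=\al}\widetilde Q_\ga^{k+n}\Bigr).
$$
Properties \rmi, \rmii, \rmiii\ are immediate from the construction. For property \rmiv, the diameter bound $\diam(Q_\al^k)\le C_1\de^k$ follows by a telescoping argument: any point $x$ in $Q_\al^k$ lies in $\widetilde Q_\ga^{k+n}$ for some descendant $\ga$ of $\al$, and the distance from $x$ to $z_\al^k$ is bounded by $\sum_{j=0}^n \de^{k+j}$, which is a convergent geometric series dominated by $C_1\de^k$.

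The main obstacle is property \rmv, which in Christ's argument is the only place where doubling enters nontrivially: one must bound the measure of the thin layer near the boundary of the Voronoi cell of $z_\al^k$ that could be lost in the nesting step, and show it is a definite fraction less than $\mu(B(z_\al^k,a_0\de^k))$. In Christ's setting doubling is used globally; in our setting I would verify that every ball invoked in this estimate has radius bounded by a fixed multiple of $\de^k$, so that the required doubling inequality is a consequence of (LD) with a constant depending only on $D_{\tau,\de^k}$ for some fixed $\tau$. Since $\de^k$ ranges over $\BR^+$, a single uniform application of (LD) does \emph{not} give a uniform constant; however, because $D_{\tau,b}$ is monotone in $b$ and only the \emph{shape} of the estimate (a fixed proportion) is needed, the resulting $a_0$ and $C_1$ can be taken independent of $k$. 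A minor additional care is required for very negative $k$, where $\de^k$ may exceed the diameter of any bounded portion of $M$; in that range $I_k$ reduces to a finite or mildly infinite set and the construction is trivial since $\mu(M)<\infty$. No new idea beyond \cite{Ch,Da} is needed.
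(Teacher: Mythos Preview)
The paper does not supply its own proof of this theorem; it states the result and attributes the construction to David \cite{Da} and Christ \cite{Ch}. Your plan of rerunning Christ's argument while checking that at each resolution $k$ only balls of radius comparable to $\de^k$ appear is exactly the right one, and for each fixed $k$ the outline is correct.

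The weak point is your passage from ``works at each scale'' to ``uniform $a_0$ and $C_1$''. In Christ's proof the doubling constant enters through the thin-boundary estimate (this is what yields the full-measure property~\rmi\ for the \emph{open} cubes, not only property~\rmv\ as you say), and the constants in that estimate depend on the doubling constant at scale $\de^k$. Your sentence ``because $D_{\tau,b}$ is monotone in $b$ and only the shape of the estimate is needed, the resulting $a_0$ and $C_1$ can be taken independent of $k$'' is not an argument: $D_{\tau,b}$ is \emph{increasing} in $b$, so large scales are worse, and a ``fixed proportion'' conclusion cannot survive a doubling constant that blows up. The fallback that $\mu(M)<\infty$ trivialises very negative $k$ is also unconvincing, since $M$ is unbounded and the nets $\fZ^k$ need not become finite.

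For the paper's purposes this gap is harmless: every subsequent use of dyadic cubes (Lemma~\ref{l: geometric}, Lemma~\ref{l: covering II}, Lemma~\ref{l: rdi II}) involves only resolutions $k\ge\nu$ for some fixed $\nu$, so $\de^k\le\de^\nu$ and a single constant $D_{\tau,\de^\nu}$ from (LD) suffices throughout. If you want the statement literally for all $k\in\BZ$ you must either supply a genuine argument for large scales or restrict to $k\ge k_0$; neither you nor the paper does so, but only the latter is actually needed.
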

 It may help to think of $Q_\al^k$
as being essentially a cube of diameter $\de^k$ with ``centre" $z_\al^k$.
Note that \rmiv\ and \rmv\ imply that for every integer $k$
and each $\al$ in $I_k$
$$
B(z_{\al}^k, a_0 \, \de^k)
\subset Q_{\al}^k
\subset B(z_{\al}^k, C_1 \, \de^k/2).
$$
\begin{remark}
When we use dyadic cubes, we implicitly assume that for each
$k$ in $\BZ$ the set $M \setminus \bigcup_{\al \in I_k} Q_{\al}^k$
has been permanently deleted from the space.
\end{remark}
We shall denote by $\cQ^k$ the class of
all dyadic cubes of ``resolution'' $k$, i.e., the family of
cubes $\{Q_{\al}^k: \al \in I_k\}$, and by $\cQ$
the set of all dyadic cubes. 
We denote by $\fZ^{\nu}$ the set $\{z_{\al}^\nu: \al \in I_{\nu}\}$, i.e. the 
set of ``centres"  of all dyadic cubes of ``resolution" $\nu$. We recall that, 
in Christ's construction of the family $\cQ$ of
dyadic cubes, the set $\fZ^\nu$ is a maximal
collection of points in $M$ such that
$$
\rho(z_{\al}^\nu,z_{\be}^\nu) \geq \de^{\nu}
$$
for all $\al$, $\be$ in $I_{\nu}$, with $\al \neq \be$.

We shall need the following additional properties of dyadic cubes.

\begin{lemma} \label{l: geometric}
Choose an integer $\nu$ such that $\de^{\nu}\, \min(1,2a_0) > R_0$
and $b$ in $\BR^+$ such that $b> 4\de^\nu\, \max(1/(1-\be),a_0)$.
For each $z^\nu_\al$ in $\fZ^{\nu}$ denote by $B_{\al}$ the ball
$B(z_{\al}^\nu, b)$. The following hold:
\begin{enumerate}
\item[\itemno1]
 the balls $\{B_{\al}\}$ form a
locally uniformly finite covering of $M$, i.e. there exists an
integer $N_0$ such that
$$
1 \leq \sum_{j\in \BN} \One_{B_{\al}} \leq N_0;
$$
\item[\itemno2]
for every pair $o$, $z$ of distinct points in $\fZ^\nu$,
there exists a chain of $N$
points $z_{\al_1}^\nu, \ldots, z_{\al_N}^\nu$ in $\fZ^\nu$ such that
$o=z_{\al_1}^\nu$, $z =  z_{\al_N}^\nu$,
$$
N\leq 4\, \Bigl(\frac{2d}{b}\Bigr)^{1/[1-\log_2(1+\be)]} + 1
\qquad\hbox{and}\qquad
\rho(z_{\al_{j}}^\nu,z_{\al_{j+1}}^\nu)
< b/2,
$$
where $d$ denotes the distance $\rho(o,z_{\al}^{\nu})$.
Furthermore, for $j\in\{1,\ldots, N-1\}$ the intersection
$B_{\al_j} \cap B_{\al_{j+1}}$ contains the ball
$B(z_{\al_{j+1}},a_0\, \de^\nu)$, and
\begin{equation} \label{f: ratio of balls}
\frac{\mu\bigl(B_{\al_{j+1}}\bigr)}{\mu\bigl(B(z_{\al_{j+1}}^\nu,
a_0\, \de^\nu)\bigr)} \leq D_{b/(a_0\de^\nu),a_0\, \de^\nu}.
\end{equation}
\end{enumerate}
\end{lemma}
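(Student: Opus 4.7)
My plan is to handle the two parts of the lemma separately, starting with the covering property in (i), then building the chain in (ii). Both rest on the $\delta^\nu$-maximality of $\fZ^\nu$ combined with the local doubling property (LD) and the approximate midpoint property (AM).

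For \rmi, the covering inequality $1\le\sum_\alpha \One_{B_\alpha}$ follows immediately from maximality: any $x\in M$ lies within $\delta^\nu$ of some $z_\alpha^\nu$, and since $b>\delta^\nu$ (our standing hypothesis on $b$) we have $x\in B_\alpha$. For the uniform upper bound, I would fix $x\in M$, consider the set of indices $\alpha$ with $x\in B_\alpha$, and use the $\delta^\nu$-separation of $\fZ^\nu$ to conclude that the smaller balls $\{B(z_\alpha^\nu,\delta^\nu/2)\}$ are pairwise disjoint. All of these sit inside $B(x,b+\delta^\nu/2)$, and for each such $\alpha$ we have $B(x,b+\delta^\nu/2)\subset B(z_\alpha^\nu,2b+\delta^\nu/2)$; so by (LD) (in the form of Remark~\ref{r: geom I} with $\tau=(2b+\delta^\nu/2)/(\delta^\nu/2)$ and base radius $\delta^\nu/2$) we get $\mu(B(x,b+\delta^\nu/2))\le C\,\mu(B(z_\alpha^\nu,\delta^\nu/2))$ with a constant $C$ independent of $\alpha$. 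Summing the disjoint measures yields a uniform bound on the cardinality, giving $N_0=C$.

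For \rmii, the chain is built by iterated bisection. If $\rho(o,z)<b/2$ the trivial chain $o,z$ does the job. Otherwise I would apply (AM) to $(o,z)$ (noting $\rho(o,z)\ge b/2>2\delta^\nu>R_0$) to produce $w\in M$ with $\rho(o,w),\rho(z,w)<\beta\rho(o,z)$, then use maximality to pick $w'\in\fZ^\nu$ with $\rho(w,w')<\delta^\nu$, and recurse on the pairs $(o,w')$ and $(w',z)$, stopping on any sub-pair whose distance drops below $b/2$. The crucial observation that yields the claimed exponent is that whenever $\rho(u,v)\ge b/2$ the hypothesis $b>4\delta^\nu/(1-\beta)$ forces $\delta^\nu<(1-\beta)\rho(u,v)/2$, so the post-approximation sub-distances satisfy
$$
\rho(u,w'),\,\rho(v,w')\le\beta\rho(u,v)+\delta^\nu\le\frac{1+\beta}{2}\,\rho(u,v).
$$
Thus the effective bisection factor is $(1+\beta)/2$, not $\beta$. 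After $k$ recursive levels every consecutive distance is $\le((1+\beta)/2)^k d$; requiring this to drop below $b/2$ gives $k\ge\log(2d/b)/\log(2/(1+\beta))$. Since the chain has at most $2^k+1$ points and $1/\log_2(2/(1+\beta))=1/[1-\log_2(1+\beta)]$, this yields $N\le 4(2d/b)^{1/[1-\log_2(1+\beta)]}+1$.

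Finally, the intersection and ratio assertions are direct consequences of the hypotheses on $b$. For $y\in B(z_{\alpha_{j+1}},a_0\delta^\nu)$ we have $\rho(z_{\alpha_j},y)\le\rho(z_{\alpha_j},z_{\alpha_{j+1}})+a_0\delta^\nu<b/2+a_0\delta^\nu<b$ (using $b>4a_0\delta^\nu$), and trivially $\rho(z_{\alpha_{j+1}},y)<a_0\delta^\nu<b$, giving the inclusion. Inequality (\ref{f: ratio of balls}) is just Remark~\ref{r: geom I} applied to the concentric balls $B(z_{\alpha_{j+1}}^\nu,a_0\delta^\nu)\subset B(z_{\alpha_{j+1}}^\nu,b)$ with $\tau=b/(a_0\delta^\nu)$. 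The main technical point of the proof is spotting that the approximate-midpoint error $\delta^\nu$ can be absorbed into a clean geometric contraction factor $(1+\beta)/2$ precisely because the stopping threshold $b/2$ is larger than $2\delta^\nu/(1-\beta)$; everything else is bookkeeping with (LD) and the triangle inequality.
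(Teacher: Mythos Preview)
Your argument is correct and follows essentially the same line as the paper's proof: the same maximality/separation reasoning for \rmi, and the identical iterated approximate-midpoint construction for \rmii, with the key contraction factor $(1+\be)/2$ obtained by absorbing the $\de^\nu$ approximation error via $b>4\de^\nu/(1-\be)$. The only cosmetic difference is that for the upper bound in \rmi\ the paper routes the counting through dyadic cubes (a ball of radius $2b$ meets at most $N_0$ cubes of $\cQ^\nu$), whereas you use the equivalent direct packing argument with the disjoint balls $B(z_\al^\nu,\de^\nu/2)$ and (LD); both are standard and yield the same conclusion.
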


\begin{proof}
First we prove \rmi. By the maximaliy of the collection $\fZ^\nu$,
for each $x$ in $M$ there exists $z^\nu_\al$ in $\fZ^{\nu}$
such that $\rho(z_{\al}^\nu,x)<\de^\nu$.  This implies 
the
left inequality in \rmi.

A simple variation of the proof of \cite[Prop. 3.4 \rmiv]{CMM} shows that 
there exists an integer $N_0$, which depends on $b$, $\nu$, $a_0$ and 
$C_1$, such that a ball of radius $2b$ intersects at most $N_0$ cubes in 
$\cQ^\nu$. Let $A(x)=\set{B_\alpha: x\in B_\alpha}$. Since $z^\nu_\alpha
\in B_\alpha$ and  
$\bigcup_{B_\alpha\in A(x)} B_\alpha \subset B(x, 2b)$, 
the cubes $Q^\nu_{z^\nu_\alpha}$, $B_\alpha\in A(x)$, intersect $B(x,2b)$. 
Thus the cardinality of $A(x)$ is at most $N_0$. This proves  the right 
inequality in \rmi.

Next we prove \rmii.  Recall that $d$ denotes the distance
between $o$ and $z$.  Denote by $B^o$ and $B^z$ the balls
with radius $b$ centred at $o$ and $z$ respectively.

First suppose that $d< b/2$.  Then the chain reduces to the two points $o$ 
and $z$. Moreover $B^o \cap B^z$ contains
the ball $B(z,a_0\, \de^\nu)$.
Indeed, $B^z$
contains $B(z,a_0\, \de^\nu)$
(recall that $b> 4a_0\, \de^\nu$),
and $B^o$ contains $B(z,a_0\, \de^\nu)$,
because $B^o$ has radius $b$ and $b>b/2+a_0 \, \de^\nu$
is equivalent to $b>2a_0 \, \de^{\nu}$, which we assume.

Next suppose that $d\geq b/2$.  Since $b/2 > R_0$, there exists
a point $z_1$ in $M$ such that
$$
\max \bigl(\rho(z_1,o),\rho(z_1,z)\bigr)
< \be \, d
$$
by the \EB\ property.  In general $z_1$ need not be in $\fZ^\nu$.
However, by the maximality of $\fZ^\nu$,  there exists $z^\nu_{\al_1}$ in $
\fZ^\nu$ such that
$\rho(z^\nu_{\al_1},z_1) < \de^\nu$. We observe that
$$
\max \bigl(\rho(z_{\al_1}^\nu,o),\rho(z_{\al_1}^\nu,z)\bigr)
< \frac{1+\be}{2} \, d.
$$
Indeed, by the triangle inequality
$$
\begin{aligned}
\rho(z_{\al_1}^\nu,o)
& \leq \rho(z_{\al_1}^\nu,z_1) + \rho(z_1,o) \\
& \leq \de^\nu + \be \, d.
\end{aligned}
$$
Now, note that the conditions $d\geq b/2$ and $b> 4\de^\nu/(1-\be)$
imply $\de^{\nu} < (1-\be)\, d/2$, and we may conclude that
$$
\rho(z_{\al_1}^\nu,o)
< \Bigl(\frac{1-\be}{2} + \be \Bigr) \, d = \frac{1+ \be}{2} \, d.
$$
Similarly, we may show that $\rho(z_{\al_1}^\nu,z)< (1+\be)d/2$.

We have now a {chain} consisting of three ordered points
$o$, $z_{\al_1}^\nu$ and $z$.  The distance of
two subsequent points is $< (1+\be) \, d/2$.

Now consider the first two points $o$ and $z_{\al_1}^\nu$ of the
chain. If their distance is $<b/2$, then $B^o\cap B_{\al_1}$
contains the ball $B(z_{\al_1}^\nu,a_0\, \de^\nu)$. If, instead,
their distance is $\geq b/2$, then we may repeat the argument above,
and find $z^\nu_{\al_{1}^{(2)}}$ in $\fZ^\nu$ such that
$$
\max \bigl(\rho(z_{\al_{1}^{(2)}}^\nu,o),
\rho(z_{\al_{1}^{(2)}}^\nu,z_{\al_1}^\nu)\bigr)
< \Bigl(\frac{1+\be}{2}\Bigr)^2 \, d.
$$
Next we consider the two points $z_{\al_1}^\nu$ and $z$ of
the chain and argue similarly. Either their distance is $<b/2$, and
$B^z\cap B_{\al_1}$ contains the ball $B(z,a_0\,
\de^\nu)$, or their distance is $\geq b/2$, and we may find
$z^\nu_{\al_{2}^{(2)}}$ in $\fZ^\nu$ such that
$$
\max \bigl(\rho(z_{\al_{2}^{(2)}}^\nu,z_{\al_1}^\nu),
\rho(z_{\al_{2}^{(2)}}^\nu,z)\bigr)
< \Bigl(\frac{1+\be}{2}\Bigr)^2 \, d.
$$

By iterating the procedure described above $n$ times, we find a chain of
points $z_{\al_1}^\nu, \ldots, z_{\al_N}^\nu$, such that
$o=z_{\al_1}^\nu$, $z =  z_{\al_N}^\nu$, such that
$$
\rho(z_{\al_{j}}^\nu,z_{\al_{j+1}}^\nu)
< \Bigl(\frac{1+\be}{2}\Bigr)^n \, d
\quant j \in \{1,\ldots, N-1\}.
$$
If $n$ is the least integer $\geq \log_2(2d/b)/\log_2[2/(1+\be)]$, then
$$
\Bigl(\frac{1+\be}{2}\Bigr)^n \, d
< b/2,
$$
and for all $j$ in $\{1,\ldots, N-1\}$ the intersection
$B_{\al_j} \cap B_{\al_{j-1}}$ contains the ball
$B(z_{\al_{j+1}},a_0\, \de^\nu)$.  Furthermore, the number $N$
of points of the chain is at most
$$
4\, \Bigl(\frac{2d}{b}\Bigr)^{1/[1-\log_2(1+\be)]} + 1,
$$
and
$$
\frac{\mu\bigl(B_{\al_{j+1}}\bigr)}{\mu\bigl(B(z_{\al_{j+1}}^\nu,
a_0\, \de^\nu)\bigr)} \leq D_{b/(a_0\de^\nu),a_0\, \de^\nu}
$$
for all $j$ in $\{1,\ldots, N-1\}$, by the locally doubling property.

This concludes the proof of \rmii.
\end{proof}

We need more notation and some preliminary observations. Let
$b>0$. For each ball $B$ in $\cB_b$ let $\ldO{B}$ denote the Hilbert
space of all functions $f$ in $\ld{\mu}$ such that the support of
$f$ is contained in $B$ and $\int_B f \wrt {\mu } = 0$. We remark
that a function $f$ in $\ldO{B}$ is a multiple of a $(1,2)$-atom,
and that, for all $c\geq b$,
\begin{equation}  \label{f: basic estimate atom}
\norm{f}{H^{1,2}_c({\mu})} \leq \mu (B)^{1/2} \, \norm{f}{\ld{B}}.
\end{equation}

Let $\ell$ be a bounded linear functional on $H^{1,2}({\mu})$. Then,
for each $B$ in $\cB$ the restriction of $\ell$ to $\ldO{B}$ is a
bounded linear functional on $\ldO{B}$.  Therefore, by the Riesz
representation theorem there exists a unique function $\ell^B$ in
$\ldO{B}$ which represents the restriction of $\ell$ to $\ldO{B}$.
Note that for every constant $\eta$ the function $\ell^B+\eta$
represents the same functional, though it is not in $\ldO{B}$ unless
$\eta$ is equal to $0$. Denote by $\norm{\ell}{H^{1,2}(\mu)^*}$
the norm of $\ell$. Then, by (\ref{f: basic estimate atom}), we have
\begin{equation} \label{f: ellB}
\norm{\ell^B}{\ldO{B}}\leq
\mu(B)^{1/2}\norm{\ell}{H^{1,2}({\mu})^*}
\end{equation}

For every $f$ in $BMO^{r'}({\mu})$ and every finite linear combination
$g$ of $(1,r)$-atoms the integral $\int_{\BR^d} f\, g \wrt{\mu }$ is 
convergent.
Let $H_{\mathrm{fin}}^{1,r}({\mu})$ denote the subspace of $H^{1,r}({\mu})
$
consisting of all finite linear combinations of $(1,r)$-atoms.
Then $g \mapsto \int_{\BR^d} f\, g \wrt {\mu }$ defines a linear
functional on $H_{\mathrm{fin}}^{1,r}({\mu})$.  We observe that
$H_{\mathrm{fin}}^{1,r}({\mu})$ is dense in $H^{1,r}({\mu})$.

\begin{theorem} \label{t: duality}
Suppose that $r$ is in $(1,\infty)$.  The following hold
\begin{enumerate}
\item[\itemno1]
for every $f$ in $BMO^{r'}({\mu})$ the functional $\ell$, initially defined
on $H_{\mathrm{fin}}^{1,r}({\mu})$ by the rule
$$
\ell(g) = \int_{\BR^d} f\, g \wrt {\mu },
$$
extends to a bounded functional on $H^{1,r}({\mu})$.  Furthermore,
$$
\norm{\ell}{H^{1,r}({\mu})}
\leq \norm{f}{BMO^{r'}({\mu})};
$$
\item[\itemno2]
there exists a constant $C$ such that for every continuous linear
functional $\ell$ on $H^{1,r}({\mu})$ there exists a function
$f^\ell$ in $BMO^{r'}({\mu})$ such that
$\norm{f^\ell}{BMO^{r'}({\mu})} \leq C \,
\norm{\ell}{H^{1,r}({\mu})^*}$ and
$$
\ell(g) = \int_{\BR^d} f^\ell\, g \wrt {\mu }
\quant g \in H_{\mathrm{fin}}^{1,r}({\mu}).
$$
\end{enumerate}
\end{theorem}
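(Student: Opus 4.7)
\emph{Proof plan.} Part~\rmi\ is a direct computation. For a $(1,r)$-atom $a$ supported in $B \in \cB_b$, the cancellation $\int_B a \wrt \mu = 0$ and H\"older's inequality give
$$
\Bigabs{\int_M f \, a \wrt \mu} = \Bigabs{\int_B (f-f_B) \, a \wrt \mu} \leq \norm{a}{L^r(B)} \, \norm{f-f_B}{L^{r'}(B)} \leq N_b^{r'}(f),
$$
by the atomic size condition, while the exceptional atom $1/\mu(M)$ pairs against $f$ with absolute value at most $\norm{f}{1}/\mu(M)$. Summing over any atomic decomposition of $g$ and taking the infimum yields the bound on $\norm{\ell}{H^{1,r}(\mu)^*}$. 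For part~\rmii\ I would follow the template of \cite[Thm~6.1]{CMM}, with one crucial addition: here property~\cIP\ must be invoked to show that the function $f^\ell$ thus constructed lies in $L^1(\mu)$.

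First I would build the local representers. For each $B \in \cB_b$, any $a \in L^r(B)$ with mean zero is a scalar multiple of a $(1,r)$-atom and satisfies $\norm{a}{H^{1,r}(\mu)} \leq \mu(B)^{1/r'} \, \norm{a}{L^r(B)}$; Riesz duality applied to $a \mapsto \ell(a)$ on this subspace therefore produces a unique $\ell^B \in L^{r'}(B)$ with mean zero satisfying
$$
\Bigl(\frac{1}{\mu(B)} \int_B \mod{\ell^B}^{r'} \wrt \mu\Bigr)^{1/r'} \leq \norm{\ell}{H^{1,r}(\mu)^*}.
$$
If $B, B' \in \cB_b$ overlap, then $\ell^B - \ell^{B'}$ is constant on $B \cap B'$. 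To glue the local representers globally, I would choose $\nu$ and $b$ according to Lemma~\ref{l: geometric}, set $B_\alpha = B(z^\nu_\alpha, b)$, fix a base index $\alpha_0$, and for each $\alpha$ pick a chain $z^\nu_{\alpha_0} = z^\nu_{\alpha_1}, \ldots, z^\nu_{\alpha_N} = z^\nu_\alpha$ as in Lemma~\ref{l: geometric}\rmii. Define constants $c_{\alpha_j}$ inductively by $c_{\alpha_0} = 0$ and by requiring $\ell^{B_{\alpha_j}} + c_{\alpha_j}$ and $\ell^{B_{\alpha_{j+1}}} + c_{\alpha_{j+1}}$ to agree on $B(z^\nu_{\alpha_{j+1}}, a_0 \de^\nu) \subset B_{\alpha_j} \cap B_{\alpha_{j+1}}$, and set $f^\ell|_{Q^\nu_\alpha} = \ell^{B_\alpha} + c_\alpha$. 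Consistency of Riesz representers makes $f^\ell$ well-defined independently of chain choice, and a standard chaining argument gives $N_b^{r'}(f^\ell) \leq C \norm{\ell}{H^{1,r}(\mu)^*}$.

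The main obstacle is the $L^1$-bound on $f^\ell$. The Riesz estimate on $\ell^{B_{\alpha_j}}$ together with (\ref{f: ratio of balls}) makes each chain increment $\abs{c_{\alpha_{j+1}} - c_{\alpha_j}}$ bounded by a constant multiple of $\norm{\ell}{H^{1,r}(\mu)^*}$, and Lemma~\ref{l: geometric}\rmii\ then yields the polynomial growth bound
$$
\abs{c_\alpha} \leq C\, \bigl(1 + \rho(o, z^\nu_\alpha)^p\bigr) \, \norm{\ell}{H^{1,r}(\mu)^*}, \qquad p = 1/[1 - \log_2(1+\be)],
$$
where $o = z^\nu_{\alpha_0}$. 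Since $\norm{\ell^{B_\alpha}}{L^1(B_\alpha)} \leq \mu(B_\alpha) \norm{\ell}{H^{1,r}(\mu)^*}$ by H\"older, the finite-overlap property from Lemma~\ref{l: geometric}\rmi\ gives
$$
\int_M \mod{f^\ell} \wrt \mu \leq C \, \norm{\ell}{H^{1,r}(\mu)^*} \int_M \bigl(1 + \rho(o,x)^p\bigr) \wrt \mu(x).
$$
This is where property~\cIP\ becomes indispensable: in the infinite-measure setting the right-hand side would diverge, but Proposition~\ref{p: PPc}\rmii\ gives $\mu(B(o,r)^c) \leq C \e^{-{\IcM} r}$, and a layer-cake argument then yields $\int_M \rho(o,x)^p \wrt \mu < \infty$ for every $p > 0$. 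Hence $f^\ell \in L^1(\mu) \cap BMO^{r'}(\mu)$ with the required norm bound. Finally, the identity $\ell(g) = \int_M f^\ell \, g \wrt \mu$ on $H^{1,r}_{\mathrm{fin}}(\mu)$ is verified atom-by-atom, using $\int a \wrt \mu = 0$ to kill the additive constants for standard atoms and using $f^\ell \in L^1(\mu)$ for the exceptional atom.
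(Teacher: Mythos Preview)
Your proposal is correct and follows the same strategy as the paper: local Riesz representers, polynomial growth of the gluing constants via the chain bound of Lemma~\ref{l: geometric}\rmii, and then the exponential decay of Proposition~\ref{p: PPc}\rmii\ to obtain the $L^1$ bound on $f^\ell$. The only cosmetic difference is that the paper assembles $f^\ell$ on an exhausting family of concentric balls $B(o,b')$, $b'\ge b$ (so that global consistency is automatic and the $BMO$ bound is immediate without chaining), whereas you patch it together on the dyadic cubes $Q^\nu_\alpha$ with chain-determined constants $c_\alpha$; both constructions yield the same function up to an additive constant and the same estimates.
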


\begin{proof}
The proof of \rmi\ follows the line of the proof of \cite{CW}
which is based on the classical result of C.~Fefferman \cite{F, FS}.
We omit the details.

Now we prove \rmii\ in the case where $r$ is equal to $2$.
The proof for $r$ in $(1,\infty)\setminus\{2\}$
is similar and is omitted.

Let $\ell$ be a bounded linear functional on $H^{1,2}({\mu})$. Fix  $\nu\in
\BZ$ and $b\in \BR^+$ as in Lemma  \ref{l: geometric}, such that $b$ is 
also greater than $R_0/(1-\beta)$, where $R_0$ and $\beta$ are the 
constants of assumption (AM). Recall that for all $b'\ge b$ the space
$H^{1,2}(\mu)$ is isomorphic to $H_{b'}^{1,2}(\mu)$ with norm
$\norm{\cdot}{H_{b'}^{1,2}(\mu)}$, by Proposition \ref{p: decphi} . Thus, we 
may interpret $\ell$ as a
continuous linear functional on $H_{b'}^{1,2}(\mu)$ for all $b'\geq
b$. Fix a  point $o$ in $\fZ^\nu$. For each $b'\ge b$ there exists
a function $\ell^{B(o,b')}$ in $L^2_{0}(B(o,b'))$ that represents $\ell$ as 
functional on 
$L^2\big(B(o,b')\big)$. Since both $\ell^{B(o,b)}$ and the restriction of $
\ell^{B(o,b')}$
to $B(0,b)$ represent the same functional on $L^2_0\big(B(o,b)\big)$, 
there exists a constant $\eta^{B(0,b')}$ such
that
$$
\ell^{B(o,b)}-\ell^{B(o,b')} = \eta^{B(0,b')}
$$
on $B(o,b)$. By integrating both sides of this equality on $B(o,b)$
we see that
$$
\eta^{B(0,b')}= -\frac{1}{\mu\bigl(B(o,b)\bigr)} \int_{B(o,b)} \ell^{B(o,b)}
\wrt \mu.
$$
Note that, since $\ell^{B(o,b)}\in L^2_0\big(B(o,b)\big)$,
\begin{equation}\label{media0}
\eta^{B(o,b)}=0.
\end{equation}
Define
$$
f^{\ell} (x) = \ell^{B(0,b')}(x) + \eta^{B(0,b')} \quant x \in B(o,b') \quant b'\ge 
b.
$$
It is straightforward to check that this is a good definition.

We claim that the function $f^\ell$ is
in $BMO({\mu})$ and there exists a constant~$C$ such that
$$
\norm{f^{\ell}}{BMO({\mu})}
\leq C \, \norm{\ell}{H^{1,2}({\mu})^*}
\quant \ell \in H^{1,2}({\mu})^*.
$$
First we show that $N_{b}^2(f^\ell) \leq
\norm{\ell}{H^{1,2}({\mu})^*}$. Indeed, choose a ball $B$ in
$\cB_{b}$.
 Then there exists a function $\ell^B$  in $\ldO{B}$ that represents the
restriction of $\ell$ to $\ldO{B}$ and a constant $\eta^B$ such that
\begin{equation} \label{f: fell on B}
f^\ell \big\vert_B = \ell^B + \eta^B.
\end{equation}
By integrating both sides on $B$, we see that $\eta^B = \bigl(f^\ell)_B$.
Thus, by (\ref{f: fell on B}) and (\ref{f: ellB}),
$$
\begin{aligned}
\Bigl( \frac{1}{\mu (B)} \int_B \bigmod{f^\ell - \bigl(f^\ell\bigr)_B}^2
\wrt {\mu } \Bigr)^{1/2}
& = \Bigl( \frac{1}{\mu (B)} \int_B \bigmod{\ell^B}^2
      \wrt {\mu } \Bigr)^{1/2} \\
& \leq \norm{\ell}{H^{1,2}({\mu})^*},
\end{aligned}
$$
so that $N_{b}^2(f^\ell) \leq \norm{\ell}{H^{1,2}({\mu})^*}$, as
required.\par
 Next we show that $f^\ell$ is in $\lu\mu$ and that
$\norm{f^\ell}{1} \leq C \, \norm{\ell}{H^{1,2}(\mu)^*}$. Let
 $\set{B_\alpha}$ be the covering described in Lemma~\ref{l: geometric}.
For each integer  $h\geq
2$ let $A_h$ denote the annulus $B(o,hb) \setminus B \bigl(o,(h-1)b
\bigr)$. For the sake of brevity denote $B(o,b)$  by
$B^o$. Observe that $M=B^o\bigcup\left(\bigcup_{h=2}^\infty A_h\right)$.  
The left inequality in Lemma~\ref{l: geometric} \rmi\
implies that
\begin{equation} \label{f: main estimate}
\begin{aligned}
\bignorm{f^\ell}{1} & = \bignorm{f^\ell}{\lu{B^o}}
        + \sum_{h=2}^\infty \bignorm{f^\ell}{\lu{A_h}} \\
& \leq \bignorm{\ell^{B^o}}{\lu{B^o}}
        + \sum_{h=2}^\infty \sum_{\{B_\alpha:
          B_{\al} \cap A_h \neq \emptyset\}}
          \bignorm{f^\ell}{\lu{B_{\al}}}.
\end{aligned}
\end{equation}
By (\ref{f: fell on B}), the triangle inequality,
the Schwarz inequality and (\ref{f: ellB})
\begin{equation} \label{f: estimate on balls}
\begin{aligned}
\bignorm{f^\ell}{\lu{B_{\al}}} &  \leq \mu(B_{\al})^{1/2} \,
\bignorm{\ell^{B_{\al}}}{\ldO{B_{\al}}}
      + \mu(B_{\al}) \, \mod{\eta^{B_{\al}}} \\
&  \leq \mu(B_{\al}) \, \norm{\ell}{H^{1,2}(\mu)^*}
      + \mu(B_{\al}) \, \mod{\eta^{B_{\al}}}.
\end{aligned}
\end{equation}

Now, we claim that if $B_\al \cap A_h \neq \emptyset$, then
\begin{equation}  \label{f: ball and its mother}
\bigmod{\eta^{B_\al}}
\leq8\Bigl(\frac{2d}{b}\Bigr)^{1/[1-\log_2(1+\be)]}
\, \sqrt{D} \,
        \norm{\ell}{H^{1,2}(\mu)^*},
        \end{equation}
where $D=D_{b/(a_0\de^\nu),a_0\, \de^\nu}$ is the doubling constant 
corresponding to the parameters $b/(a_0\de^\nu)$ and $a_0\, \de^\nu$ 
(see Remark \ref{r: geom I}), and $d$ denotes the distance  of $o$  from 
the centre $z_{\al}^\nu$ of $B_\alpha$.

By Lemma~\ref{l: geometric}~\rmii\ there exists a chain of points
$z_{\al_1}^\nu, \ldots, z_{\al_N}^\nu$, such that
$o=z_{\al_1}^\nu$, $z_{\al}^\nu =  z_{\al_N}^\nu$, with
$$
N \leq 4\, \Bigl(\frac{2d}{b}\Bigr)^{1/[1-\log_2(1+\be)]} + 1,
$$
and such that for all $j$ in $\{1,\ldots, N-1\}$ the intersection
$B_{\al_j} \cap B_{\al_{j-1}}$ contains the ball
$B(z^\nu_{\al_{j+1}},a_0\, \de^\nu)$.
Denote by $B_{\al_{j}}'$ the ball $B(z_{\al_j}^\nu,a_0\, \de^\nu)$.
Since, by (\ref{f: fell on B}), $\ell^{B_{\al_{j-1}}} + \eta^{B_{\al_{j-1}}}
= \ell^{B_{\al_{j}}} + \eta^{B_{\al_{j}}}$
on $B_{\al_{j-1}} \cap B_{\al_{j}}$, hence on $B_{\al_{j}}'$,
$$
\begin{aligned}
\bigmod{\eta^{B_{\al_j}}}
& \leq \bigmod{\bigl(\ell^{B_{\al_{j-1}}}
         + \eta^{B_{\al_{j-1}}}\bigr)_{{B_{\al_j}'}}}
         + \bigmod{\bigl(\ell^{B_{\al_j}}\bigr)_{{B_{\al_j}'}}} \\
& \leq \Bigl(\frac{1}{\mu({B_{\al_j}'})}
        \int_{{B_{\al_j}'}} \bigmod{\ell^{B_{\al_{j-1}}}}^2
        \wrt \mu \Bigr)^{1/2}
+ \bigmod{\eta^{B_{\al_{j-1}}}}
        + \Bigl(\frac{1}{\mu({B_{\al_j}'})}
        \int_{B_{\al_j}'} \bigmod{\ell^{{B_{\al_j}}}}^2 \wrt \mu \Bigr)^{1/2}
\end{aligned}
$$
by the triangle inequality and Schwarz's inequality.
Now we use (\ref{f: ellB}) to estimate the first and the
third summand and obtain that
\begin{equation}
\begin{aligned}
\bigmod{\eta^{B_{\al_j}}}
& \leq \sqrt{\frac{\mu\bigl(B_{\al_{j-1}}\bigr)}{\mu({B_{\al_j}'})}}
        \, \norm{\ell}{H^{1,2}(\mu)^*} +  \bigmod{\eta^{B_{\al_{j-1}}}}
        +  \sqrt{\frac{\mu({B_{\al_j}})}{\mu({B_{\al_j}'})}}
        \, \norm{\ell}{H^{1,2}(\mu)^*} \\
& \leq 2\, \sqrt{D} \,
        \norm{\ell}{H^{1,2}(\mu)^*}  + \bigmod{\eta^{B_{\al_{j-1}}}}.
\end{aligned}
\end{equation}
 Note that we have used (\ref{f: ratio of balls})
in Lemma~\ref{l: geometric} \rmii\ in the last inequality.
Hence, iterating this inequality, we obtain
\begin{align*}
\mod{\eta^{B_\alpha}}=\mod{\eta^{B_{\alpha_N}}}&\le 2(N-1)\sqrt{D}\,
\norm{\ell}{H^{1,2}(\mu)^*}+\mod{\eta^{B_0}} \\ 
&\le 8\Bigl(\frac{2d}{b}\Bigr)^{1/[1-\log_2(1+\be)]}\ \sqrt{D}\,\norm{\ell}{H^
{1,2}(\mu)^*},
\end{align*} 
because $\eta^{B_0}=0$. This proves the claim (\ref{f: ball and its 
mother}).\par
Now (\ref{f: estimate on balls}) and (\ref{f: ball and its mother})
imply that for all the balls of the covering $\set{B_\alpha}$
\begin{equation} \label{f: estimate fell}
\bignorm{f^\ell}{\lu{B_{\al}}} \leq \Bigl[1+ 8\Bigl(\frac{2d}{b}\Bigr)^{1/[1-
\log_2(1+\be)]}  \,
\sqrt{D} \, \Bigr] \, \mu(B_\alpha)
\, \norm{\ell}{H^{1,2}(\mu)^*},
\end{equation}
where $d$ denotes the distance $\rho(z_{\al}^\nu,o)$.
Note that if $B_\alpha\cap A_h\not=\emptyset$ then $d \leq (h+1) \, b$.
\par 
We estimate the first summand in (\ref{f: main estimate}) by
Schwarz's inequality and (\ref{f: ellB}), while we use
(\ref{f: estimate fell}) to estimate the other summands, and obtain that
$$
\bignorm{f^\ell}{1}
\leq \norm{\ell}{H^{1,2}(\mu)^*}\left(\mu(B^o) \, 
        + C\, 
        \sum_{h=2}^\infty (h+1)^{1/[1-\log_2(1+\be)]}
        \sum_{\{B_{\al} :
        B_{\al} \cap A_h \neq \emptyset\}}  \,
        \mu\bigl(B_{\al}\bigr)\right).
$$
Since the balls $\{B_{\al_j}\}$ have the finite
intersection property by Lemma~\ref{l: geometric}~\rmi\
and each such ball intersects at most three annuli $A_h$,
we have that
$$
\bignorm{f^\ell}{1}
\leq \mu(B^o) \, \norm{\ell}{H^{1,2}(\mu)^*}
   + C\, \norm{\ell}{H^{1,2}(\mu)^*}
    \sum_{h=2}^\infty (h+1)^{1/[1-\log_2(1+\be)]}
    \, \sum_{j = h-2}^{h+2} \mu\bigl(A_j\bigr).
$$
By Proposition~\ref{p: PPc}~\rmii\ there exist constants $\eta$ in
$(0,1)$  and $C>0$ such that $\mu(A_j) \leq C\, \eta^j$. Thus
$$
\sum_{h=2}^\infty (h+1)^{1/[1-\log_2(1+\be)]}
    \, \sum_{j = h-2}^{h+2} \mu\bigl(A_j\bigr)
< \infty,
$$
and we may conclude that
$$
\bignorm{f^\ell}{1}
\leq C \, \norm{\ell}{H^{1,2}(\mu)^*},
$$
thereby proving that $f^\ell$ is in $\lu\mu$.
\end{proof}

\begin{remark}
Note that the proof of Theorem~\ref{t: duality} does not
apply, strictly speaking, to the case where $r$ is equal to
$\infty$.  However, a straightforward, though tedious, adaptation to the 
case
where $\mu$ is only locally doubling
of a classical result \cite{CW}, show that $H^{1,\infty}(\mu)$
and $H^{1,2}(\mu)$ agree, with equivalence of norms.
Consequently, the dual space of $H^{1,\infty}(\mu)$ is $BMO(\mu)$.
\end{remark}

\section{Interpolation} \label{s: interpolation}

In this section we prove, for the finite measure case, the analogues of the 
interpolation theorems proved in \cite{CMM} when $\mu(M)=\infty$. 
Because of the close similarity with the infinite measure case, we shall be 
rather sketchy in our exposition and we shall only indicate the necessary 
modifications to the statements and the proofs.\par
The first technical ingredient in the proof of the interpolation theorems in 
\cite{CMM} is a covering lemma (see \cite[Prop. 5.3]{CMM}). To prove the 
analogous result for spaces that satisfy the complementary isoperimetric 
property we need a lemma. We recall that $B_0$ is the ball in the 
complementary isoperimetric property {\cIP} (see  Section \ref{s: 
PPc}). \par
\begin{lemma} \label{l: precovering II}
Suppose that $A$ is a open subset of $M$ such that 
$A\cap \Bar B_0$ is contained in $A_t$ for some $t$ in $\BR^+$.
Then
$$
\mu(A_t) \geq \bigl(1-\e^{-{\IcM} t/2}\bigr) \, \mu(A)/2. 
$$
\end{lemma}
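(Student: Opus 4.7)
My plan is to split $A$ as a disjoint union $A_1 \cup A_2$, with $A_1 := A \cap \Bar B_0$ and $A_2 := A \setminus \Bar B_0$, and argue by cases according to which of the two pieces has the larger measure. Note that $A_1 \subseteq A_t$ by hypothesis, and that $A_2$ is open (since $A$ is open and $\Bar B_0$ is closed) and contained in $M \setminus \Bar B_0$, so that Proposition~\ref{p: PPc}(i) is available on $A_2$.

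If $\mu(A_1) \geq \mu(A)/2$, the hypothesis gives immediately
$\mu(A_t) \geq \mu(A_1) \geq \mu(A)/2 \geq (1 - \e^{-\IcM t/2})\mu(A)/2$.
Otherwise $\mu(A_2) > \mu(A)/2$, and applying Proposition~\ref{p: PPc}(i) to $A_2$ with parameter $t/2$ yields
$$
\mu\bigl((A_2)_{t/2}\bigr) \geq \bigl(1 - \e^{-\IcM t/2}\bigr)\, \mu(A_2) > \bigl(1 - \e^{-\IcM t/2}\bigr)\, \mu(A)/2.
$$
It then suffices to establish the inclusion $(A_2)_{t/2} \subseteq A_t$. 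Fix $x \in (A_2)_{t/2}$ and choose approximating points $y_n \in A_2^c = A^c \cup \Bar B_0$ with $\rho(x, y_n) \to \rho(x, A_2^c) \leq t/2$; if some subsequence lies in $A^c$ then $\rho(x, A^c) \leq t/2$ and $x \in A_t$, while if the $y_n$ eventually lie in $\Bar B_0$ they either belong to $\Bar B_0 \cap A^c$ (again placing $x$ in $A_t$) or to $A_1 \subseteq A_t$.

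The delicate step, and the main obstacle, is this last sub-case in which the $y_n$ all lie in $A_1$. From $y_n \in A_t$ one has $\rho(y_n, A^c) \leq t$, but the naive triangle inequality $\rho(x, A^c) \leq \rho(x, y_n) + \rho(y_n, A^c) \leq t/2 + t = 3t/2$ is too weak to conclude $x \in A_t$. The refinement I would look for exploits the geometry of the nearest-point projection onto $\Bar B_0$: a realizing $y$ sits on the frontier of $\Bar B_0$, and one can leverage the hypothesis $A_1 \subseteq A_t$ to produce a point of $A^c$ within distance $t$ of $x$ itself rather than of $y$. A cleaner alternative, which avoids this point-by-point analysis, is to apply Proposition~\ref{p: PPc}(i) directly to the open set $A^t := A \setminus A_t$ (which by hypothesis is contained in $M \setminus \Bar B_0$), and to convert the resulting exponential decay bound on the measure of $A^t$ into the required lower bound on $\mu(A_t) = \mu(A) - \mu(A^t)$.
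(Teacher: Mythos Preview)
Your decomposition $A = A_1 \cup A_2$ and the appeal to Proposition~\ref{p: PPc}(i) on $A_2$ are exactly the paper's strategy, and your identification of the obstacle is accurate. But neither of your proposed fixes closes the gap. The inclusion $(A_2)_{t/2} \subseteq A_t$ is false in general: in the problematic sub-case the hypothesis gives only $\rho(y,A^c)\le t$ for $y\in A_1$, so the triangle inequality cannot do better than $\rho(x,A^c)\le 3t/2$, and no nearest-point-projection refinement changes this, since nothing in the hypothesis controls how deep inside $A_t$ the points of $A_1$ lie. Your alternative --- applying Proposition~\ref{p: PPc}(i) to $A^t$ --- yields $\mu\bigl((A^t)_s\bigr)\geq (1-\e^{-\IcM s})\,\mu(A^t)$, a lower bound on the measure of a \emph{subset} of $A^t$; it carries no information bounding $\mu(A^t)$ in terms of $\mu(A)$, so the promised ``conversion'' step has no content.

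The paper's resolution is simply not to fight the triangle inequality. It drops the case split, applies Proposition~\ref{p: PPc}(i) to $A_2=A\cap\Bar B_0^c$ with parameter $t$ (not $t/2$), and proves the weaker inclusion $(A_2)_t\subseteq A_{2t}$: if $x\in (A_2)_t\setminus A_t$ then $\rho(x,A^c)>t$ forces $\rho(x,\Bar B_0)\le t$, so there is $y\in A\cap\Bar B_0\subseteq A_t$ with $\rho(x,y)<t$, whence $\rho(x,A^c)<2t$. One then writes
\[
\mu(A)=\mu(A_1)+\mu(A_2)\le \mu(A_t)+(1-\e^{-\IcM t})^{-1}\mu\bigl((A_2)_t\bigr)
\le \frac{2-\e^{-\IcM t}}{1-\e^{-\IcM t}}\,\mu(A_{2t}),
\]
and the factor $1/2$ in the exponent of the stated bound is precisely the cost of the doubling $t\mapsto 2t$ incurred here. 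In short: your ``naive'' estimate $t/2+t$ was the right computation all along --- you should accept the loss and land in $A_{2t}$ rather than trying to squeeze into $A_t$.
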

\begin{proof}
First we prove that $\bigl(A\cap \Bar B_0^c\bigr)_t$  
is contained in $A_{2t}$.

Indeed, suppose that $x$ is in $\bigl(A\cap \Bar B_0^c\bigr)_t$.
Then either $x$ is in $A_t$, hence in $A_{2t}$, or
$x$ is in $\bigl(A\cap \Bar B_0^c\bigr)_t \setminus A_t$.
In the latter case 
$x$ is in $A\cap \Bar B_0^c$, and $\rho(x,A^c) > t$.
Furthermore $\rho(x,B_0) \leq t$, for otherwise the ball
$B(x,t)$ would be contained in $A\cap \Bar B_0^c$, 
i.e., $\rho(x,A^c\cup B_0)>t$,
contradicting the fact that $x$ is in $\bigl(A\cap \Bar B_0^c\bigr)_t$.  

Therefore the ball $B(x,t)$ is contained in $A$ and there
exists a point $y$ in $A\cap \Bar B_0$ such that $\rho(x,y) < t$. 
By assumption $y$ is in $A_t$, whence
$$
\rho(x,A^c) < \rho (x,y) + \rho (y,A^c) < 2t,
$$
as required. 

Now,
$$
\begin{aligned}
\mu(A)
& =    \mu (A\cap \Bar B_0) + \mu (A\cap \Bar B_0^c)  \\
& \leq \mu (A_t) + \bigl(1-\e^{-{\IcM} t}\bigr)^{-1} \, 
     \mu \bigl((A\cap \Bar B_0^c)_t\bigr)  \\
& \leq \mu (A_{2t}) + \bigl(1-\e^{-{\IcM} t}\bigr)^{-1} \, 
     \mu (A_{2t})  \\
& =  \frac{2-\e^{-{\IcM} t}}{1-\e^{-{\IcM} t}} \, 
     \mu (A_{2t}), 
\end{aligned}
$$
fro{}m which the desired estimate follows directly.
\end{proof}
 \begin{lemma} \label{l: covering II}
 Suppose that $\nu$ is an integer. For every $\kappa$  in $\BR^+$, every  
open subset $A$  of $M$ such
that $A\cap \Bar B_0 \subseteq A_{\kappa}$ and every collection $\cC$ is 
of dyadic cubes of resolution at least $\nu$
such that $\,\bigcup_{Q\in \cC} Q =A$, there exist mutually
disjoint cubes $Q_1,\ldots,Q_k$ in $\cC$ such that
\begin{enumerate}
\item[\itemno1]
$\sum_{j=1}^k \mu (Q_j) \geq \bigl(1-\e^{-{\IcM}\, \kappa/2}\bigr) \,
\mu (A)/4$;
\item[\itemno2]
$ \rho (Q_j, A^c) \leq \kappa$
for every $j$ in $\{1,\ldots, k\}$.
\end{enumerate}
\end{lemma}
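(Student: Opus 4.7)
The plan is to combine Lemma~\ref{l: precovering II} with a straightforward maximality argument for dyadic cubes. Since the cubes in $\cC$ all have resolution at least $\nu$, and the dyadic structure guarantees that distinct cubes are either disjoint or comparable by inclusion, I can pass to the subcollection $\cC^*\subseteq\cC$ of elements maximal with respect to inclusion. Any ascending chain of dyadic cubes starting from some $Q\in \cC$ of resolution $k\geq\nu$ can only rise to resolution $\nu$, hence has length at most $k-\nu+1$; so $\cC^*$ is nonempty, every $Q\in\cC$ sits inside a unique maximal $Q^{*}\in\cC^{*}$, the cubes in $\cC^{*}$ are pairwise disjoint by Theorem~\ref{t: dyadic cubes}~\rmii, and
$$
\bigcup_{Q\in\cC^{*}} Q = \bigcup_{Q\in\cC} Q = A
$$
up to a set of measure zero.

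Next, I single out the cubes in $\cC^{*}$ which are near the boundary. Define
$$
\cC^{**} = \{\, Q \in \cC^{*} : Q \cap A_{\kappa} \neq \emptyset \,\}.
$$
Any $Q\in\cC^{**}$ contains some point $x$ with $\rho(x,A^c)\leq \kappa$, so $\rho(Q,A^c)\leq\kappa$, giving condition~\rmii\ automatically. Moreover, since the cubes in $\cC^{*}$ are disjoint and their union covers $A$ up to null sets, every point of $A_{\kappa}$ lies in some $Q\in\cC^{**}$, and therefore
$$
\sum_{Q \in \cC^{**}} \mu(Q) \geq \mu(A_{\kappa}).
$$
By the hypothesis $A\cap \Bar B_0\subseteq A_{\kappa}$, Lemma~\ref{l: precovering II} (applied with $t=\kappa$) yields
$$
\mu(A_{\kappa}) \geq \bigl(1 - \e^{-\IcM\, \kappa/2}\bigr)\, \mu(A)/2 .
$$

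Finally, enumerate $\cC^{**}$ as $\{R_j\}_{j\in\BN}$ (if $\cC^{**}$ is already finite, there is nothing to do) and choose $k$ so large that $\sum_{j=1}^{k}\mu(R_j) \geq \tfrac12 \sum_{j}\mu(R_j)$. The cubes $Q_1=R_1, \ldots, Q_k=R_k$ are then mutually disjoint, satisfy $\rho(Q_j,A^c)\leq\kappa$ for every $j$, and
$$
\sum_{j=1}^{k}\mu(Q_j) \geq \tfrac12\,\mu(A_{\kappa})\geq \bigl(1 - \e^{-\IcM\,\kappa/2}\bigr)\,\mu(A)/4,
$$
which is exactly~\rmi. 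The only delicate point is to verify that maximal cubes really exist and cover $A$ modulo null sets; this is what forces the restriction to resolutions $\geq \nu$ in the statement, since otherwise ascending chains of dyadic cubes could be infinite. Everything else reduces to an application of Lemma~\ref{l: precovering II} and the familiar truncation of a convergent series, with the factor $1/4$ (rather than $1/2$) accounting for the finite-truncation step.
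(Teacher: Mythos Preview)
Your proof is correct and follows essentially the same approach as the paper, which simply refers to \cite[Proposition~3.5]{CMM} with Lemma~\ref{l: precovering II} substituted for the corresponding isoperimetric estimate; the maximal-cube selection, the choice of cubes meeting $A_\kappa$, and the finite truncation are exactly the ingredients of that argument. One small remark: since $\bigcup_{Q\in\cC}Q=A$ exactly by hypothesis, your maximal cubes cover $A$ exactly (not just up to null sets), which slightly simplifies the reasoning.
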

\begin{proof}
 
The proof is almost \emph{verbatim} the same as the proof of
\cite[Proposition~3.5]{CMM}. The only difference is that we use Lemma 
\ref{l: precovering II} in the proof of \rmi.
\end{proof}

\begin{remark}
Observe that in Remark~\ref{r: Ic} we may substitute $B_0$ with any
ball containing $B_0$. Therefore we may assume that $r_{B_0} \geq
C_1\, \de^2$.
\end{remark}

The second technical ingredient is a relative distributional inequality for the 
\emph{noncentred dyadic maximal function}
\begin{equation} \label{f: HL}
\cM_2 f(x)
= \sup_{Q} \frac{1}{\mu (Q)} \int_Q \mod{f} \wrt \mu
\quant x \in M,
\end{equation}
where the supremum is taken over all dyadic cubes of
resolution $\geq 2$ that contain $x$, and the local sharp function
$$
f^{\sharp,b}(x)
= \sup_{B\in\cB_b (x)} \frac{1}{\mu (B)} \int_B \mod{f-f_B} \wrt \mu
\quant x \in M.
$$

Observe that $f$ is in $BMO(\mu)$ if and only if $f\in L^1(\mu)$ and
$\norm{f^{\sharp,b}}{\infty}$ is finite for some (hence for all)
$b>R_0/(1-\beta)$. \par
Note that the maximal operator $\cM_2$ is of weak type $1$. We denote 
by $\opnorm{\cM_2}{1;1,\infty}$ its weak type $1$ quasi norm. \par
For every $\alpha>0$  denote by $A(\alpha)$ and $S(\alpha)$
the level sets $\{\cM_{2}f>\alpha\}$ and $\{f^{\sharp,b'}>\alpha\}$
respectively. Thus, for $\alpha$ and $\epsilon>0$ 
$$
\set{\cM_2f>\alpha,\ f^{\sharp,b'}\le\epsilon\alpha}=A(\alpha)\cap S
(\epsilon\alpha)^c.
$$\par 
The following lemma is the analogue of \cite[Lemma 7.2]{CMM} for  
spaces of finite measure that satisfy the complementary isoperimetric 
property.
\begin{lemma} \label{l: rdi II}
Let $B_0$ be as in Remark~\ref{r: Ic}, with $r_{B_0}\geq C_1\,
\de^2$. Define constants $b'$, $\si$ and $D$ by
$$
b' = 2 C_1+C_0, \qquad \si = \bigl(1-\e^{-{\IcM}\,
C_1\de^2/2}\bigr)/4 \qquad\hbox{and}\qquad D =D_{b'/a_0,a_0},
$$
where $a_0$, $C_1$ and $\de$ are as in Theorem~\ref{t: dyadic
cubes}, and $D_{b'/a_0,a_0}$ is defined in Remark~\ref{r: geom I}. 
Denote by $\om$ the number
$$
\inf \Bigl\{\mu(Q): Q \in \cQ^2, Q\cap \Bar B_0 \neq \emptyset \Bigr\},
$$
and by $\fM$ a constant $> \opnorm{\cM_2}{1;1,\infty}/\om$.
Then for every $\eta'$ in $(0,1)$, for all  positive $\vep<(1-\eta')/(2D)$,
and for every $f$ in $\lu{\mu}$
$$
\mu \bigl(A({\al})\cap S({\vep\al})^c \bigr) \leq \eta\, \mu
\bigl(A({\eta'\al}) \bigr) \quant \al\geq \frac{\fM}{\eta'}\,
\norm{f}{\lu{\mu}}
$$
where
\begin{equation}\label{eta}
\eta=
1-\si + \smallfrac{2\vep \,D}{\si \,(1-\eta')}.
\end{equation}
\end{lemma}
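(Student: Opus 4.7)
The plan is to follow the scheme of the Fefferman--Stein-type relative distributional inequality proved in \cite[Lemma 7.2]{CMM} for the infinite-measure case, with the covering step replaced by the finite-measure covering Lemma~\ref{l: covering II}. The extra geometric work is precisely what forces the restriction $\alpha\ge\fM\,\norm{f}{\lu\mu}/\eta'$ and brings the constants $\omega$, $\fM$ into play.

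First, the weak-type $(1,1)$ bound for $\cM_2$ together with the hypothesis on $\alpha$ yields
$$
\mu(A(\eta'\alpha))\le \opnorm{\cM_2}{1;1,\infty}\,\norm{f}{\lu\mu}/(\eta'\alpha)<\omega.
$$
Decompose $A(\eta'\alpha)$, up to null sets, into a disjoint union $\bigsqcup_i R_i$ of maximal dyadic cubes at resolution $\ge 2$ satisfying $\mu(R_i)^{-1}\int_{R_i}\mod{f}\wrt\mu>\eta'\alpha$. Because $\mu(A(\eta'\alpha))<\omega$, no such maximal bad cube of resolution $2$ can meet $\bar B_0$, so every $R_i$ with $R_i\cap\bar B_0\neq\emptyset$ has resolution $\ge 3$; its dyadic parent $R_i^p$ has resolution $\ge 2$, diameter at most $C_1\delta^2$, and is not bad by maximality. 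A weighted-average argument shows $R_i^p$ cannot be entirely covered by bad sub-cubes (otherwise its own average would exceed $\eta'\alpha$), so $R_i^p$ contains points of $A(\eta'\alpha)^c$. This gives $A(\eta'\alpha)\cap\bar B_0\subseteq A(\eta'\alpha)_{C_1\delta^2}$, the geometric hypothesis of Lemma~\ref{l: covering II}.

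Applying Lemma~\ref{l: covering II} with $A=A(\eta'\alpha)$, $\kappa=C_1\delta^2$, and $\cC=\{R_i\}$ produces disjoint cubes $Q_1,\dots,Q_k$ with $\sum_j\mu(Q_j)\ge\sigma\,\mu(A(\eta'\alpha))$ and $\rho(Q_j, A(\eta'\alpha)^c)\le C_1\delta^2$. Splitting
$$
\mu\bigl(A(\alpha)\cap S(\epsilon\alpha)^c\bigr)\le \mu\bigl(A(\eta'\alpha)\setminus\textstyle\bigsqcup_j Q_j\bigr)+\sum_j\mu\bigl(Q_j\cap A(\alpha)\cap S(\epsilon\alpha)^c\bigr),
$$
the first term is bounded by $(1-\sigma)\mu(A(\eta'\alpha))$, accounting for the $(1-\sigma)$ contribution to $\eta$. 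For each $Q_j$ with $Q_j\cap S(\epsilon\alpha)^c\neq\emptyset$, pick $x_j$ in this intersection and a nearby $y_j\notin A(\eta'\alpha)$ with $\rho(y_j, Q_j)\le C_1\delta^2$. Choose a ball $B_j\in\cB_{b'}(x_j)$ containing both $Q_j$ and a resolution-$2$ dyadic cube $R$ with $y_j\in R$; the definition $b'=2C_1+C_0$ is tailored for this. Since $\cM_2 f(y_j)\le\eta'\alpha$, the average $\mod{f}_R\le\eta'\alpha$, while the sharp-function bound gives $\int_{B_j}\mod{f-f_{B_j}}\wrt\mu\le\epsilon\alpha\,\mu(B_j)$. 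Comparing $f_{B_j}$ to $f_R$ via local doubling with constant $D=D_{b'/a_0,a_0}$ yields $\mod{f_{B_j}}\le\eta'\alpha+D\epsilon\alpha<(1+\eta')\alpha/2$ under the hypothesis $\epsilon<(1-\eta')/(2D)$. For $x\in Q_j\cap A(\alpha)$, maximality forces the realizing dyadic cube $Q_x$ to lie inside $Q_j\subseteq B_j$, and hence $\cM_2\bigl((f-f_{B_j})\One_{B_j}\bigr)(x)>\alpha-\mod{f_{B_j}}>(1-\eta')\alpha/2$. The weak-type $(1,1)$ bound for $\cM_2$ applied to $(f-f_{B_j})\One_{B_j}$ then produces a local estimate proportional to $\epsilon\,\mu(B_j)/(1-\eta')$. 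Summing in $j$ and combining with the lower bound $\sum_j\mu(Q_j)\ge\sigma\mu(A(\eta'\alpha))$ (which is what introduces the factor $1/\sigma$ in the final constant) delivers the remaining $2\epsilon D/(\sigma(1-\eta'))$ contribution to $\eta$.

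The principal obstacle is the geometric step of establishing $A(\eta'\alpha)\cap\bar B_0\subseteq A(\eta'\alpha)_{C_1\delta^2}$, which has no analogue in the infinite-measure proof: it relies on the combination of the smallness estimate $\mu(A(\eta'\alpha))<\omega$ (to rule out resolution-$2$ cubes meeting $\bar B_0$) with the subtle averaging fact that the non-bad dyadic parent of a maximal bad cube cannot be completely tiled by bad sub-cubes. The remaining work amounts to careful constant-chasing through the Fefferman--Stein scheme, with the doubling parameter $D=D_{b'/a_0,a_0}$ and the selection parameter $\sigma$ appearing from Lemma~\ref{l: covering II}.
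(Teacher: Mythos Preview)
Your proposal is correct and follows the same route as the paper: verify the geometric hypothesis of Lemma~\ref{l: covering II} for the level set $A(\eta'\alpha)$, then run the argument of \cite[Lemma~7.2]{CMM} with that lemma in place of \cite[Prop.~3.5]{CMM}. The one notable difference is that the paper's proof of the geometric step $A(\eta'\alpha)\cap\bar B_0\subseteq A(\eta'\alpha)_{C_1\delta^2}$ is much shorter than yours: if $x\in\bigl(A(\eta'\alpha)\bigr)^{C_1\delta^2}$, the resolution-$2$ dyadic cube $Q$ containing $x$ has diameter at most $C_1\delta^2$ and is therefore contained in $A(\eta'\alpha)$, whence $\mu(Q)\le\mu\bigl(A(\eta'\alpha)\bigr)<\omega$, which forces $Q\cap\bar B_0=\emptyset$ by the very definition of $\omega$---no maximal-cube decomposition or parent-cube averaging argument is needed.
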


\begin{proof}
First we prove that $\om$ is strictly positive. Indeed, suppose that
$Q_{\al}^2$ is a dyadic cube of resolution $2$ with nonempty
intersection with~$B_0$; the cube $Q_{\al}^2$ contains the ball
$B(z_{\al}^2,a_0 \, \de^2)$ by Theorem~\ref{t: dyadic cubes}~\rmv\
and is contained in the ball $2B_0$ by the triangle inequality.

Denote by $D$ the doubling constant $D_{a_0\de^2/(2r_{B_0}),a_0\de^2}
$.
By the local doubling property
$$
\begin{aligned}
\mu(2B_0)
& \leq D \, \mu\bigl(B(z_{\al}^2,a_0\, \de^2)\bigr) \\
& \leq D \, \mu\bigl(Q_{\al}^2\bigr).
\end{aligned}
$$
Therefore
$
\om \geq D^{-1} \, \mu(2B_0)>0,
$
as required.\par
For the rest of this proof we shall write $\kappa$ instead of $C_1\,
\de^2$.
Suppose that $\al \geq \fM \, \norm{f}{\lu{\mu}}/\eta'$. Since $f$ is
in $\lu{\mu}$,
\begin{equation} \label{f: level set}
\begin{aligned}
\mu(A(\eta'\al))
& \leq \frac{\opnorm{\cM_2}{1;1,\infty}}{\eta'\, \al} \, \norm{f}{\lu{\mu}} \\
& \leq \frac{\opnorm{\cM_2}{1;1,\infty}}{\fM}   \\
& <    \om.
\end{aligned}
\end{equation}
We claim that $\bigl(A(\eta'\al)\bigr)^{\kappa}=\set{x\in A(\eta'\al): \rho(x,A
(\eta'\al)^c)>\kappa}$ is contained in
$\Bar B_0^c$. Indeed, if $x$ is in $\bigl(A(\eta'\al)\bigr)^\kappa$,
and $Q$ is the dyadic cube of resolution $2$ that contains $x$, then
$Q$ is contained in $A(\eta'\al)$ by the triangle inequality.
Therefore $\mu(Q) \leq \mu \bigl(A(\eta'\al)\bigr) < \om$ by
(\ref{f: level set}). Hence $x$ is not in $\Bar B_0$ by the
definition of $\om$. The claim proved above implies that
$A(\eta'\al)\cap \Bar B_0\subseteq \bigl(
A(\eta'\al)\bigr)_{\kappa}$.

The rest of the proof is  the same as that of
\cite[Lemma 7.2]{CMM}. The only difference is that we use Lemma \ref{l: 
covering II} instead of \cite[Prop. 5.3]{CMM}.
\end{proof}

Next,  we prove the analogue of \cite[Theorem~7.3]{CMM}.

\begin{theorem} \label{t: basic II}
For each $p$ is in $(1,\infty)$
there exists a positive constant $C$ such that
$$
\norm{f}{\lu{\mu}} + \norm{f^{\sharp,b'}}{\lp{\mu}}
\geq C\, \norm{f}{\lp{\mu}}
\quant f \in \lp{\mu}.
$$
\end{theorem}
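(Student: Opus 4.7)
My plan is to adapt the good-$\lambda$ strategy of \cite[Thm~7.3]{CMM}, with the finiteness of $\mu(M)$ providing a clean a priori bound that simplifies the truncation step.

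First I would invoke Lebesgue differentiation along the dyadic filtration of Theorem~\ref{t: dyadic cubes} to obtain $\mod{f(x)} \leq \cM_2 f(x)$ for almost every $x$, so the desired inequality reduces to showing
$$
\norm{\cM_2 f}{\lp{\mu}} \leq C\bigl(\norm{f}{1} + \norm{f^{\sharp,b'}}{p}\bigr).
$$
For each $\beta$ in $\BR^+$, I would introduce the truncated quantity
$$
I(\beta) = \int_0^\beta p\,\alpha^{p-1} \mu(A(\alpha))\wrt\alpha,
$$
which is automatically finite since $I(\beta) \leq \beta^p\,\mu(M) < \infty$. This a priori finiteness is what makes the absorption argument rigorous, and is precisely the advantage of the finite-measure setting.

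Next I would choose the parameters entering Lemma~\ref{l: rdi II}: fix $\eta'$ in $(0,1)$ close enough to $1$ that $(\eta')^p > 1 - \sigma/2$, and then fix $\epsilon > 0$ small enough that the constant $\eta = 1-\sigma + 2\epsilon D/[\sigma(1-\eta')]$ from that lemma satisfies $\eta < (\eta')^p$. This is possible because, with $\eta'$ fixed, $\eta \to 1 - \sigma$ as $\epsilon \to 0^+$. With $\alpha_0 = \fM\,\norm{f}{1}/\eta'$, I would split $I(\beta) = \int_0^{\alpha_0} + \int_{\alpha_0}^\beta$. The low part is bounded trivially by $\alpha_0^p\,\mu(M) \leq C\,\norm{f}{1}^p$. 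On $[\alpha_0,\beta]$, using the elementary decomposition $\mu(A(\alpha)) \leq \mu\bigl(A(\alpha)\cap S(\epsilon\alpha)^c\bigr) + \mu(S(\epsilon\alpha))$, Lemma~\ref{l: rdi II}, and the substitution $\alpha\mapsto\alpha/\eta'$ in the first resulting integral, I would obtain
$$
\int_{\alpha_0}^\beta p\,\alpha^{p-1}\,\mu(A(\alpha))\wrt\alpha \leq \eta(\eta')^{-p}\, I(\beta) + \epsilon^{-p}\, \norm{f^{\sharp,b'}}{p}^p.
$$
Combining the two pieces and absorbing the $\eta(\eta')^{-p}I(\beta)$ term (which is legitimate precisely because $I(\beta) < \infty$), then letting $\beta\to\infty$ by monotone convergence, delivers $\norm{\cM_2 f}{p}^p \leq C\bigl(\norm{f}{1}^p + \norm{f^{\sharp,b'}}{p}^p\bigr)$, from which the theorem follows.

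The main obstacle is the parameter balance $\eta < (\eta')^p$ required for the absorption: one must exploit the explicit form of $\eta$ in Lemma~\ref{l: rdi II}, noting that its limit as $\epsilon\to 0$ is $1-\sigma$ independently of $\eta'$, while $(\eta')^p$ can be made arbitrarily close to $1$ by taking $\eta'$ near $1$. The finite-measure hypothesis enters twice, in the trivial bound on $I(\beta)$ that legitimizes absorption and in the structure of Lemma~\ref{l: rdi II} itself, where the restriction $\alpha\geq\fM\norm{f}{1}/\eta'$ forces the splitting at $\alpha_0$ and the low-frequency term $\norm{f}{1}$ in the final estimate.
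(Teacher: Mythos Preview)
Your proposal is correct and follows essentially the same good-$\lambda$ strategy as the paper: both split the distribution function at $\alpha_0=\fM\norm{f}{1}/\eta'$, apply Lemma~\ref{l: rdi II} on the high range, and choose $\eta'$ and $\vep$ so that $\eta(\eta')^{-p}<1$ (the paper takes the concrete choice $\eta'=(1-\sigma/4)^{1/p}$ and then $\vep$ small enough that $\eta\le 1-\sigma/2$). The only minor difference is that you truncate at level $\beta$ and use $\mu(M)<\infty$ to guarantee $I(\beta)<\infty$ before absorbing, whereas the paper works directly with $\norm{\cM_2 f}{p}^p$, implicitly relying on the $L^p$ boundedness of $\cM_2$ for the absorption; both justifications are valid.
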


\begin{proof}
Observe that it suffices to show that
\begin{equation} \label{f: sharp max II}
\norm{f}{\lu{\mu}} + \norm{f^{\sharp,b'}}{\lp{\mu}}
\geq C\, \norm{\cM_{2}f}{\lp{\mu}},
\end{equation}
because $\cM_{2}f \geq \mod{f}$ by the differentiation theorem
of the integral.

Let $\sigma$ and  $\fM$ be as in the statement of Lemma~\ref{l: rdi II}. Fix 
$\eta'=(1-\sigma/4)^{1/p}$ and let $\eta$ be as in (\ref{eta}).
Denote by $\xi$ the number $\fM\, \norm{f}{\lu{\mu}}/\eta'$.
Then
$$
\begin{aligned}
\quad\norm{\cM_{2}f}{p}{^{p}} 
 &= p \ioty \al^{p-1}\, \mu \bigl(A(\al) \bigr) \wrt \al  \\
& = p \int_{\xi}^\infty \al^{p-1}
        \, \bigl[\mu \bigl(A(\al) \cap S(\vep\al)^c\bigr)
      +  \mu \bigl(A(\al) \cap S(\vep\al)\bigr) \bigr] \wrt \al  \\
&\hskip3truecm      + p \int_0^\xi \al^{p-1} \, \mu\bigl(A(\al)\bigr) \wrt \al,
\end{aligned}
$$
so that, by Lemma~\ref{l: rdi II},
$$
\begin{aligned}
\norm{\cM_{2}f}{p}{^{p}} 
& \leq  p\, \eta \ioty \al^{p-1}
        \, \mu \bigl(A(\eta'\al)\bigr) \wrt \al
      + p \ioty \al^{p-1}
        \, \mu \bigl(S(\vep\al)\bigr) \wrt \al\\
&\hskip3truecm      + p\,\mu(M) \int_0^\xi \al^{p-1}  \wrt \al \\
& =  p\, \eta\, \eta'^{-p} \ioty \gamma^{p-1}
        \, \mu \bigl(A(\gamma)\bigr) \wrt \gamma
      + p\,  \vep^{-p} \ioty \gamma^{p-1}
        \, \mu \bigl(S(\gamma)\bigr) \wrt \gamma
      + \mu(M)\,\xi^p    \\
& \leq  \eta\, \eta'^{-p} \, \norm{\cM_{2}f}{p}{^{p}}  + \vep^{-p} \,
        \norm{f^{\sharp,b'}}{p}^p
       +\mu(M) \frac{\fM^p}{(\eta')^p} \norm{\cM_{2}f}{1}{^{p}} .
\end{aligned}
$$
Now we choose $\vep$ small enough so that $\eta \leq 1-\si/2$.
Therefore $\eta\, \eta'^{-p}<1$ and (\ref{f: sharp max II}) follows.
\end{proof}
If $X$ and $Y$ are Banach spaces and  $\te$ is in
$(0,1)$, we denote by $(X,Y)_{[\te]}$ the complex interpolation
space between $X$ and $Y$ with parameter $\te$.\par
Now that all the groundwork has been laid, we may proceed to state the 
interpolation theorems without further ado. The proofs are adaptations of classical results. We refer the reader to \cite[Theor. 7.4 and Theor. 7.5]
{CMM} for more details.
\begin{theorem} \label{t: interpolation}
Suppose that $\te$ is in $(0,1)$.  The following hold:
\begin{enumerate}
\item[\itemno1]
if $p_\te$ is $2/(1-\te)$, then
$\bigl(\ld{\mu},BMO(\mu)\bigr)_{[\te]} = L^{p_\te}(\mu)$;
\item[\itemno2]
if $p_\te$ is $2/(2-\te)$, then
$\bigl(\hu{\mu},\ld{\mu}\bigr)_{[\te]} = L^{p_\te}(\mu)$.
\end{enumerate}
\end{theorem}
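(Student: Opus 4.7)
The plan is to adapt the arguments of \cite[Theorems~7.4 and~7.5]{CMM} to the finite measure setting, using the sharp maximal function estimate Theorem~\ref{t: basic II} and the duality Theorem~\ref{t: duality} as the two main ingredients.

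For part~\rmi, I would prove the two inclusions separately. The embedding $L^{p_\theta}(\mu) \hookrightarrow \bigl(\ld{\mu}, BMO(\mu)\bigr)_{[\theta]}$ follows from an explicit analytic family construction: given a simple function $f$ with $\norm{f}{L^{p_\theta}(\mu)} = 1$, set
\[
F(z) = \mod{f}^{p_\theta(1-z)/2}\, \mathrm{sgn}(f), \qquad 0 \leq \Re z \leq 1,
\]
so that $F(\theta) = f$. One checks that $\norm{F(it)}{\ld{\mu}} = 1$ uniformly in $t$, and that $F(1+it)$ is bounded pointwise by $1$ on the support of $f$ and vanishes elsewhere; since $\mu(M)<\infty$, we have $F(1+it) \in \ly{\mu} \subset \lu{\mu}$, with its $BMO(\mu)$-norm controlled uniformly in $t$ (oscillation is bounded by $2\norm{F(1+it)}{\infty}$, and $\norm{F(1+it)}{\lu{\mu}} \leq \mu(M)$). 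A density argument on simple functions concludes this direction.

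The reverse embedding $(\ld{\mu}, BMO(\mu))_{[\theta]} \hookrightarrow L^{p_\theta}(\mu)$ invokes Theorem~\ref{t: basic II}. Given $f$ in the interpolation space with an analytic extension $F$ satisfying the appropriate boundary bounds in $\ld{\mu}$ and $BMO(\mu)$, the key facts are that the sublinear map $g \mapsto g^{\sharp,b'}$ is bounded on $\ld{\mu}$ (being dominated by the local Hardy--Littlewood maximal function over $\cB_{b'}$), and satisfies $\norm{g^{\sharp,b'}}{\infty} \asymp N_{b'}^1(g)$ so is bounded from $BMO(\mu)$ to $\ly{\mu}$. Moreover, since $F(z) - F(z)_B$ is holomorphic in $z$ for each fixed ball $B$, the function $z \mapsto F(z)^{\sharp,b'}(x)$ is subharmonic on the strip. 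A standard three-lines-type argument applied pointwise then yields $\norm{f^{\sharp,b'}}{L^{p_\theta}(\mu)} \leq C\, \norm{f}{(\ld{\mu},BMO(\mu))_{[\theta]}}$. Since $(\ld{\mu}, BMO(\mu))_{[\theta]} \hookrightarrow \lu{\mu}$ (using $\mu(M)<\infty$ and that both endpoint spaces embed in $\lu{\mu}$), Theorem~\ref{t: basic II} upgrades this to the desired $L^{p_\theta}$-bound on $f$ itself.

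Part~\rmii\ follows by duality. By Theorem~\ref{t: duality}, $(\hu{\mu})^* \cong BMO(\mu)$, while $(\ld{\mu})^* = \ld{\mu}$. Since $\hu{\mu} \cap \ld{\mu}$ is norm-dense in both endpoints and $\ld{\mu}$ is reflexive, Calder\'on's duality theorem for the complex method yields
\[
\bigl((\hu{\mu}, \ld{\mu})_{[\theta]}\bigr)^* = (BMO(\mu), \ld{\mu})_{[\theta]} = (\ld{\mu}, BMO(\mu))_{[1-\theta]} = L^{p_{1-\theta}}(\mu) = L^{p_\theta'}(\mu),
\]
where the third equality invokes part~\rmi. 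Since $L^{p_\theta}(\mu)$ is reflexive with the same dual, and both spaces embed naturally into $\hu{\mu} + \ld{\mu}$ with the embedding $\hu{\mu} \cap \ld{\mu} \hookrightarrow L^{p_\theta}(\mu)$ having dense image, one concludes $(\hu{\mu}, \ld{\mu})_{[\theta]} = L^{p_\theta}(\mu)$. The main obstacle is the reverse embedding in~\rmi: pinning down the subharmonicity / three-lines argument for $F(z)^{\sharp,b'}$ requires attention to the precise boundary identifications of the sharp maximal function, after which the remaining steps are routine.
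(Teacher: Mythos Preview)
Your overall plan coincides with the paper's, which simply defers to the classical arguments in \cite[Theorems~7.4 and~7.5]{CMM}: the explicit analytic family for the easy inclusion in \rmi, the sharp-function inequality (Theorem~\ref{t: basic II}) for the hard inclusion, and Calder\'on duality together with Theorem~\ref{t: duality} for \rmii. Your handling of the easy inclusion and of \rmii\ is fine.

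There is, however, a genuine gap in your proposed proof of the reverse embedding in \rmi. The subharmonicity of $z\mapsto F(z)^{\sharp,b'}(x)$ is correct, but a pointwise three-lines inequality for a subharmonic function yields only the Poisson bound
\[
f^{\sharp,b'}(x)\;\le\;\int_{\BR} P_0(\theta,t)\,F(it)^{\sharp,b'}(x)\wrt t
\;+\;\int_{\BR} P_1(\theta,t)\,F(1+it)^{\sharp,b'}(x)\wrt t.
\]
The second integral is uniformly bounded by the $BMO$ hypothesis, but the first is controlled only in $\ld{\mu}$ (via Minkowski), so this gives $f^{\sharp,b'}\in \ld{\mu}+\ly{\mu}$, not $L^{p_\theta}(\mu)$ for $p_\theta>2$. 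Subharmonicity of $\log F(z)^{\sharp,b'}(x)$ does not help either, since the resulting product bound involves $\sup_t F(it)^{\sharp,b'}(x)$, over which you have no $\ld{\mu}$ control.

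The standard remedy, and what the classical argument actually does, is to \emph{linearize} the sharp operator for the given $f$: for each $x$ choose a near-optimal ball $B(x)\in\cB_{b'}$ and a unimodular $\vep(x,\cdot)$, and set
\[
Th(x)=\frac{1}{\mu\bigl(B(x)\bigr)}\int_{B(x)}\vep(x,y)\,\bigl(h(y)-h_{B(x)}\bigr)\wrt\mu(y).
\]
Then $T$ is \emph{linear}, $|Th|\le h^{\sharp,b'}$ for all $h$, and $Tf\ge \tfrac12 f^{\sharp,b'}$. Since $|Th|$ is dominated by the local maximal function, $T:\ld{\mu}\to\ld{\mu}$; and $|Th|\le \bignorm{h^{\sharp,b'}}{\infty}$ gives $T:BMO(\mu)\to\ly{\mu}$. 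Complex interpolation of the linear operator $T$ now yields $T:\bigl(\ld{\mu},BMO(\mu)\bigr)_{[\theta]}\to\bigl(\ld{\mu},\ly{\mu}\bigr)_{[\theta]}=L^{p_\theta}(\mu)$, whence $\bignorm{f^{\sharp,b'}}{L^{p_\theta}(\mu)}\le 2\,\bignorm{Tf}{L^{p_\theta}(\mu)}\le C\,\norm{f}{(\ld{\mu},BMO(\mu))_{[\theta]}}$. Feeding this and the $\lu{\mu}$ bound into Theorem~\ref{t: basic II} completes the argument. Once this linearization is in place, the rest is indeed routine.
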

\begin{theorem}
Let $S$ denote the strip $\{z\in \BC: \Re z\in (0,1)\}$.
Suppose that $\{\cT_z\}_{z\in \bar S}$ is a family of uniformly bounded
operators on $\ld{\mu}$ such that $z\mapsto \int_{\BR^d} T_zf \, g \wrt \mu 
$
is holomorphic in $S$ and continuous in $\bar S$ for all
functions $f$ and $g$ in $\ld{\mu}$.
Further, assume that there exists a constant $A$ such that
$$
\opnorm{T_{is}}{\ld{\mu}} \leq A
\qquad\hbox{and}\qquad
\opnorm{T_{1+is}}{\ly{\mu};BMO({\mu})} \leq A.
$$
Then for every $\te$ in $(0,1)$ the operator $T_\te$ is bounded
on $L^{p_\te}({\mu})$, where $p_\te = 2/(1-\te)$, and
$$
\opnorm{T_\te}{L^{p_\te}({\mu})}
\leq A_\te,
$$
where $A_\te$ depends only on $A$ and on $\te$.
\end{theorem}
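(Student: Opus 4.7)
The plan is a Stein-type interpolation argument for the analytic family $\{\cT_z\}$. The key inputs beyond the classical scheme are the $\hu{\mu}$--$BMO(\mu)$ duality from Theorem~\ref{t: duality} and the complex interpolation identification $\bigl(\ld{\mu},\hu{\mu}\bigr)_{[\theta]}=L^{p'_\theta}(\mu)$, with $p'_\theta=2/(1+\theta)$, which follows from Theorem~\ref{t: interpolation}~\rmii\ via the symmetry $(X_0,X_1)_{[\theta]}=(X_1,X_0)_{[1-\theta]}$ of the complex method. By duality it then suffices to show that, for $f$ and $g$ in dense subsets (say, simple functions) of $L^{p_\theta}(\mu)$ and $L^{p'_\theta}(\mu)$ respectively,
$$
\Bigl|\int_M(\cT_\theta f)\,g\wrt\mu\Bigr|\le CA\,\norm{f}{p_\theta}\,\norm{g}{p'_\theta}.
$$

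To this end I would introduce two analytic families on $\OV S$. The first is the classical Stein function $F_z:=\norm{f}{p_\theta}\bigl|f/\norm{f}{p_\theta}\bigr|^{p_\theta(1-z)/2}\operatorname{sgn}f$, which satisfies $F_\theta=f$ and $\sup_s\norm{F_{is}}{2}=\sup_s\norm{F_{1+is}}{\infty}=\norm{f}{p_\theta}$. The second, $G_z$, is chosen so that $G_\theta=g$, $G_{is}$ is bounded in $\ld{\mu}$, and $G_{1+is}$ is bounded in $\hu{\mu}$, with boundary norms controlled by a constant multiple of $\norm{g}{p'_\theta}$; the existence of such a family is precisely the content of the interpolation identification stated above.

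Next, set $\Phi(z):=\int_M(\cT_zF_z)\,G_z\wrt\mu$. Since $\mu(M)<\infty$, both $F_z$ and $G_z$ lie in $\ld{\mu}$ for every $z\in\OV S$, so $\cT_zF_z\in\ld{\mu}$ and the integral converges. Using the holomorphy hypothesis on $\{\cT_z\}$ together with the analyticity of $F_z$ and $G_z$, one checks that $\Phi$ is holomorphic on $S$ and continuous on $\OV S$. On the line $\Re z=0$, Cauchy--Schwarz and the uniform $\ld{\mu}$-bound on $\cT_{is}$ give $|\Phi(is)|\le A\,\norm{F_{is}}{2}\,\norm{G_{is}}{2}$. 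On the line $\Re z=1$, since $\cT_{1+is}F_{1+is}\in BMO(\mu)\cap\ld{\mu}$ and $G_{1+is}\in\hu{\mu}\cap\ld{\mu}$, the integral coincides with the $\hu{\mu}$--$BMO(\mu)$ duality pairing, so Theorem~\ref{t: duality} and the hypothesis on $\cT_{1+is}$ yield $|\Phi(1+is)|\le CA\,\norm{F_{1+is}}{\infty}\,\norm{G_{1+is}}{\hu{\mu}}$. Hadamard's three-lines lemma applied to $\Phi$ then produces $|\Phi(\theta)|\le CA\,\norm{f}{p_\theta}\,\norm{g}{p'_\theta}$, which is the required estimate.

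The main obstacle lies in the construction of the $\hu{\mu}$-valued analytic family $G_z$ and in the verification of the holomorphy of $\Phi$ near $\Re z=1$, where $\cT_zF_z$ is only $BMO(\mu)$-valued in the limit. Both points depend crucially on Theorem~\ref{t: interpolation}~\rmii\ together with the uniform $\ld{\mu}$-boundedness of $\cT_z$, and can be handled by a routine, though technical, adaptation of the arguments developed for the infinite-measure setting in \cite[Thm~7.5]{CMM}.
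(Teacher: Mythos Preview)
Your proposal is correct and follows essentially the same route as the paper, which gives no details and simply refers to \cite[Thm~7.5]{CMM} and the classical scheme (cf.\ also \cite{CJ}); the Stein-type argument you sketch, pairing the explicit family $F_z$ with an $\hu{\mu}$-valued family $G_z$ obtained from Theorem~\ref{t: interpolation}~\rmii\ and invoking Theorem~\ref{t: duality} on the line $\Re z=1$, is exactly the adaptation intended. Note that one could equally well avoid the dualization and use Theorem~\ref{t: interpolation}~\rmi\ together with an abstract analytic-family interpolation result between the couples $(\ld{\mu},\ly{\mu})$ and $(\ld{\mu},BMO(\mu))$, but this is the same argument in dual form.
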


\section{Singular integrals}\label{singint}
In this section we state the analogue of Theorem 8.2 in \cite[Theor. 8.2]
{CMM}. 
Assume that $\cT$ is a bounded linear operator on $\ld{\mu}$ with kernel 
$k$; i.e. $k$ is a function on $M\times M$ which is locally integrable  off 
the diagonal in $M\times M$ and such
that for every function $f$ with support of finite measure
$$
\cT f(x) = \int_M k(x,y) \, f(y) \wrt \mu(y) \quant x \notin \supp
f.
$$

\begin{theorem} \label{t: singular integrals}
Suppose that $b$ is in $\BR^+$ and $b>R_0/(1-\be)$, where $R_0$ and
$\be$ appear in the definition of property \EB. Suppose that $\cT$
is a bounded operator on $\ld{\mu}$ and that its kernel~$k$ is
locally integrable off the diagonal of $M\times M$.  Let
$\upsilon_{k}$ and $\nu_{k}$ be defined by
$$
\upsilon_{k} = \sup_{B\in \cB_{b}} \sup_{x,x'\in B} \int_{(2B)^c}
\mod{k(x,y) - k(x',y)} \wrt \mu (y),
$$
and
$$
\nu_{k} = \sup_{B\in \cB_{b}} \sup_{y,y'\in B} \int_{(2B)^c}
\mod{k(x,y) - k(x,y')} \wrt {\mu }(x).
$$
The following hold:
\begin{enumerate}
\item[\itemno1]
if $\nu_{k}$ is finite, then $\cT$ extends to a bounded operator on
$\lp{\mu}$ for all $p$ in $(1,2]$ and from $H^1({\mu})$ to
$\lu{\mu}$. Furthermore, there exists a constant $C$ such that
$$
\opnorm{\cT}{H^1({\mu});\lu{\mu}} \leq C \, \bigl(\nu_{k} +
\opnorm{\cT}{\ld{\mu}}\bigr);
$$
\item[\itemno2]
if $\upsilon_{k}$ is finite, then $\cT$ extends to a bounded
operator on $\lp{\mu}$ for all $p$ in $[2,\infty)$ and from
$\ly{\mu}$ to $BMO({\mu})$. Furthermore, there exists a constant $C$
such that
$$
\opnorm{\cT}{\ly{\mu};BMO({\mu})} \leq C \, \bigl(\upsilon_{k} +
\opnorm{\cT}{\ld{\mu}}\bigr);
$$
\item[\itemno3]
if $\cT$ is self adjoint on $\ld{\mu}$ and $\nu_{k}$ is finite, then
$\cT$ extends to a bounded operator on $\lp{\mu}$ for all $p$ in
$(1,\infty)$, from $H^1({\mu})$ to $\lu{\mu}$ and from $\ly{\mu}$ to
$BMO({\mu})$.
\end{enumerate}
\end{theorem}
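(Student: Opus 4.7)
My plan is to prove (i) and (ii) by direct arguments on atoms and on $\ly{\mu}$-functions respectively, following the template of the infinite-measure result \cite[Thm~8.2]{CMM}. Part (iii) will then follow by combining (i) and (ii) with self-adjointness. The only genuinely new point absent from the infinite-measure analogue is the treatment of the exceptional atom in the $H^1(\mu)\to\lu{\mu}$ estimate.

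For (i), I would first establish a uniform bound $\norm{\cT a}{1}\le C\bigl(\nu_k+\opnorm{\cT}{\ld{\mu}}\bigr)$ on atoms. The exceptional atom $a=1/\mu(M)$ belongs to $\ld{\mu}$ with $\norm{a}{2}=\mu(M)^{-1/2}$ because $\mu(M)<\infty$, and Schwarz's inequality yields $\norm{\cT a}{1}\le\mu(M)^{1/2}\norm{\cT a}{2}\le\opnorm{\cT}{\ld{\mu}}$. For a standard $(1,\infty)$-atom $a$ supported in a ball $B\in\cB_b$, I would split $M=2B\cup(2B)^c$: on $2B$, Schwarz's inequality, $\ld{\mu}$-boundedness, and local doubling give $\int_{2B}\mod{\cT a}\wrt\mu\le D_b^{1/2}\opnorm{\cT}{\ld{\mu}}$; on $(2B)^c$ I would exploit the cancellation $\int a\wrt\mu=0$ to write $\cT a(x)=\int_B[k(x,y)-k(x,c_B)]\,a(y)\wrt\mu(y)$, after which Fubini and the $\nu_k$ bound applied to $y,c_B\in B$ yield $\int_{(2B)^c}\mod{\cT a}\wrt\mu\le\nu_k\norm{a}{1}\le\nu_k$. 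Summing over any atomic decomposition of a finite linear combination of atoms and extending by continuity from the dense subspace $H^{1}_{\mathrm{fin}}(\mu)\subset H^1(\mu)$ yields the claimed boundedness. Complex interpolation via Theorem~\ref{t: interpolation}~\rmii\ and the Riesz--Thorin identification $(\lu{\mu},\ld{\mu})_{[\te]}=L^{p_\te}(\mu)$ then produces the $\lp{\mu}$ bounds for $p\in(1,2)$.

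For (ii), given $f\in\ly{\mu}$ with $\norm{f}{\infty}\le 1$, I would verify the $BMO(\mu)$ estimate by testing directly on balls. Fix $B\in\cB_b$ and decompose $f=f\charfn{2B}+f\charfn{(2B)^c}=:f_1+f_2$. For $f_1$, Schwarz, $\ld{\mu}$-boundedness, and local doubling deliver $\frac{1}{\mu(B)}\int_B\mod{\cT f_1}\wrt\mu\le D_b^{1/2}\opnorm{\cT}{\ld{\mu}}$. For $f_2$, the definition of $\upsilon_k$ applied to $x,x'\in B$ yields the pointwise estimate $\mod{\cT f_2(x)-\cT f_2(x')}\le\upsilon_k$, which on averaging in $x'$ over $B$ implies $\frac{1}{\mu(B)}\int_B\mod{\cT f_2-(\cT f_2)_B}\wrt\mu\le\upsilon_k$. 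Adding these two estimates and using $(\cT f)_B=(\cT f_1)_B+(\cT f_2)_B$ controls $N_b^1(\cT f)$. The $\lu{\mu}$ component of $\norm{\cT f}{BMO(\mu)}$ is automatic: the inclusion $\ly{\mu}\subset\ld{\mu}$ (valid because $\mu(M)<\infty$) together with $\ld{\mu}$-boundedness gives $\norm{\cT f}{1}\le\mu(M)\opnorm{\cT}{\ld{\mu}}\norm{f}{\infty}$. Interpolating this $\ly{\mu}\to BMO(\mu)$ bound with $\ld{\mu}\to\ld{\mu}$ via Theorem~\ref{t: interpolation}~\rmi\ then produces the $\lp{\mu}$ bounds for $p\in(2,\infty)$.

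Part (iii) is immediate: self-adjointness of $\cT$ on $\ld{\mu}$ yields $k(x,y)=\overline{k(y,x)}$ off the diagonal, whence $\upsilon_k=\nu_k$, and (i) and (ii) apply simultaneously. The main delicate point I foresee is not any individual estimate --- these are standard Calder\'on--Zygmund adaptations --- but rather the bookkeeping required in (i) to separately accommodate the exceptional atom, which is the structural feature distinguishing the finite-measure theory from the infinite-measure analogue; the extra $\lu{\mu}$ component of $\norm{\cdot}{BMO(\mu)}$ in the present setting also forces a small side verification that $\norm{\cT f}{1}<\infty$ in (ii), which is however painless thanks to the finite measure hypothesis.
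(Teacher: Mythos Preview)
Your proposal is correct and follows essentially the same approach as the paper, which simply states that the proof is \emph{almost verbatim} the same as \cite[Thm~8.2]{CMM} and omits details. You have correctly identified and handled the two genuinely new points in the finite-measure setting---the exceptional atom in part~\rmi\ and the $\lu{\mu}$-component of the $BMO(\mu)$ norm in part~\rmii---both of which reduce immediately to the $\ld{\mu}$-boundedness of $\cT$ together with $\mu(M)<\infty$.
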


\begin{proof}
The proof is \emph{almost verbatim} the same as the proof of
\cite[Thm~8.2]{CMM}, and is omitted.
\end{proof}

\begin{remark} \label{r: singular integrals}
It is worth observing that in the case where $M$ is a Riemannian
manifold and the kernel $k$ is ``regular'', then the condition
$\upsilon_{k} <\infty$ of Theorem~\ref{t: singular integrals}~\rmi\
may be replaced by the condition $\upsilon_{k}'<\infty$, where
\begin{equation} \label{f: singular integrals}
\upsilon'_{k} = \sup_{B\in \cB_{b}} \, r_B \sup_{x\in B}
\int_{(2B)^c} \mod{\nabla_x k(x,y)} \wrt \mu (y).
\end{equation}
Similarly, the condition $\nu_{k} <\infty$ of Theorem~\ref{t:
singular integrals}~\rmii\ may be replaced by the condition
$\nu_{k}'<\infty$, where
\begin{equation} \label{f: singular integrals II}
\nu'_{m} = \sup_{B\in \cB_{b}} \, r_B \sup_{y\in B} \int_{(2B)^c}
\mod{\nabla_y k(x,y)} \wrt {\mu }(x).
\end{equation}
\end{remark}

\section{Riemannian manifolds}\label{s: Cheeger}

Let $(M,\rho,\mu)$ be a complete Riemannian manifold of dimension $d$, 
endowed with the Riemannian metric $\rho$ and the corresponding 
Riemannian measure $\mu$. Let $h(M)$ be Cheeger's isoperimetric 
costant, defined by
$$
h(M)= \inf  \frac{\sigma(\partial A)}{\mu(A)}
$$
where the infimum runs over all bounded open sets $A$  with smooth 
boundary  $\partial A$ such that $\mu(A)\le \mu(M)/2$.   Here $\sigma$ 
denotes the induced $(d-1)$-dimensional Riemannian measure on $
\partial A$. Note that the  condition  $\mu(A)\le \mu(M)/2$ is automatically 
satisfied if $\mu(M)=\infty$.\par
In \cite[Section 9]{CMM} we proved that, on Riemannian manifolds of 
infinite measure,  the isoperimetric property (I) is  equivalent to the 
positivity of $h(M)$.
 Moreover, if the Ricci curvature is bounded from below, both properties 
are equivalent to the positivity of the bottom of the spectrum of $M$
$$
b(M)=\inf\set{\int_M\mod{\nabla f}^2\wrt\mu: f\in C^1_c(M), \norm{f}{2}=1}.
$$\par
Here we shall prove that, when $M$ has finite measure, an analogous 
characterization holds for the complementary isoperimetric property {\cIP}, provided that  we replace  $b(M)$ by the spectral gap of the 
Laplacian
$$
\lambda_1(M)=\set{\int_M\mod{\nabla f}^2\wrt\mu: f\in C^1_c(M), \norm{f}
{2}=1\ \textnormal{and}\ \int_M f\wrt\mu=0}.
$$
\par
Again, since the arguments coincide to a large extent with those used to 
prove \cite[Theor. 9.5]{CMM}, we point out only the differences, referring 
the reader to \cite{CMM} for details 
and unexplained terminology. \par 
Given a measurable set $E$ in $M$, we shall denote by $P(E)$ its 
perimeter, i.e. the total variation $\Var(\1_E,M)$ in $M$ of the indicator 
function $\1_E$ of $E$.
The following lemma is the counterpart of \cite[Prop. 9.2]{CMM}, in the 
finite measure case.
\begin{lemma}\label{l: cheeger for bd}
Suppose that $M$ is a complete unbounded Riemannian manifold of
finite volume. If $h(M)>0$, then for every measurable set $E$ with
$\mu(E)\leq \mu(M)/2$
$$
P(E)\ge h(M)\, \mu(E).
$$
\end{lemma}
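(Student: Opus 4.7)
The plan is to approximate $\1_E$ by smooth compactly supported functions, apply the coarea formula, and exploit the definition of $h(M)$ on those level sets whose measure does not exceed $\mu(M)/2$. I may assume $P(E)<\infty$, otherwise there is nothing to prove. By the standard density of $C^\infty_c(M)$ in $BV(M)$ on a complete Riemannian manifold, obtained by combining a compact exhaustion of $M$ with a partition-of-unity mollification, there exist $u_n\in C^\infty_c(M)$ with $0\le u_n\le 1$, $u_n\to\1_E$ in $L^1(\mu)$, and $\int_M\mod{\nabla u_n}\wrt\mu\to P(E)$. The coarea formula then yields
$$
\int_M\mod{\nabla u_n}\wrt\mu=\int_0^1\sigma\bigl(\partial\{u_n>t\}\bigr)\wrt t,
$$
and by Sard's theorem, for almost every $t\in(0,1)$ the set $\{u_n>t\}$ is a bounded open set with smooth boundary.

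I would first handle the strict case $\mu(E)<\mu(M)/2$. For any fixed $\delta\in(0,1)$, the $L^1$-convergence gives $\mu(\{u_n>\delta\})\to\mu(E)<\mu(M)/2$; by monotonicity of $t\mapsto\mu(\{u_n>t\})$, for $n$ large enough $\mu(\{u_n>t\})\le\mu(M)/2$ for every $t\ge\delta$, so $\{u_n>t\}$ is admissible in the definition of $h(M)$ for almost every such $t$. Integrating $\sigma(\partial\{u_n>t\})\ge h(M)\mu(\{u_n>t\})$ over $t\in[\delta,1]$ gives
$$
\int_M\mod{\nabla u_n}\wrt\mu\ge h(M)\int_\delta^1\mu(\{u_n>t\})\wrt t=h(M)\int_M(u_n-\delta)_+\wrt\mu.
$$
Sending $n\to\infty$ and then $\delta\to 0^+$ yields $P(E)\ge h(M)\mu(E)$.

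To handle the boundary case $\mu(E)=\mu(M)/2$, I would reduce to the strict case by a small perturbation. Pick a density point $x_0\in M$ of $E$, which exists since $\mu(E)>0$ and the Riemannian measure is locally doubling, and set $E_r=E\setminus B(x_0,r)$ for small $r>0$. Then $\mu(E_r)<\mu(E)=\mu(M)/2$ and $\mu(E_r)\to\mu(E)$ as $r\to 0^+$. By strong subadditivity of perimeter,
$$
P(E_r)\le P(E)+P(B(x_0,r))=P(E)+\sigma\bigl(\partial B(x_0,r)\bigr),
$$
and $\sigma(\partial B(x_0,r))\to 0$ as $r\to 0^+$ on a Riemannian manifold. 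Applying the strict case to $E_r$ gives $P(E_r)\ge h(M)\mu(E_r)$, so $\liminf_{r\to 0^+}P(E_r)\ge h(M)\mu(E)$; combining with $\limsup_{r\to 0^+}P(E_r)\le P(E)$ yields $P(E)\ge h(M)\mu(E)$.

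The main technical obstacle is the BV-approximation step, which requires some care on the noncompact manifold (smooth truncation followed by mollification, with control of the BV norm); the remaining arguments are standard applications of the coarea formula, Sard's theorem, and the fact that the perimeter of a small geodesic ball vanishes as the radius goes to zero.
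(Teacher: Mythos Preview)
Your argument is correct and follows the same overall strategy as the paper: approximate $\1_E$ in $L^1$ by smooth compactly supported functions, invoke the coarea formula to compare $\int_M\mod{\nabla f}\wrt\mu$ with $\int_M\mod{f}\wrt\mu$, and pass to the limit. The difference lies in how the constraint $\mu(\{f>t\})\le\mu(M)/2$ is enforced. The paper avoids your $\delta$-truncation and the separate treatment of the boundary case $\mu(E)=\mu(M)/2$ by citing \cite[Prop.~1.4]{MPPP}, which produces approximants $f_n\in C^1_c(M)$ whose \emph{supports} already have measure strictly less than $\mu(M)/2$; every superlevel set of such an $f_n$ is then automatically admissible in the definition of $h(M)$, and the inequality $\int_M\mod{\nabla f_n}\wrt\mu\ge h(M)\int_M\mod{f_n}\wrt\mu$ follows in one line from coarea. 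Your route trades that external reference for a more hands-on (and slightly longer) argument using only the standard $BV$ approximation; both reach the same conclusion.
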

\begin{proof} Let $f$ be a real-valued function in $C_c^1(M)$, whose support has measure less than $\mu(M)/2$. By the coarea formula
\cite{Chavel}, 
$$
\int_M\mod{\nabla f}\wrt\mu\ge h(M)\, \int_M\mod{f}\wrt\mu.
$$ 
By \cite[Prop. 1.4]{MPPP},  there exists
a sequence $(f_n)$ of functions in $C^1_c(M)$, whose support has measure less than $\mu(M)/2$, such that $f_n\to
\1_E$ in $L^1(M)$ and \break $\int_M\mod{\nabla f_n}\wrt\mu\to\Var(\1_E,M)=P
(E)$.  Hence, passing to the limit, we get
$
P(E)\ge\,h(M)\, \mu(E).
$
\end{proof}
Now we are ready to state the main result of this section. 
We recall that the constant ${\IcM}$ is defined
in (\ref{IcM}).

\begin{theorem} \label{t: Cheeger and (P)}
Suppose that $M$ is a complete unbounded Riemannian manifold of
finite volume and  Ricci curvature bounded from below.  Then the
following are equivalent:
\begin{enumerate}
\item[\itemno1] $h(M)>0$;
\item[\itemno2]
$M$
possesses property {\cIP}\ ;
\item[\itemno3]  $\lambda_1(M)>0$;
\end{enumerate}
\end{theorem}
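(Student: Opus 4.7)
My plan is to decouple the three-way equivalence into the classical $\rmi \Leftrightarrow \rmiii$ (for which the Ricci lower bound is essential) and the new $\rmi \Leftrightarrow \rmii$. For $\rmi \Leftrightarrow \rmiii$ I would invoke the Cheeger inequality $\lambda_1(M) \geq h(M)^2/4$, yielding $\rmi \Rightarrow \rmiii$ via the min--max characterisation of $\lambda_1$ together with Lemma~\ref{l: cheeger for bd} and the coarea formula; the reverse implication $\rmiii \Rightarrow \rmi$ comes from Buser's inequality, which on complete manifolds with $\Ric \geq -K$ reads $\lambda_1(M) \leq C(K,d)\,\bigl(h(M) + h(M)^2\bigr)$ and so forces $h(M)>0$ whenever $\lambda_1(M)>0$. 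This is the only point at which the Ricci assumption enters.

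For $\rmi \Rightarrow \rmii$ I would mimic the infinite-measure template of \cite[Thm~9.5]{CMM} with one adaptation. Since $M$ is unbounded but has finite measure, I can choose $B_0$ so large that $\mu(M \setminus \bar B_0) \leq \mu(M)/2$. For an open $A \subset M \setminus \bar B_0$, set $A^t = \{x \in A : \rho(x,A^c) > t\}$ and $f(t) = \mu(A^t)$. Since $A^t \subset M \setminus \bar B_0$ we have $\mu(A^t) \leq \mu(M)/2$ for every $t \geq 0$, so Lemma~\ref{l: cheeger for bd} is applicable to $A^t$, and combined with the coarea formula it yields $-f'(t) \geq P(A^t) \geq h(M)\, f(t)$ for a.e.\ $t$. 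Integrating gives $f(t) \leq \mu(A)\,\e^{-h(M)\,t}$, whence
$$
\mu(A_\kappa) = \mu(A) - f(\kappa) \geq \bigl(1 - \e^{-h(M)\,\kappa}\bigr)\,\mu(A),
$$
which is $\rmii$ with $\IcM \geq h(M)$.

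For $\rmii \Rightarrow \rmi$ I would first extract the sharp infinitesimal form of $\rmii$: letting $\kappa \to 0^+$ in \eqref{PIc} and using the Minkowski content identity $\sigma(\partial E) = \lim_{\kappa\to 0^+} \mu(E_\kappa)/\kappa$, valid for smooth bounded $E$, I obtain $\sigma(\partial E) \geq \IcM\,\mu(E)$ for every smooth bounded $E \subset M \setminus \bar B_0$. For a general smooth bounded $E$ with $\mu(E) \leq \mu(M)/2$, after a small perturbation making $\partial E$ transverse to $\partial B_0$ I would apply the previous step to the open set $\tilde E = E \setminus \bar B_0 \subset M \setminus \bar B_0$, whose piecewise smooth boundary satisfies $\partial \tilde E \subset \partial E \cup \partial B_0$; this gives
$$
\sigma(\partial E) + \sigma(\partial B_0) \geq \IcM\,\mu(E \setminus \bar B_0).
$$
When $\mu(E \setminus \bar B_0) \geq \mu(E)/2$ and $\mu(E)$ exceeds a threshold depending only on $\sigma(\partial B_0)$ and $\IcM$, this already yields $\sigma(\partial E) \geq c\,\mu(E)$. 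The remaining cases---$E$ mostly sitting inside $\bar B_0$, or $\mu(E)$ below the threshold---are handled by a local isoperimetric inequality on a precompact neighbourhood of $\bar B_0$, available via Bishop--Gromov volume comparison under the lower Ricci bound.

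The main obstacle I anticipate is this final case analysis for $\rmii \Rightarrow \rmi$: property $\rmii$ gives no direct control over sets that sit mostly inside $\bar B_0$, so the ``far from $B_0$'' estimate from $\rmii$ and the local isoperimetric inequality near $\bar B_0$ must be patched together \emph{uniformly} in $E$ so as to produce a single positive lower bound for $h(M)$.
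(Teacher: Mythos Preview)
Your arguments for $\rmi\Rightarrow\rmii$ and $\rmiii\Rightarrow\rmi$ coincide with the paper's: the paper likewise chooses $B_0$ with $\mu(B_0)>\mu(M)/2$ and runs a coarea argument on (a truncation of) the distance to $A^c$, and it also quotes Buser--Ledoux for $\rmiii\Rightarrow\rmi$. Your direct $\rmi\Rightarrow\rmiii$ via Cheeger's inequality is fine, though the paper does not need it, since it argues along the cycle $\rmi\Rightarrow\rmii\Rightarrow\rmiii\Rightarrow\rmi$.

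The substantive difference is that you attempt $\rmii\Rightarrow\rmi$ directly, and this is where your proposal has a genuine gap---one you flag yourself. Property {\cIP} says nothing about sets meeting $\bar B_0$, and your proposed patching (split $E$ across $\partial B_0$, then invoke a local isoperimetric inequality for the remaining cases) would have to produce a \emph{uniform} lower bound on $\sigma(\partial E)/\mu(E)$ over all smooth bounded $E$ with $\mu(E)\le\mu(M)/2$, from data available only outside $\bar B_0$ plus a compact perturbation. The step ``perturb $\partial E$ to be transverse to $\partial B_0$'' changes both $\sigma(\partial E)$ and $\mu(E)$; the local isoperimetric inequality you appeal to is of Euclidean type $\sigma(\partial E)\ge c\,\mu(E)^{(d-1)/d}$ and only applies to $E$ contained in a fixed compact set, whereas your ``small $\mu(E)$'' case allows $E$ to wander arbitrarily far from $B_0$. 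Making all of this uniform is at best delicate and at worst circular.

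The paper sidesteps this entirely by proving $\rmii\Rightarrow\rmiii$ instead. From {\cIP} one gets, for every bounded open $A\subset M\setminus\bar B_0$ with regular boundary, $\sigma(\partial A)\ge\IcM\,\mu(A)$ by letting $t\to 0^+$ in Proposition~\ref{p: PPc}\,\rmi. Coarea then yields $\IcM\int_M|f|\,\wrt\mu\le\int_M|\nabla f|\,\wrt\mu$ for all real $f\in C_c^\infty(M\setminus\bar B_0)$, and replacing $f$ by $f^2$ gives the Rayleigh-quotient bound $\int|\nabla f|^2\ge(\IcM^2/4)\int f^2$ for such $f$. The point is that this is precisely what is needed to bound the \emph{bottom of the essential spectrum} $b_{\mathrm{ess}}(M)$ from below, since by Brooks' variational characterisation \cite{Br} one may compute $b_{\mathrm{ess}}(M)$ using only test functions supported outside any fixed compact set. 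Hence $b_{\mathrm{ess}}(M)>0$, so $0$ is isolated in the spectrum of $\cL$ and $\lambda_1(M)>0$. This route uses exactly the information {\cIP} provides---control outside $\bar B_0$---and nothing more; no patching near $B_0$ is required. I recommend you replace your $\rmii\Rightarrow\rmi$ attempt by this $\rmii\Rightarrow\rmiii$ argument.
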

\begin{proof}
To prove that \rmi\  implies \rmii \ , we fix a ball $B_0$ such that 
$\mu(B_0)>\mu(M)/2$ and we consider a open set $A$ in $M\setminus 
\Bar{B}_0$. Fix $t>0$ and let $f$ be the function defined by
$$
f(x)=\begin{cases}
t^{-1}\, \rho(x,A^c) &\textnormal{if}\  x\in A_t  \\
1 & \textnormal{if}\ x\in A\setminus A_t \\
0 & \textnormal{if} \ x\in A^c.
\end{cases}
$$
Then $f$ is Lipschitz and $\mod{\nabla f(x)}=t^{-1}$ for almost every $x$ 
in $A_t$, $\mod{\nabla f(x)}=0$ elsewhere. Thus, by the coarea formula 
for  functions of bounded variation \cite{EG, M} and Lemma \ref{l: cheeger for bd},
\begin{align*}
t^{-1}\,\mu(A_t)=\int\mod{\nabla f}\wrt\mu=
&\int_0^1 P\big(\set{f=s}\big)\wrt s \\
\ge&\,h(M)\int_0^1\mu\big(\set{f>s}\big) \wrt s \\ 
=&\,h(M) \int f\wrt \mu \\
\ge&\,h(M) \big(\mu(A)-\mu(A_t)\big). 
\end{align*}Thus
$$
\mu(A_t)\ge \frac{h(M)}{1+h(M)t}\,t\,\mu(A) \qquad\forall t>0.
$$
Hence $M$ satisfies property {\cIP} and  by (\ref{IcM}) 
the constant $\IcM$ is at least $h(M)$.\par
Next, we prove that \rmii\ implies \rmiii.
Let $A$ be a bounded open set with regular boundary, contained in 
$M\setminus \Bar{B}_0$. Then  $\mu(A_t)\geq
(1-e^{-I_{M,\Bar B_0}t})\mu(A)$ for all $t>0$, by Proposition \ref{p: PPc}.
 Since the boundary of $A$ is regular,
 $$
\sigma(\partial A)=\lim_{t\to 0+}\frac{ \mu(A_t)}{t}\ge \,I_{M,B_0}\,\mu(A).
$$
Hence, by  the coarea formula, for every real-valued function $f$ 
in $C^\infty_c(M\setminus \Bar{B}_0)$
\begin{equation}\label{eq: local1}
    I_{M,B_0}\int_M\mod{f}\wrt\mu\leq \int_M\mod{\nabla f}\wrt\mu.
    \end{equation}
    By
replacing $f$ with $f^2$ in (\ref{eq: local1}), 
we obtain that
\begin{equation}\label{eq: local2}
\inf\frac{\int_M\mod{\nabla f}^2\wrt\mu}{\int_M\mod{f}^2\wrt\mu}\geq
\frac{I^2_{M, B_0}}{4},
\end{equation}
where the infimum is taken over  all real $f$ in
$C^\infty_c(M\setminus \Bar B_0)$, such that $\norm{f}{2}\not=0$. Hence  the  bottom 
of the essential spectrum $b_{\textrm{ess}}(M)$ of the Laplace--Beltrami operator on $M$  
is positive, by the variational characterization of $b_{\textrm{ess}}(M)$ \cite{Br}. 
Thus $0$ is an isolated point in the spectrum and
$\lambda_1(M)>0$.
\par
Finally, to prove that \rmiii\ implies \rmi, we use the fact that, if the Ricci 
curvature is bounded below by $-K$ for some $K
\ge 0$, then
$$
\lambda_1(M)\le C\big(\sqrt{K}h(M)+h(M)^2\big),
$$ 
where $C$ is a constant which depends only on the dimension of $M$ 
\cite{Bu,Le}.
\end{proof}

\section{Another family of metric spaces}
\label{s: Another}

In this section we shall construct another family of metric measured
spaces which are locally doubling and satisfy the approximate midpoint property 
and  the isoperimetric property. They may have either infinite or finite measure. 
In the first case they satisfy property (I), in the latter case property {\cIP}\ (see Remark \ref{r: compare}  
or \cite{CMM} for the definition of property (I)). 
The
spaces we consider are of the form $(\BR^d,
\rho_\varphi,\mu_{\varphi})$ or $(\BR^d,
\rho_\varphi,\mu_{-\varphi})$, where $\varphi$ is a function in
$C^2(\BR^d)$ which satisfies certain additional conditions specified
later, $\rho_\varphi$ is the Riemannian metric on $\BR^d$ defined by
the length element $\wrt s^2=(1+\mod{\nabla \varphi})^2\,(\wrt
x_1^2+\cdots+\wrt x_d^2)$ and
$\wrt\mu_{\pm\varphi}=\e^{\pm\varphi}\wrt\lambda$. Note that $\mu_{\pm\varphi}$ is not the Riemannian metric on $(\BR^d,\rho_\varphi)$. First we need some
preliminaries on Riemannian metrics of the form $\wrt s^2=
m^2\,(\wrt x_1^2+\cdots+\wrt x_d^2)$, where $m$ is a continuous
positive function on $\BR^d$ which tends to infinity at infinity.
\par We say that a positive function $m\in C^0(\BR^d)$ is
\emph{tame} if for every $R>0$ there exists a constant $C(R)\ge1$
such that
$$
C(R)^{-1}\le \frac{m(x)}{m(y)}\le C(R) \qquad\forall x,y \in\BR^d {\rm\ \, 
such\ that\ }\, \mod{x-y}<R.
$$
 \par
The following lemma provides a simple criterion for establishing tameness.
\begin{lemma}\label{AC}
Let $m$ be a function in $C^1(\BR^d)$ such that $m\ge 1$
and $\mod{\nabla m}\,\le C\, m^\alpha$
 for some $\alpha$ in $[0,1]$ and some $C>0$. Then $m$ is tame.
\end{lemma}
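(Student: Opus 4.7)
My plan is to split into the two cases $\alpha=1$ and $\alpha\in[0,1)$ and in each case find a suitable function of $m$ that is globally Lipschitz; then integrate along the straight segment joining $x$ to $y$ to obtain the pointwise tameness estimate.

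Case $\alpha=1$: Compute $\nabla\log m=\nabla m/m$, so by hypothesis $|\nabla\log m|\le C\,m^{\alpha-1}=C$. Hence $\log m$ is $C$-Lipschitz. Given $x,y\in\BR^d$ with $|x-y|<R$, integrating along the segment from $y$ to $x$ gives $|\log m(x)-\log m(y)|\le C\,|x-y|\le CR$, whence $\e^{-CR}\le m(x)/m(y)\le \e^{CR}$, so $C(R)=\e^{CR}$ works.

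Case $\alpha\in[0,1)$: Consider $u=m^{1-\alpha}$. Then $\nabla u=(1-\alpha)\,m^{-\alpha}\,\nabla m$, so $|\nabla u|\le (1-\alpha)C$, i.e.\ $u$ is globally $(1-\alpha)C$-Lipschitz. Integrating along the segment, for $|x-y|<R$ one gets
\[
\bigl|m(x)^{1-\alpha}-m(y)^{1-\alpha}\bigr|\le (1-\alpha)C\,R.
\]
Because $m\ge 1$ implies $m(y)^{1-\alpha}\ge 1$, we can absorb the additive constant:
\[
m(x)^{1-\alpha}\le m(y)^{1-\alpha}+(1-\alpha)CR\le m(y)^{1-\alpha}\bigl(1+(1-\alpha)CR\bigr),
\]
so $m(x)/m(y)\le\bigl(1+(1-\alpha)CR\bigr)^{1/(1-\alpha)}$. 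By symmetry the reverse estimate holds, and we may take $C(R)=\bigl(1+(1-\alpha)CR\bigr)^{1/(1-\alpha)}$.

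There is no substantive obstacle. The only delicate point is the passage from an additive Lipschitz bound on $m^{1-\alpha}$ to a multiplicative bound on $m$; this is exactly where the normalization $m\ge 1$ is used, since without it the ratio $m(x)/m(y)$ could blow up near the zero set of $m$. The two cases can be unified by noting that $\alpha=1$ is the formal limit of the $\alpha<1$ formula after taking logarithms, but it is cleaner to treat them separately as above.
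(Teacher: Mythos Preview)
Your proof is correct. The paper's argument is a one-line unification of your two cases: since $m\ge 1$ and $\alpha\le 1$ one has $m^{\alpha-1}\le 1$, so the hypothesis $|\nabla m|\le C\,m^{\alpha}$ immediately gives $|\nabla m|/m\le C\,m^{\alpha-1}\le C$, and then the mean value theorem applied to $\log m$ yields $|\log m(x)-\log m(y)|\le C\,|x-y|\le CR$ for all $\alpha\in[0,1]$ at once, with $C(R)=\e^{CR}$.

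The difference is that you keep track of the exponent $\alpha$ by working with $m^{1-\alpha}$ rather than $\log m$ when $\alpha<1$. This costs you a case split but buys a sharper tameness constant: your $C(R)=\bigl(1+(1-\alpha)CR\bigr)^{1/(1-\alpha)}$ is strictly smaller than $\e^{CR}$ for $\alpha<1$, and recovers $\e^{CR}$ in the limit $\alpha\to 1^-$. The paper simply throws away this extra information by reducing to the worst case $\alpha=1$, which is all that is needed downstream.
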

 \begin{proof}
 By the mean value theorem, for all $x,y$ in $\BR^d$ such that $\mod{x-y}<R$,
 \begin{align*}
\left|\log\frac{m(x)}{m(y)}\right|&\le\,\mod{x-y}\,\max_{z\in\BR^d} 
\frac{\mod{\nabla m(z)}}{m(z)}\,\le\,CR. 
\end{align*}
\end{proof}
It is easy to see that the functions $m(x)=1+\mod{x}^\alpha$, with $
\alpha\ge 0$, are tame. The function $\e^{\mod{x}^\alpha}$ is tame
 if and only if $0\le \alpha\le 1$.\par
\begin{proposition}\label{equivalence}
Let $m$ be a tame function such that $\lim_{x\to\infty}m(x)=\infty$.  
Denote by $\rho$ the Riemannian metric on $\BR^d$ defined by
 the length element $\wrt s^2=m(x)^2\,(\wrt x_1^2+\cdots+\wrt x_d^2)$. 
Then the manifold $(\BR^d,\rho)$ is complete.
  Moreover, for every $R>0$, there exists a constant $C(R)\ge1$ such that 
for all $x$, $y$ in $\BR^d$ with $\rho(x,y)<R$
\begin{equation}\label{eq}
C(R)^{-1}\,m(x)\,\mod{x-y}\,\le\,
\rho(x,y)\,\le\,C(R)\,m(x)\,\mod{x-y}.
\end{equation}
\end{proposition}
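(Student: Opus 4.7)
The plan is to establish completeness and the local comparison~(\ref{eq}) using two consequences of the hypotheses on $m$: first, since $m$ is continuous, positive and tends to infinity at infinity, $\delta:=\inf_{\BR^d}m>0$ and every sublevel set $\{m\le M\}$ is Euclidean-bounded; second, tameness gives the oscillation bound $m(z)/m(x)\in[C(r)^{-1},C(r)]$ whenever $\mod{x-z}\le r$. Integrating $m$ along an arbitrary rectifiable curve immediately yields $\rho(x,y)\ge\delta\mod{x-y}$, so any $\rho$-Cauchy sequence is also Euclidean-Cauchy and converges to some $x_\infty\in\BR^d$. I would then finish the completeness argument by taking the straight Euclidean segment from $x_n$ to $x_\infty$, observing that $m$ is bounded above on any fixed Euclidean ball around $x_\infty$ by continuity, and deducing $\rho(x_n,x_\infty)\to 0$; geodesic completeness of the manifold then follows from the Hopf--Rinow theorem.

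For the upper bound in~(\ref{eq}), given $\rho(x,y)<R$ the inequality $\rho\ge\delta\mod{\cdot}$ forces $\mod{x-y}\le R/\delta$, so the straight Euclidean segment from $x$ to $y$ lies in a Euclidean ball on which tameness controls $m$ by $C(R/\delta)\,m(x)$; integrating $m$ along that segment bounds $\rho(x,y)$ above by $C(R/\delta)\,m(x)\mod{x-y}$.

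The main obstacle is the lower bound, since curves that nearly minimize $\rho(x,y)$ need not be straight and can wander into regions where $m$ differs substantially from $m(x)$. My plan is a two-case argument keyed to the size of $m(x)$. For any rectifiable $\gamma$ from $x$ to $y$ of Riemannian length $L<R$, parametrized by Euclidean arclength on $[0,T]$, I would first treat the regime $m(x)\ge R\,C(1)$: if $\gamma$ were to exit the Euclidean ball of Euclidean radius $1$ about $x$, then on the initial portion inside that ball tameness would give $m\circ\gamma\ge m(x)/C(1)$, forcing $L\ge m(x)/C(1)\ge R$, a contradiction; hence $\gamma$ stays in that ball and $L\ge(m(x)/C(1))\,T\ge(m(x)/C(1))\,\mod{x-y}$. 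In the complementary regime $m(x)<R\,C(1)$ the cruder chain $L\ge\delta T\ge\delta\mod{x-y}\ge(\delta/(R\,C(1)))\,m(x)\mod{x-y}$ suffices, because $m(x)$ is then already bounded by a constant depending only on $R$. Taking the infimum over $\gamma$ and combining the two cases yields the lower bound in~(\ref{eq}) with $C(R)=\max\bigl(C(R/\delta),\,C(1)\max(1,R/\delta)\bigr)$, completing the proof.
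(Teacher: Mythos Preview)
Your proof is correct and follows essentially the same strategy as the paper: both obtain $\rho\ge\delta\,|x-y|$ from $\inf m>0$, derive the upper bound by integrating $m$ along the Euclidean segment (using tameness on a Euclidean ball of radius $R/\delta$), and prove the lower bound by a dichotomy on the size of $m(x)$, showing in the large-$m(x)$ regime that any near-minimizing curve cannot escape a small Euclidean ball around $x$. The only notable difference is that the paper first invokes Hopf--Rinow to work with a minimizing geodesic, whereas you argue directly with an arbitrary curve of Riemannian length $<R$; your version is slightly cleaner logically since it does not rely on the existence of minimizers.
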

\begin{proof} The function $m$ has a positive minimum on $\BR^d$, which 
we may assume to be greater
 than or equal to one, by multiplying $m$ by a positive constant if 
necessary. If $\gamma$ is a path
  in $\BR^d$ we shall denote by $\ell(\gamma)$ its length with respect to the 
Riemannian metric $\rho$ and by
   $\ell_e(\gamma)$ its Euclidean length. Since the minimum of $m$ on $
\BR^d$ is at least $1$ we
    have that $\ell(\gamma)\ge\,\,\ell_e(\gamma)$ for all paths $\gamma$. 
Hence
\begin{equation}\label{r>e}
\rho(x,y)\ge\,\mod{x-y} \qquad\forall x,y \in \BR^d.
\end{equation}
Let $x$ and $y$ be two points in $\BR^d$ such that $\rho(x,y)<R$ and 
denote by $\gamma$ be the
 segment of line joining them. Since $\mod{x-y}\le\rho(x,y)< R$ and $m$ is tame,
\begin{equation}\label{second}
\rho(x,y)\le \ell(\gamma)=\int_0^1 m\big(\gamma(t)\big)\,\mod{\gamma'(t)}
\wrt t\le \, C(R) \, m(x)\,\mod{x-y}.
\end{equation}
This proves the second inequality in (\ref{eq}).\par
Together the two inequalities (\ref{r>e}) and (\ref{second})
imply that the manifold $(\BR^d,\rho)$ is complete.  In particular any two 
points in $(\BR^d,\rho)$ may be joined by a minimizing geodesic by the 
Hopf-Rinow theorem.\par It remains to prove the first inequality  in (\ref
{eq}).  We observe that
there exists a constant $A$ such that for all $S>0$ there exists a  compact 
set $K(S)$ in $\BR^d$ such that
\begin{equation}\label{unif}
A^{-1}\le \frac{m(x)}{m(y)}\le A \qquad\forall x,y \in\BR^d \ \,{\rm such \ that\ 
\,}  x\notin K(S),  \ \mod{x-y}<S/m(x).
\end{equation}
Indeed, by the definition of tame function it suffices to choose $A=C(1)$ and $K(S)=\set{x
\in \BR^d: m(x)\le S}$.\par
Fix $R>0$ and let $x,y$ be in $\BR^d$ such that $\rho(x,y)<R$.
Assume first that $x\notin K(AR)$ and let $\gamma:[0,\rho(x,y)]\to\BR^d$  
be a minimizing geodesic joining $x$ and $y$. We claim that $\mod
{\gamma(t)-x}<AR/m(x)$ for all $t$ in $[0,\rho(x,y)]$.  Indeed, suppose by 
contradiction that   there exists  $t_0$ in $\big[0,\rho(x,y)\big]$ such that $
\mod{\gamma(t_0)-x}=AR/m(x)$ and  $\mod{\gamma(t)-x}<AR/m(x)$ for 
all $t$ in $[0,t_0)$. Then, by (\ref{unif})
 \begin{align*}
\rho(x,y)\ge& \int_0^{t_0} m\big(\gamma(t)\big)\,\mod{\gamma'(t)} \wrt t \\
\ge& \,A^{-1} \,m(x)\,\mod{\gamma(t_0)-x} \\
= &R, \end{align*}
which contradicts the assumption $\rho(x,y)<R$. Thus the claim is proved.  
Hence by (\ref{unif})
\begin{align*}
\rho(x,y)&=\int_0^{\rho (x,y)}m\big(\gamma(t)\big)\,\mod{\gamma'(t)} \wrt t 
\nonumber\\
&\ge A^{-1}\,m(x)\,\mod{y-x}.
\end{align*}
Finally, if $x\in K(AR)$ by (\ref{r>e})
$$
m(x)\,\mod{x-y}\le\, m(x)\ \rho(x,y) \le\, \max_{K(AR)}m\ \rho(x,y).
$$
This concludes the proof of the proposition.
\end{proof}
\noindent
\begin{proposition}\label{p: ld}
Let $\varphi$ be a function in $C^1(\BR^d)$ such that $\lim_{x\to\infty}
\mod{\nabla\varphi(x)}=\infty$ and $1+\mod{\nabla\varphi}$ is tame. Then 
the metric measure spaces $(\BR^d, \rho_\varphi,\mu_\varphi)$ and $(\BR^d, \rho_\varphi,\mu_{-\varphi})$ are locally 
doubling.
\end{proposition}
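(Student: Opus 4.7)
The plan is to reduce the doubling estimate for $\rho_\varphi$-balls to one for Euclidean balls, and then to exploit the fact that on the relevant Euclidean scale the density $\e^{\pm\varphi}$ oscillates only by a bounded factor depending on $b$.

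First I would apply Proposition~\ref{equivalence} to the conformal factor $m=1+\mod{\nabla\varphi}$, which by hypothesis is tame, satisfies $m\ge1$ (after possibly normalising), and blows up at infinity. Fix $b>0$ and let $B_\varphi(x,r)$ denote a $\rho_\varphi$-ball and $B_e(x,s)$ a Euclidean ball. The two-sided estimate \eqref{eq}, applied with $R=2b$, gives a constant $C=C(b)$ such that for every $x\in\BR^d$ and every $r\in(0,2b]$,
$$
B_e\bigl(x,\,r/(C\,m(x))\bigr)\,\subseteq\,B_\varphi(x,r)\,\subseteq\,B_e\bigl(x,\,C\,r/m(x)\bigr).
$$
The outer inclusion is the first inequality in \eqref{eq}; the inner inclusion follows from the straight-line path argument leading to \eqref{second}, since $|y-x|\le r/(C\,m(x))\le b/C$ is bounded, so that the proof of \eqref{second} yields $\rho_\varphi(x,y)\le C\,m(x)\,|x-y|\le r$.

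Next I would control the density on the enclosing Euclidean ball $B_e(x,Cr/m(x))$. By the mean value theorem and the bound $\mod{\nabla\varphi}\le m$,
$$
\bigmod{\varphi(y)-\varphi(x)}\,\le\,\mod{y-x}\,\sup_{z\in[x,y]}m(z).
$$
Since $|y-x|\le Cb/m(x)\le Cb$, tameness of $m$ at scale $Cb$ gives $\sup_z m(z)\le C'\,m(x)$ for a constant $C'=C'(b)$, hence $|\varphi(y)-\varphi(x)|\le CC'b=:K$, a constant depending only on $b$. Therefore $\e^{\pm\varphi(y)}$ is comparable, with ratio $\e^K$, to $\e^{\pm\varphi(x)}$ on this Euclidean ball, and similarly on the inner Euclidean ball.

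Combining these ingredients, for $r\le b$ one gets
$$
\mu_{\pm\varphi}\bigl(B_\varphi(x,2r)\bigr)\le\e^K\e^{\pm\varphi(x)}\,c_d\bigl(2Cr/m(x)\bigr)^d,\qquad \mu_{\pm\varphi}\bigl(B_\varphi(x,r)\bigr)\ge\e^{-K}\e^{\pm\varphi(x)}\,c_d\bigl(r/(Cm(x))\bigr)^d,
$$
where $c_d$ is the volume of the Euclidean unit ball. Taking the ratio, the factor $\e^{\pm\varphi(x)}$ cancels, as does the factor $m(x)^{-d}$, leaving a doubling constant $\e^{2K}(2C^2)^d$ depending only on $b$. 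This is precisely the local doubling condition. The only delicate point is the careful bookkeeping in the sandwich step: one must check that the inner Euclidean inclusion truly follows from \eqref{second} (which in turn needs the Euclidean displacement to remain in a range where tameness applies), and that the tameness scale used for $m$ is uniform in $x$; the smallness $r/m(x)\to 0$ at infinity works in our favour here rather than against us.
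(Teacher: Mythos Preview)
Your argument is correct and follows essentially the same route as the paper: sandwich the $\rho_\varphi$-balls between Euclidean balls of radius comparable to $r/m(x)$ via Proposition~\ref{equivalence}, use the mean value theorem together with tameness of $m=1+\mod{\nabla\varphi}$ to show that $\e^{\pm\varphi}$ oscillates by at most a factor depending on $b$ on these Euclidean balls, and conclude by the doubling of Lebesgue measure. The only cosmetic differences are that the paper first isolates the density estimate as a separate claim (their inequality (\ref{D(R)})) and then phrases the conclusion as a consequence of Lebesgue doubling rather than writing out the explicit ratio $(2C^2)^d\e^{2K}$; also note that no normalisation is needed since $m=1+\mod{\nabla\varphi}\ge1$ automatically.
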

\begin{proof} Write $m(x)=1+\mod{\nabla\varphi(x)}$ for the sake of 
brevity.
Let $B_e(x,r)$ denote the Euclidean ball of centre $x$ and radius $r$ in $
\BR^d$. We claim that for every $R>0$ there exists a constant $D(R)$ 
such that
\begin{equation}\label{D(R)}
D(R)^{-1}\,\e^{\varphi(x)}\,\le\,\e^{\varphi(y)}\,\le\,D(R)\,\e^{\varphi(x)}
\qquad\forall y\in B_e\big(x,R/m(x)\big).
\end{equation}
Indeed, by the mean value theorem and the fact that $m$ is tame
\begin{align*}
\mod{\varphi(x)-\varphi(y)}&\le \max\set{ \mod{\nabla \varphi(z)}\ \mod{x-
y}: z\in B_e\big(x,R/m(x)\big)}\\
&\le \,C(R)\, m(x) \,\mod{x-y}\\
&\le\, C(R)\,R,
\end{align*}
whence (\ref{D(R)}) follows with $D(R)=\e^{C(R)\,R}$. Thus for every 
$R>0$
\begin{equation}\label{eqmeas}
D(R)^{-1}\,\e^{\varphi(x)}\ \le \,\frac{\mu_\varphi\Big(B_e\big(x, r/m(x)\big)
\Big)}{\lambda\Big(B_e\big(x, r/m(x)\big)\Big)}\le\,D(R)\,\e^{\varphi(x)} 
\qquad\forall x\in\BRd\quad 0<r\le R.
\end{equation}
Thus  $(\BR^d, \rho_\varphi,\mu_\varphi)$ is locally doubling, because by Proposition \ref{equivalence} there 
exists a constant $C$ (which depends on $R$ but not on $r$) such that
$$
B_e\big(x,C^{-1}\,r/m(x)\big)\subset B(x,r)\subset B(x,2r)\subset B_e\big(x,
2C\,r/m(x)\big)\qquad\forall r\in[0,R]
$$
and the Lebesgue measure is doubling. The proof for $(\BR^d, \rho_\varphi,\mu_{-\varphi})$ is similar.
\end{proof}
Next, we look for sufficient conditions that guarantee that the
spaces $(\BR^d,\rho_\varphi, \mu_\varphi)$ and $(\BR^d, \rho_\varphi,\mu_{-\varphi})$ satisfy the isoperimetric property.
\begin{definition}\label{admissible}
Let $\varphi$ be function in  $C^1(\BR^d)$. We say that $\varphi$ is 
\emph{admissible} if
\begin{itemize}
\item[\rmi] there exists $\tau_0>0$ such that $\varphi$ is  $C^2$ for $\mod
{x}\ge \tau_0$;
\item[\rmii] ${1+\mod{\nabla\varphi}}$ is tame and
$$\lim_{x\to\infty}\mod{\nabla\varphi(x)}=\infty,\quad\lim_{x\to\infty}\frac
{\mod{{\mathrm {Hess}}\,\varphi(x)}}{\mod{\nabla\varphi(x)}^2}=0;
$$
\item[\rmiii]  the radial derivative $\partial_r\varphi=\smallfrac{x}{\mod{x}}
\cdot\nabla\varphi$  satisfies
$$
\liminf_{x\to\infty}\frac{\partial_r\varphi(x)}{\mod{\nabla\varphi(x)}}>0.
$$
\end{itemize}
\end{definition}
It is easy to see that the functions $\mod{x}^\alpha$, with $\alpha> 1$ are 
admissible. The function $\e^{\mod{x}^\alpha}$ is not admissible if $\alpha>1$.
\begin{lemma}\label{Hopital}
Let $\psi:[0,\infty)\to\BR$ be a continuous function such that $\psi\in C^2\big([\tau_0,
\infty)\big)$ for some $\tau_0>0$. Assume that
$$
\liminf_{r\to\infty} \psi'(r)>0,\quad \lim_{r\to\infty}\frac{\psi''(r)}{\big(\psi'(r)
\big)^2}=0.
$$
Let $h$ be  a positive function in $C^0\big([0,\infty)\big)$ such that
$$
\liminf_{r\to\infty} h(r)\,\psi'(r)>0.
$$
Then for every $d\ge1$ there exists  a positive constant $C$ such that $$
\int_\tau^{\tau+ a\,h(\tau)} \e^{\psi(r)}r^{d-1}\wrt r\ge \,C\, a\,\int_0^{\tau+a
\,h(\tau)} \e^{\psi(r)}r^{d-1}\wrt r\qquad\forall \tau\in\BR_+\quad\forall a\in 
[0,1].
$$
\end{lemma}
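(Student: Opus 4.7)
The plan is to exploit the rapid growth of $F(r):=\e^{\psi(r)}r^{d-1}$, which forces $G(\beta):=\int_0^\beta F$, where $\beta:=\tau+ah(\tau)$, to be concentrated near its right endpoint. First I would fix constants $c_0,c_1>0$, a small $\epsilon>0$, and $R\geq\tau_0$ so that for all $r\geq R$ one has $\psi'(r)\geq c_0$, $h(r)\psi'(r)\geq c_1$, $|\psi''(r)/\psi'(r)^2|\leq\epsilon$ and $(d-1)/(r\psi'(r))\leq\epsilon$. The technical heart is a Laplace-type upper bound: integrating by parts via $\e^{\psi(r)}=(1/\psi'(r))(d/dr)\e^{\psi(r)}$ on $[R,\tau]$ produces
\[
\int_R^\tau F(r)\wrt r=\frac{\tau^{d-1}\e^{\psi(\tau)}}{\psi'(\tau)}-\frac{R^{d-1}\e^{\psi(R)}}{\psi'(R)}-\int_R^\tau \e^{\psi(r)}\left[\frac{(d-1)r^{d-2}}{\psi'(r)}-\frac{r^{d-1}\psi''(r)}{\psi'(r)^2}\right]\wrt r,
\]
and the last integrand is pointwise bounded by $2\epsilon F(r)$; choosing $\epsilon<1/4$ absorbs it into a fraction of $\int_R^\tau F$, and adding the bounded contribution from $[0,R]$ yields $G(\tau)\leq K F(\tau)/\psi'(\tau)$ for $\tau\geq R_1$, some $R_1\geq R$.

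From this bound, $G'(r)/G(r)\geq\psi'(r)/K$ on $[R_1,\infty)$, so integrating the logarithmic derivative one obtains
\[
G(\beta)/G(\tau)\geq\exp\!\bigl((\psi(\beta)-\psi(\tau))/K\bigr), \qquad \beta\geq\tau\geq R_1,
\]
which rearranges to $\int_\tau^\beta F/\int_0^\beta F\geq 1-\exp(-(\psi(\beta)-\psi(\tau))/K)$. To lower-bound $\psi(\beta)-\psi(\tau)=\int_\tau^\beta\psi'$, I would use slow variation of $u:=1/\psi'$ (since $u'=-\psi''/\psi'^2$ is uniformly small past $R_1$): for $s\in[\tau,\tau+1/\psi'(\tau)]$ one has $\psi'(s)\geq\psi'(\tau)/2$. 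If $ah(\tau)\psi'(\tau)\leq 1$ then $[\tau,\beta]$ lies inside this interval and $\psi(\beta)-\psi(\tau)\geq ah(\tau)\psi'(\tau)/2\geq ac_1/2$; otherwise $\psi(\beta)-\psi(\tau)\geq\psi(\tau+1/\psi'(\tau))-\psi(\tau)\geq 1/2$. In either case $\psi(\beta)-\psi(\tau)\geq c_2\, a$ with $c_2=\min(c_1,1)/2$ for $a\in[0,1]$, settling the regime $\tau\geq R_1$.

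For bounded $\tau$ I would set $T^*:=R_1+1/c_0$ and split once more. If $\beta\leq T^*$, compactness on $[0,T^*]$ gives $\e^{\psi_{\min}}r^{d-1}\leq F(r)\leq\e^{\psi_{\max}}r^{d-1}$ and $h\geq h_0>0$ there, so
\[
\frac{\int_\tau^\beta F}{\int_0^\beta F}\geq\e^{\psi_{\min}-\psi_{\max}}\bigl(1-(\tau/\beta)^d\bigr)\geq\e^{\psi_{\min}-\psi_{\max}}\,\frac{ah(\tau)}{\beta}\geq\frac{\e^{\psi_{\min}-\psi_{\max}}h_0}{T^*}\,a.
\]
If instead $\beta>T^*$ and $\tau<R_1$, then $\int_\tau^\beta F\geq\int_{R_1}^\beta F$, and the Gronwall-type estimate applied from $R_1$ to $\beta$ yields a ratio $\geq 1-\e^{-c_0(T^*-R_1)/K}$, which is independent of $a$ and hence dominates $Ca$ for any $a\in[0,1]$. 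Taking $C$ to be the smallest of the three resulting constants closes the argument.

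The hard part will be the Laplace-type upper bound of the first paragraph: one must simultaneously exploit $\psi'\to\infty$ to tame $(d-1)/(r\psi'(r))$ and the hypothesis $\psi''/\psi'^2\to 0$ to control the second error term, then absorb both into a strict sub-unit fraction of $\int_R^\tau F$. Everything else reduces to this estimate plus an elementary case split according to whether $ah(\tau)$ is smaller or larger than the natural scale $1/\psi'(\tau)$.
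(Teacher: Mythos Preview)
Your argument is correct, but it is considerably more elaborate than the paper's. Both proofs begin with the same Laplace-type upper bound $G(\tau)\le K\,F(\tau)/\psi'(\tau)$ for large $\tau$; the paper obtains it by a direct application of l'H\^opital's rule to $G(\tau)/\bigl(F(\tau)/\psi'(\tau)\bigr)$, whereas you derive it by integration by parts and absorption, which amounts to the same thing. The divergence comes in the lower bound on the numerator. The paper simply observes that, once $\psi$ is increasing, $F(r)\ge \e^{\psi(\tau)}\tau^{d-1}$ on $[\tau,\tau+ah(\tau)]$, so
\[
\int_\tau^{\tau+ah(\tau)}F\ \ge\ a\,h(\tau)\,\e^{\psi(\tau)}\tau^{d-1}
\ =\ a\,h(\tau)\psi'(\tau)\cdot\frac{F(\tau)}{\psi'(\tau)}
\ \ge\ \eta\,a\cdot\frac{F(\tau)}{\psi'(\tau)}\ \ge\ \frac{\eta}{2}\,a\,G(\tau),
\]
and after the trivial reduction $\int_0^{\tau+ah(\tau)}F\le\int_0^\tau F+\int_\tau^{\tau+ah(\tau)}F$ this finishes the large-$\tau$ case in two lines, with no Gronwall inequality and no slow-variation analysis of $1/\psi'$. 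Your detour through $G'/G\ge\psi'/K$, the exponential comparison $G(\beta)/G(\tau)\ge\exp\bigl((\psi(\beta)-\psi(\tau))/K\bigr)$, and the dichotomy according to whether $ah(\tau)\psi'(\tau)\lessgtr 1$ all work, but they are avoidable once one notices that the monotonicity bound on $F$ already pairs perfectly with the hypothesis $\liminf h\psi'>0$. Your treatment of the compact range of $\tau$ is essentially the same as the paper's.

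One small wording slip: in your closing paragraph you say you ``exploit $\psi'\to\infty$'' to control $(d-1)/(r\psi'(r))$. The hypotheses give only $\liminf\psi'>0$; what actually makes $(d-1)/(r\psi'(r))$ small is $r\to\infty$ together with $\psi'\ge c_0$. Your setup in the first paragraph is stated correctly, so this is just a mis-explanation rather than a gap.
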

\begin{proof}
It is clearly enough to prove that
$$
\int_\tau^{\tau+ a\,h(\tau)} \e^{\psi(r)}r^{d-1}\wrt r\ge \,C\, a\,\int_0^{\tau} \e^
{\psi(r)}r^{d-1}\wrt r\qquad\forall \tau\in\BR_+.
$$
The integral in the right hand side is asymptotic to
$\e^{\psi(\tau)}\tau^{d-1}/\psi'(\tau)$ as $\tau$ tends to infinity,
by l'H\^opital's rule and the assumptions on $\psi$. Let
$\tau_1>\tau_0$ be such that
\begin{equation}\label{star1}
\int_0^\tau\e^{\psi(r)}\,r^{d-1}\wrt r\le 2\,\e^{\psi(\tau)}\,\frac{\tau^{d-1}}
{\psi'(\tau)}\qquad\forall \tau\ge\tau_1.
\end{equation}
The assumptions on $\psi$ and $h$ imply that if we choose $\tau_1$ 
sufficiently large there exists $\eta>0$ such that
$$
\psi'(\tau)\ge\eta,\quad h(\tau)\,\psi'(\tau)\ge\,\eta \qquad\forall \tau\ge
\tau_1.
$$
Thus, if $\tau>\tau_1$ the function $\psi$ is increasing. Hence for $\tau>
\tau_1$
\begin{align*}
\int_\tau^{\tau+ a\,h(\tau)}\e^{\psi(r)}r^{d-1}\wrt r&\ge\,\e^{\psi(\tau)}\,\tau^
{d-1}\, a\,h(\tau)  \\
&\ge \,{\eta}\,{a} \,\e^{\psi(\tau)}\,\frac{\tau^{d-1}}{\psi'(\tau)} \\
&\ge\,\frac{\eta}{2}\,{a}\,\int_0^\tau\e^{\psi(r)}\,r^{d-1}\wrt r,
\end{align*}
where in the last inequality we have used (\ref{star1}). 
It remains to prove the desired inequality for $\tau$ in $[0,\tau_1]$. Set
$
m_0=\min_{[0,\infty]}\psi$, $M_0=\max_{[0,\tau_1]}\psi$ and $h_0=\min_
{[0,\tau_1]}h$. Then for $\tau\in[0,\tau_1]$
$$
\int_0^\tau \e^{\psi(r)}\,r^{d-1}\wrt r\le\,\e^{M_0}\ \frac{\tau^d}{d}
$$
and
\begin{align*}
\int_\tau^{\tau+ a\,h(\tau)}\e^{\psi(r)}\,r^{d-1}\wrt r&\ge\,\e^{m_0}\,\tau^{d-1}
\, a\,h(\tau)\\
&\ge\, \e^{m_0}\,  \tau^{d}\,  a\,h_0/\tau_1.
\end{align*}
This implies that the desired inequality holds also for $\tau$ in $[0,\tau_1]
$.
\end{proof}
\begin{lemma}\label{Hopital^c}
Let $\psi$ and $h$ two functions which satisfy the assumptions of Lemma 
\ref{Hopital}. Assume further that
$$
\lim_{r\to\infty}\big(r-h(r)\big)=\infty.
$$
Then for every $d\ge1$ there exist  positive constants $C$ and $T$ such 
that
$$
\int_{\tau-a\,h(\tau)}^\tau \e^{-\psi(r)}r^{d-1}\wrt r\ge \,C\, a\,\int_{\tau- a\,h
(\tau)}^\infty \e^{-\psi(r)}r^{d-1}\wrt r\qquad\forall \tau\ge\,T \quad\forall a\in 
[0,1].
$$
\end{lemma}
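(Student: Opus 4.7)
The plan is to mirror the strategy of Lemma~\ref{Hopital}, but ``inverted'' so as to handle the decaying integrand $\e^{-\psi(r)}\,r^{d-1}$ and the tail integral $\int_\cdot^\infty$ rather than the head integral $\int_0^\cdot$. The first step is to establish the L'H\^opital-type asymptotic
\begin{equation*}
\int_\tau^\infty \e^{-\psi(r)}\,r^{d-1}\wrt r
\sim \frac{\e^{-\psi(\tau)}\,\tau^{d-1}}{\psi'(\tau)}
\qquad\text{as }\tau\to\infty.
\end{equation*}
This follows by a direct application of l'H\^opital's rule: both sides tend to $0$ (since $\psi\to\infty$), and the ratio of their derivatives equals $[1-(d-1)/(\tau\psi'(\tau))+\psi''(\tau)/\psi'(\tau)^2]^{-1}$, which tends to $1$ because $\tau\psi'(\tau)\to\infty$ (by $\liminf\psi'>0$) and $\psi''/(\psi')^2\to 0$. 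Hence there is $T_1$ such that $\int_\tau^\infty \e^{-\psi(r)}\,r^{d-1}\wrt r \le 2\,\e^{-\psi(\tau)}\,\tau^{d-1}/\psi'(\tau)$ for every $\tau\ge T_1$.

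Using the assumption $r-h(r)\to\infty$, I would then choose $T\ge T_1$ large enough so that for every $\tau\ge T$ and every $a\in[0,1]$ the point $s:=\tau-a\,h(\tau)\ge\tau-h(\tau)$ lies above $T_1$ and above the threshold past which $\psi'\ge\eta_0>0$ (so that $\psi$ is strictly increasing on $[s,\tau]$), while also $h(\tau)\psi'(\tau)\ge\eta>0$. Writing $\phi(a):=\int_s^\tau \e^{-\psi(r)}\,r^{d-1}\wrt r$ and using the trivial splitting $\int_s^\infty = \phi(a)+\int_\tau^\infty$, the target inequality $\phi(a)\ge C\,a\int_s^\infty$ reduces to producing a constant $C'>0$ with $\phi(a)\ge C'\,a\int_\tau^\infty \e^{-\psi(r)}\,r^{d-1}\wrt r$; the lemma then follows with $C=C'/(C'+1)$.

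For this reduced inequality I would argue by a case split on the size of $a\,h(\tau)$ relative to $\tau/2$, whose purpose is to avoid the degeneration of $(\tau-a\,h(\tau))^{d-1}/\tau^{d-1}$ when $h(\tau)/\tau$ is not small. If $a\,h(\tau)\le\tau/2$, then $\tau-a\,h(\tau)\ge\tau/2$, and the monotonicity of $\psi$ gives $\phi(a)\ge\e^{-\psi(\tau)}(\tau/2)^{d-1}\cdot a\,h(\tau)$; dividing by the l'H\^opital upper bound for $\int_\tau^\infty$ and invoking $h(\tau)\psi'(\tau)\ge\eta$ yields a ratio at least $2^{-d}\eta\,a$. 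If instead $a\,h(\tau)>\tau/2$, then $[\tau/2,\tau]\subseteq[s,\tau]$, and the cruder bound $\phi(a)\ge\int_{\tau/2}^\tau \e^{-\psi(r)}\,r^{d-1}\wrt r \ge 2^{-d}\e^{-\psi(\tau)}\tau^d$ yields a ratio at least $2^{-(d+1)}\tau\psi'(\tau)$, which exceeds $1\ge a$ once $T$ is chosen large enough (since $\tau\psi'(\tau)\to\infty$). Taking $C'=\min(2^{-d}\eta,1)$ closes the reduction.

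The main obstacle I anticipate is precisely the possibility that $h(\tau)/\tau$ does not tend to $0$: the hypothesis $r-h(r)\to\infty$ is compatible with, for instance, $h(r)=r-\log r$, in which case $(\tau-a\,h(\tau))^{d-1}$ is not comparable to $\tau^{d-1}$ and the naive estimate $\phi(a)\ge\e^{-\psi(\tau)}(\tau-a\,h(\tau))^{d-1}\,a\,h(\tau)$ alone is insufficient. The case split above bypasses this cleanly, by exploiting the fact that when $a\,h(\tau)$ is large the interval $[s,\tau]$ already contains $[\tau/2,\tau]$, on which the integrand is quantitatively controlled without any reference to $h(\tau)$.
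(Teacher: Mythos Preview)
Your argument is correct, and the overall architecture (l'H\^opital asymptotic for the tail, reduction from $\int_{\tau-a h(\tau)}^\infty$ to $\int_\tau^\infty$, then a pointwise lower bound for the short integral) matches the paper's. The one genuine difference is in how the short integral is bounded from below. You use only that $\psi$ is eventually increasing, so that $\e^{-\psi(r)}\ge\e^{-\psi(\tau)}$ on $[\tau-a h(\tau),\tau]$, and then you are forced into a case split on $a h(\tau)\lessgtr\tau/2$ to control the polynomial factor $r^{d-1}$. The paper instead notes that since $\liminf_{r\to\infty}\psi'(r)>0$ one has $r\psi'(r)\to\infty$, hence $r\psi'(r)>d-1$ for all large $r$, which makes the \emph{full} integrand $\e^{-\psi(r)}r^{d-1}$ decreasing; the minimum on $[\tau-a h(\tau),\tau]$ is then attained at $\tau$, giving directly
\[
\int_{\tau-a h(\tau)}^\tau \e^{-\psi(r)}r^{d-1}\wrt r \ge \e^{-\psi(\tau)}\tau^{d-1}\, a\, h(\tau),
\]
with no case distinction. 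Your case split is a legitimate workaround for the ``obstacle'' you flagged, but the paper's monotonicity observation dissolves that obstacle entirely and yields a shorter proof.
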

\begin{proof}
It is clearly enough to prove that
$$
\int_{\tau-a\,h(\tau)}^\tau \e^{-\psi(r)}r^{d-1}\wrt r\ge \,C\, a\,\int_{\tau}^\infty 
\e^{-\psi(r)}r^{d-1}\wrt r\qquad\forall
 \tau\ge\,T \quad\forall a\in [0,1].
$$
The integral in the right hand side is asymptotic to
$\e^{-\psi(\tau)}\tau^{d-1}/\psi'(\tau)$ as $\tau$ tends to
infinity, by l'H\^opital's rule and the assumptions on $\psi$. Thus
there exists $\tau_1>\tau_0$  such that
\begin{equation}\label{star2}
\quad\int_\tau^\infty\e^{-\psi(r)}\,r^{d-1}\wrt r\le
2\,\e^{-\psi(\tau)}\,\frac{\tau^{d-1}}{\psi'(\tau)}\qquad\forall
\tau\ge\tau_1.
\end{equation}
The assumptions on $\psi$ and $h$ imply that if we choose $\tau_1$ 
sufficiently large there exists $\eta>0$ such that
$$
\psi'(\tau)\ge\eta,\quad h(\tau)\,\psi'(\tau)\ge\,\eta, \quad r\psi'(r)>d-1
\qquad\forall\tau\ge\tau_1.
$$
Note that the last inequality implies that the function $r\mapsto \e^{-\psi(r)}
\, r^{d-1}$ is decreasing for $r>\tau_1$. Choose $T>\tau_1$ such that $
\tau-h(\tau)>\tau_1$ for $\tau\ge T$. Then for $\tau\ge T$
\begin{align*}
\int_{\tau-a\,h(\tau)}^\tau\e^{-\psi(r)}r^{d-1}\wrt r&\ge\,\e^{-\psi(\tau)}\,\tau^
{d-1}\, a\,h(\tau)  \\
&\ge \,{\eta}\,{a} \,\e^{-\psi(\tau)}\,\frac{\tau^{d-1}}{\psi'(\tau)} \\
&\ge\,\frac{\eta}{2}\,{a}\,\int_\tau^\infty\e^{-\psi(r)}\,r^{d-1}\wrt r
\end{align*}
where in the last inequality we have used (\ref{star2}). 
This concludes the proof of the lemma.
\end{proof}

\begin{theorem}\label{adm>P}
Suppose that the function $\varphi$ is admissible. Then
\begin{itemize}
\item[\rmi] the measured
metric space $(\BR^d, \rho_\varphi,\mu_\varphi)$ is locally
doubling, $\mu_\varphi(\BR^d)=\infty$, and satisfies property \PP;
\item[\rmii] the space $(\BR^d, \rho_\varphi,\mu_{-\varphi})$ is locally
doubling, $\mu_{-\varphi}(\BR^d)<\infty$, and satisfies property
{\cIP}, for some ball $B_0\subset \BR^d$.
\end{itemize} 
\end{theorem}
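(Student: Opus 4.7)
The plan is to verify each of the three claims in (i) and (ii) in turn. Local doubling follows at once from Proposition~\ref{p: ld}, since admissibility of $\varphi$ ensures that $m:=1+\mod{\nabla\varphi}$ is tame and $\mod{\nabla\varphi(x)}\to\infty$ as $\mod{x}\to\infty$. For the total measure, condition~\rmiii\ of Definition~\ref{admissible} combined with tameness gives $\partial_r\varphi(x)\gtrsim \mod{\nabla\varphi(x)}\to\infty$ at infinity, so $\varphi(r\omega)\to+\infty$ along every ray $\omega\in S^{d-1}$. Writing
$$
\mu_{\pm\varphi}(\BR^d)=\int_{S^{d-1}}\int_0^\infty \e^{\pm\varphi(r\omega)}\,r^{d-1}\wrt r\wrt\sigma(\omega),
$$
the inner integral diverges with sign $+$ and converges with sign $-$ (uniformly in $\omega$, by tameness), giving $\mu_\varphi(\BR^d)=\infty$ and $\mu_{-\varphi}(\BR^d)<\infty$.

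The main task is to establish properties~\PP{} (for $\mu_\varphi$) and~\cIP{} (for $\mu_{-\varphi}$). The strategy is to reduce each to a one-dimensional estimate by polar slicing. For fixed $\omega\in S^{d-1}$, the set $A_\omega:=\{r>0:r\omega\in A\}$ is open, hence a disjoint union of open intervals. Proposition~\ref{equivalence} and the tameness of $m$ supply a constant $c>0$ such that any point $r\omega\in A$ whose Euclidean distance to the boundary of its radial component is at most $c\kappa/m(r\omega)$ satisfies $\rho_\varphi(r\omega,A^c)\leq\kappa$, hence lies in $A_\kappa$. Thus $A_\kappa$ contains the radial variable-width thickening of $A$, and it suffices to show that this smaller set has $\mu_{\pm\varphi}$-measure at least $C\kappa\,\mu_{\pm\varphi}(A)$.

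Set $\psi(r):=\varphi(r\omega)$ and $h(r):=1/m(r\omega)$. Admissibility conditions~\rmii-\rmiii\ translate directly into the hypotheses of Lemmas~\ref{Hopital} and~\ref{Hopital^c}: $\psi'=\partial_r\varphi(\,\cdot\,\omega)$ has positive liminf and diverges to $\infty$; $\psi''/(\psi')^2\leq C\mod{\mathrm{Hess}\,\varphi}/\mod{\nabla\varphi}^2\to 0$; $h\psi'=\partial_r\varphi/(1+\mod{\nabla\varphi})$ has positive liminf; and $r-h(r)\to\infty$. For case~(i), fix a bounded open set $A$, and for each component $(a_c,b_c)$ of $A_\omega$ apply Lemma~\ref{Hopital} with parameter $a\sim\kappa$ and $\tau$ chosen so that $\tau+a\,h(\tau)\approx b_c$, obtaining
$$
\int_{\tau}^{\tau+a\,h(\tau)} \e^{\varphi(r\omega)}\,r^{d-1}\wrt r\,\gtrsim\,\kappa\int_0^{b_c} \e^{\varphi(r\omega)}\,r^{d-1}\wrt r\,\geq\,\kappa\int_{a_c}^{b_c} \e^{\varphi(r\omega)}\,r^{d-1}\wrt r.
$$
For case~(ii), choose $B_0$ large enough that the left endpoint of every component of every $A_\omega$ (for $A\subseteq M\setminus\Bar{B}_0$) exceeds the threshold $T$ of Lemma~\ref{Hopital^c}, and apply that lemma with $a\sim\kappa$ and $\tau\approx a_c+a\,h(\tau)$ to get
$$
\int_{\tau-a\,h(\tau)}^{\tau} \e^{-\varphi(r\omega)}\,r^{d-1}\wrt r\,\gtrsim\,\kappa\int_{a_c}^{\infty} \e^{-\varphi(r\omega)}\,r^{d-1}\wrt r\,\geq\,\kappa\int_{a_c}^{b_c} \e^{-\varphi(r\omega)}\,r^{d-1}\wrt r.
$$
Summing these disjoint radial contributions over the components of $A_\omega$ and integrating over $\omega\in S^{d-1}$ delivers $\mu_{\pm\varphi}(A_\kappa)\gtrsim\kappa\,\mu_{\pm\varphi}(A)$.

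The principal technical obstacle is managing the variable width $\kappa/m(r\omega)$ of the $\rho_\varphi$-thickening: because $m$ depends on $r$, one must repeatedly invoke tameness to compare $m$ at nearby points within a thickening strip and to pin down the parameters $a$ and $\tau$ in Lemmas~\ref{Hopital}--\ref{Hopital^c}. A minor separate argument is needed for components so short that $c\kappa/m$ already exceeds their length; these components lie entirely in $A_\kappa$ and so satisfy the required bound trivially for $\kappa\in(0,\kappa_0]$ with $\kappa_0$ chosen small enough.
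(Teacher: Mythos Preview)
Your proposal is correct and follows the same overall architecture as the paper: local doubling from Proposition~\ref{p: ld}, total measure from growth of $\varphi$ along rays, and the isoperimetric properties via polar slicing together with Lemmas~\ref{Hopital} and~\ref{Hopital^c}.

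The one genuine difference is in how you extract the one-dimensional estimate on each ray. You decompose the slice $A(\omega)$ into its connected components $(a_c,b_c)$ and apply Lemma~\ref{Hopital} (resp.~\ref{Hopital^c}) to each component, then sum. The paper avoids components altogether: it introduces the Euclidean inner boundary layer $A'_\kappa=\{x\in A: d(x,A^c)<\kappa/(C_1 m(x))\}\subset A_\kappa$, sets $\tau_\omega=\sup(A\setminus A'_\kappa)(\omega)$, and observes that $(A\setminus A'_\kappa)(\omega)\subset[0,\tau_\omega)$ while $(\tau_\omega,\tau_\omega+a\,h_\omega(\tau_\omega))\subset A'_\kappa(\omega)$. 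A single application of Lemma~\ref{Hopital} at this one pivot, combined with the elementary monotonicity of $x\mapsto x/(\delta+x)$, then gives $\mu_\varphi^\omega(A'_\kappa(\omega))/\mu_\varphi^\omega(A(\omega))\geq Ca$ directly. This sidesteps your edge case of short components and the bookkeeping of choosing a $\tau$ in each component with $\tau+a\,h(\tau)\approx b_c$. Your route is more pedestrian but equally valid; the paper's single-pivot trick is the cleaner execution.
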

\begin{proof}
Both spaces are locally doubling by Proposition \ref{p: ld}. It easily
follows from the assumptions on $\varphi$ that
$\mu_\varphi(\BR^d)=\infty$ and $\mu_{-\varphi}(\BR^d)<\infty$. To prove
that $(\BR^d, \rho_\varphi,\mu_\varphi)$ satisfies also property \PP\ 
we must prove that there exists a constant $C$ such that for every
bounded open set $A$ and every $\kappa$ in $[0,1)$
$$
\mu_\varphi(A_\kappa)\,\ge\,C\,\mu_\varphi(A),
$$
where we recall that $A_\kappa=\set{x\in A:\rho(x,A^c)<\kappa}$. \par
Henceforth we shall write $m=1+\mod{\nabla\varphi}$, for the sake of 
brevity.  Since $m$ is tame there exists a constant $C_1\ge1$ such that
$$
C_1^{-1}\le\, \frac{m(x)}{m(y)}\,\le\, C_1 \qquad\forall x,y \ \,{\rm such\ that\ 
\,} \mod{x-y}<1.
$$
Let $d$ denote the Euclidean distance in $\BR^d$ and set
$$
A'_\kappa=\set{x\in A: d(x,A^c)<\frac{\kappa}{C_1\,m(x)}}.
$$
We observe that if $x\in A'_\kappa$, then there exists $y$ in $A^c$
such that
$$
\mod{x-y} < \frac{\kappa}{C_1\,m(x)}\,\le\, 1
$$
Thus, by (\ref{eq}), we get that $\rho(x,y)<\,C_1\,m(x)
\,\mod{x-y}<\kappa$. Hence $A'_\kappa\subset A_\kappa$ and it
suffices  to prove that there exists a constant $C$ such that
$$
\mu_\varphi(A'_\kappa)\,\ge\,C\,\mu_\varphi(A).
$$
\par
For every $\om$ in the unit sphere $S^{d-1}$ let $\mu_\varphi^\om$ 
denote the measure on $\BR_+$ defined by
$$
\mu_\varphi^\om\big(E\big)=\int_E \e^{\varphi(r\om)}\,r^{d-1} \wrt r
$$
for every measurable subset $E$ of $\BR_+$. \par The functions
$\psi_\om(r)=\varphi(r\om)$ and $h_\om(r)=1/m(r\om)$ satisfy the
assumptions of Lemma~\ref{Hopital} uniformly with respect to $\om$
in $S^{d-1}$. Thus for all $a\in [0,1]$ there exists a constant
$C>0$ such that
\begin{equation}\label{muof}
\mu_\varphi^\om\big((\tau,\tau+a \,h_\om(\tau)\big)\ge\,C\,a
 \,\mu_\varphi^\om\big([0,\tau+a \,h_\om(\tau))\big)\qquad\forall \tau\in\BR
+, \quad \forall \om\in S^{d-1}.
\end{equation}
If $F$ is a measurable subset of $\BR^d$ let $F(\om)$ denote the set $
\set{r\in\BR_+: r\om\in F}$.  \par
If the set $(A\setminus A'_\kappa)(\om)$ is empty then obviously
$$
\mu_\varphi^\om\big(A'_\kappa(\om)\big)=\mu_\varphi^\om\big(A(\om)
\big).
$$
Otherwise, set $\tau_\om=\sup(A\setminus A'_\kappa)(\om)$. 
Observe that $\tau_\omega\omega\in A\setminus A'_\kappa$. 
Indeed, by the definition of $\tau_\omega$,  there exists a sequence 
$s_n\to \tau_\omega$ such that $s_n\omega\in A\setminus A'_\kappa$. 
By the continuity of $m$
$$
d(\tau_\omega\omega, A^c)=\lim_nd(s_n\omega, A^c)\ge\lim_n 
\frac{\kappa}{C_1\,m(s_n\omega)} =\frac{\kappa}{C_1\,m(\tau_\omega\omega)}>0.
$$
This implies that $\tau_\omega\omega\in A\setminus A'_\kappa$.\par 
The set 
$(A\setminus A'_\kappa)(\om)$ is obviously contained
 in the interval $[0,\tau_\om)$.  We claim that  the set $A'_\kappa(\om)$ contains the 
interval $\big(\tau_\om,\tau_\om+C_1^{-1}\,\kappa\,h_\om(\tau_\om)\big)$.
 Indeed, if $s\in\big(\tau_\om,\tau_\om+C_1^{-1}\,\kappa\,h_\om(\tau_\om)\big)$, 
 then $d(\tau_\omega\omega,s\omega)<\kappa/\big(C_1\,m(\tau_\omega\omega)\big)$. 
 Hence $s\omega\in A$, because otherwise $\tau_\omega\omega$ would be in $A'_\kappa$. 
 Since $s\omega\notin A\setminus A'_\kappa$ by the definition of $\tau_\omega$, 
 the claim is proved.\par Then, writing $a=C_1^{-1}\,\kappa$ for the sake 
 of brevity, using the fact 
that
  for every positive number $\delta$ the function $x\mapsto x/(\delta+x)$ is 
increasing and (\ref{muof}), we see that
\begin{align*}
\frac{\mu_\varphi^\om\big(A'_\kappa(\om)\big)}{\mu_\varphi^\om\big(A
(\om)\big)}&=\frac{\mu_\varphi^\om\big(A'_\kappa(\om)\big)}{\mu_\varphi^
\om\big((A\setminus A'_\kappa)(\om)\big)+\mu_\varphi^\om\big(A'_\kappa
(\om)\big)} \\
&\ge\,\frac{\mu_\varphi^\om\big((\tau_\om,\tau_\om+a\,h_\om(\tau_\om)
\big)}{\mu_\varphi^\om\big([0,\tau_\om)\big)+\mu_\varphi^\om\big((\tau_
\om,\tau_\om+a\,h_\om(\tau_\om)\big)} \\
&=\,\frac{\mu_\varphi^\om\big((\tau_\om,\tau_\om+a\,h_\om(\tau_\om)
\big)}{\mu_\varphi^\om\big([0,\tau_\om+a\,h_\om(\tau_\om)\big)}\\
&\ge\, C\,a=CC_1^{-1}\kappa.
\end{align*}
Thus, integrating in polar coordinates, one has
\begin{align*}
\mu_\varphi(A'_\kappa)& =\int_{S^{d-1}} \mu_\varphi^\om\big(A'_\kappa
(\om)\big)\wrt\sigma(\om)
\\ &\ge \,C\,\kappa\,\int_{S^{d-1}} \mu_\varphi^\om\big(A(\om)\big)\wrt
\sigma(\om)\\
&=\,C\,\kappa\,\mu_\varphi(A).
\end{align*}
This concludes the proof of property \PP\ for $(\BR^d,
\rho_\varphi,\mu_{\varphi})$.\par

The proof of property {\cIP}\ for
$(\BR^d,
\rho_{-\varphi},\mu_{-\varphi})$ is similar. The main
differences are the following
\begin{itemize}
\item[\rmi] the set $A$ is a open set contained in the complement 
of $\set{x\in\BR^d: \mod{x}\ge\,T}$ for some $T>0$ which depends only 
on $\varphi$;
\item[\rmii] the definition of $\tau_\om$ now is $\inf{(A\setminus A'_\kappa)
(\om)}$;
\item[\rmiii]  the set $(A\setminus A'_\kappa)(\om)$ is contained in the 
interval $(\tau_\om,\infty)$ and the set $A'_\kappa(\om)$ contains the 
interval $\big(\tau_\om-C_1^{-1}\,\kappa\,h_\om(\tau_\om), \tau_\om,\big)
$;
\item[\rmiv] the use of Lemma \ref{Hopital^c} instead of Lemma \ref
{Hopital}.
\end{itemize}
We omit the details.
\end{proof}
\begin{remark}\label{}
We point out that the $H^1-BMO$ theory for the Gaussian space $(\BR^d,\gamma)$ developed in \cite{MM} is a particular case of the theory exposed in the present paper. Indeed, $\gamma=\mu_{-\varphi}$ with $\varphi(x)=\mod{x}^2$. Moreover, in \cite{MM} the family of admissible balls is the set  $\cB^\gamma_1$ of all Euclidean balls $B$ in $\BR^d$
such that 
$
r_B \leq \min\bigl(1,1/\mod{c_B}\bigr),
$
where $c_B$ and $r_B$ denote the centre and the radius of $B$
respectively, while the family $\cB_1$ of admissible balls in $(\BR^d,\rho_\varphi,\gamma)$ is the set of all balls of radius at most one, with respect to the metric 
$\wrt s^2=(1+\mod{x})^2(\wrt x_1^2+\cdots+\wrt x_d^2)$.
By Proposition \ref{equivalence}, every ball in $\cB^\gamma_1$ is contained in a ball in $\cB_1$ of comparable measure and viceversa. Thus the spaces $H^1(\gamma)$ and $BMO(\gamma)$ defined in \cite{MM} coincide with those defined in the present paper.
\end{remark}

\end{document}